 \newtheorem{theorem}{Theorem}[section]
 \newtheorem{lemma}[theorem]{Lemma}
 \newtheorem{proposition}[theorem]{Proposition}
 \newtheorem{corollary}[theorem]{Corollary}
 \theoremstyle{remark}
 \newtheorem{remark}[theorem]{\bf Remark}
 \renewcommand*{\backref}[1]{}\renewcommand*{\backrefalt}[4]{\ifcase #1 (\tt not cited)\or (\tt cited on page~#2)\else (\tt cited on pages~#2)\fi}
 \def\NN{\mathds{N}}
 \def\QQ{\mathbb{Q}}
 \def\ZZ{\mathbb{Z}}
 \def\kk{\mathds{k}}
 \def\po{p_1}
 \def\pt{p_2}
 \def\cl{\mathbf{C}l}
 \def\NN{\mathbb{N}}
 \def\QQ{\mathbb{Q}}
 \def\ZZ{\mathbb{Z}}
 \def\kk{\mathds{k}}
 \def\KK{\mathbb{K}}
 \def\LL{\mathbb{L}}
\begin{document}
 	\selectlanguage{english}
 	\title[On the unit group   of some multiquadratic   fields]{On the unit group   of some multiquadratic   fields}

 	\author[M. M. Chems-Eddin]{Mohamed Mahmoud CHEMS-EDDIN}
 	\address{Mohamed Mahmoud CHEMS-EDDIN: Department of Mathematics, Faculty of Sciences Dhar El Mahraz, Sidi Mohamed Ben Abdellah University, Fez,  Morocco}
 	\email{2m.chemseddin@gmail.com}
 	
 	\author[H. El Mamry]{Hamza El Mamry}
 	\address{Hamza EL MAMRY: Departement of Mathematics, Faculty of Sciences Dhar El Mahraz,  Sidi Mohamed Ben Abdellah University, Fez, Morocco}
 	\email{Hamza.elmamry@usmba.ac.ma}

 	\subjclass[2010]{11R04, 11R27, 11R29, 11R37.}
 	\keywords{Real multiquadratic   fields,  unit group, $2$-class group, $2$-class  number.}
 	
 	\maketitle
 	\begin{abstract} 
 		In this paper, we calculate the unit groups  and the $2$-class numbers of the   fields $ \KK= \mathbb{Q}(\sqrt{2}, \sqrt{\po}, \sqrt{\pt})$ and   $ \LL= \mathbb{Q}( \sqrt{-1},\sqrt{2}, \sqrt{\po}, \sqrt{\pt})$, where $\po$ and $\pt$ are two prime numbers satisfying $\po\equiv  \pt \equiv 1 \pmod{4}$.
 	\end{abstract}
 	
 	\section*{Introduction}

 	Let  $k$ be a  number field of degree $n$  (i.e., $[k: \QQ] = n$).  Denote by $E_k$  the unit group of $k$ that is  the group of the invertible elements of the ring $\mathcal{O}_k$ of algebraic integers of the number field  $k$.   By the well known Dirichlet's unit theorem, if $n=r+2s$, where $r$ is the number of real embeddings and $s$ the number of conjugate pairs of complex embeddings of $k$, then there exist $t=r+s-1$ units of $\mathcal{O}_k$ that generate $E_k$ (modulo the roots of unity), and these $t$ units are called the {\it  fundamental system of units of $k$}. Therefore, it is well known that
 	$$E_k\simeq \mu(k)\times \mathbb{Z}^{r+s-1},$$
 	where   $\mu(k)$  is the group of roots of unity contained in $k$.
 	
 	One major  problem in algebraic number theory (more precisely, in the theory of units of number fields which is related to almost all areas of algebraic number theory) is the computation of the
 	fundamental system of units. For  quadratic fields, the problem
 	is easily solved. An early study of unit groups of multiquadratic fields was established by Varmon \cite{Varmon}.    For quartic bicyclic fields, Kubota \cite{Ku-56} gave
 	a method for finding a fundamental system of units.
 	Wada \cite{wada} generalized Kubota's method, creating an algorithm for computing
 	fundamental units in any given multiquadratic field. However, in general, it is not easy to  compute the unit group of a number field  especially for number fields of degree more than $4$. Many authors tried  to characterize their Hasse unit index. For instance, Hirabayashi      and  Yoshino \cite{HY,HY2} gave criteria to determine the Hasse unit index of some multiquadratic number fields of degree $4$ and $8$. 
 	In \cite{H4,H3},   Hirabayashi   gave a necessary and sufficient condition for  a multiquadratic  field of degree 16 to have a unit index equal  to $2$.  	For more works in the same direction, we refer the reader to \cite{azizunints99,azizitalbi,benjlemschi,Bulant}.
 	
 	Actually, in  literature  there are only few examples of the explicit computation of the unit group of a given number field $k$ of degree $\geq 8$ (cf.   \cite{azizireg,ChemsUnits9,CAZ,chemszekhniniaziziUnits1}).

 	Recently, the  authors of \cite{Chemsarith,CEH2024}  computed the unit group of the fields $\QQ(\sqrt 2, \sqrt{p}, \sqrt{q} )$, where $p\equiv 1$, $3  \pmod{4}$ and $q\equiv3\pmod 4$ are two prime  numbers.

 	As  a continuation, in present work,   we focus on the computation of the unit group of the real triquadratic fields of the form 
 	$\KK=\QQ(\sqrt 2, \sqrt{p_1}, \sqrt{p_2} )$, where $p_1$ and $p_2$ are two primes such that  $p_1\equiv p_2\equiv 1 \pmod{4}$. We note that this case has particular interest. In fact, according to \cite[Proposition 1]{H4} the Hasse unit index  of the field $ \LL= \mathbb{Q}( \sqrt{-1},\sqrt{2}, \sqrt{\po}, \sqrt{\pt})$ equals $1$ (i.e. $(E_{\LL}:\mu(\LL)E_{\KK})=1$), so 
 	\begin{eqnarray}\label{frem}
 		E_{\LL}= \langle \zeta_8\rangle E_{\KK}
 	\end{eqnarray}
 	So our Theorems \ref{MT1A}, \ref{MT3}  and  \ref{MT4} (cf. pages \pageref{MT1A},  \pageref{MT3} and  \pageref{MT4} respectively) give  also the unit group of $\LL$.
 	Notice that the motivation behind the computation of the unit groups of these fields is the fact that $\KK$ (resp. $\LL$) is the first layer of the cyclotomic  $\ZZ_2$-extension of the biquadratic field $\QQ(\sqrt{p_1}, \sqrt{p_2} )$ (resp. $\QQ(\sqrt{p_1}, \sqrt{p_2},  \sqrt{-1})$),
 	 see \cite{ChemsUnits9,CAZ} for examples of applications in this direction. Furthermore, computing the unit group of the fields  $\KK$  is also the first step to find the unit group of all fields of the form $\KK(\sqrt{-\ell})$, where $\ell\textgreater 1$ is a positive square-free integer (cf. \cite{azizunints99}). We note that the unit group of these fields are useful for the study of the Hilbert $2$-class field tower of the subfields $\KK(\sqrt{-\ell})$ 
 	 (e.g., \cite{azizireg,chemszekhniniaziziUnits1}). 
 	
 	The plan of this paper is the following. In the next section		(i.e.  Section	\ref{sec2prep} (p. \pageref{sec2prep})), we collect some preliminaries and preparations that will be helpful to build our proofs later. In Section \ref{casestable} (p. \pageref{casestable}), we compute the unit group of $\KK$ when $p_1\equiv p_2\equiv 5 \pmod{8}$.
 	Section  \ref{section2} (p. \pageref{section2}) is dedicated to the computation of the unit group of $\KK$, when 
 	$p_1\equiv1 \pmod{8}$,   $p_2 \equiv 1 \pmod{4}$ and $(N(\varepsilon_{p_1p_2})   ,N(\varepsilon_{2p_1p_2}))\not=(-1,-1)$.  
 	In Section \ref{section23} (p. \pageref{section23}), we compute the unit group of $\KK$ for the remaining cases. Finally in Section \ref{lastsec} (p. \pageref{lastsec}), we give  some concluding    results and remarks.
 	
 	Throughout this paper we use  the following notations. Let $k$ be a number field.
 	\begin{enumerate}[$\star$]
 		\item $E_k$: the unit group of $k$,
 		\item $\mathbf{C}l_2(k)$: the $2$-class group of $k$,
 		\item $q(k)=(E_{k}: \prod_{i}E_{k_i})$ is the unit index of $k$, if $k$ is multiquadratic, where   $k_i$ are  the  quadratic subfields	of $k$,
 		\item $h(k)$: the  class number of  $k$,
 		\item $h_2(k)$: the $2$-class number of $k$,
 	\item	$k^{(n)}$: the $n$th Hilbert $2$-class field of $k$,
 		\item $h_2(d)$: the $2$-class number of a quadratic field $\mathbb{Q}(\sqrt{d})$,
 		\item $\varepsilon_d$: the fundamental unit of a real quadratic field $\mathbb{Q}(\sqrt{d})$,
 		\item $N(\varepsilon_d)$: the norm of   $\varepsilon_d$ in the extension $\mathbb{Q}(\sqrt{d})/\mathbb{Q}$,
 		\item $p_1$, $p_2$: two  different prime numbers such that 	$\po\equiv  \pt \equiv 1 \pmod 4$,
 		\item $\tau_i$: defined in Page \pageref{fsu preparations},
 		\item$k_i$: defined in Page \pageref{fsu preparations},
 		\item $\left(\frac{\alpha,d}{\mathfrak{p}}\right)$:  the quadratic norm residue symbol over   $k$,
 		\item $\left(\frac{\cdot}{\cdot}\right)$: The Legendre symbol,
 		\item $\left(\frac{\cdot}{\cdot}\right)_4$: The biquadratic symbol,
 		\item $n_1$ (resp. $n_2$): the norm of the unit $  \varepsilon_{2p_1}$ (resp. $ \varepsilon_{2p_2}$),
 		\item$n_3$ (resp. $n_4$): the norm of the unit $  \varepsilon_{p_1p_2}$ (resp. $\varepsilon_{2p_1p_2}$).
 	\end{enumerate}
 	\section{\bf First Preliminaries } \label{sec2prep}	
 	Let us start this section by  recalling the method given in    \cite{wada}, that describes a fundamental system  of units of a real  multiquadratic field $K_0$. Let  $\tau_1$ and 
 	$\tau_2$ be two distinct elements of order $2$ of the Galois group of $K_0/\mathbb{Q}$. Let $K_1$, $K_2$ and $K_3$ be the three subextensions of $K_0$ invariant by  $\tau_1$,
 	$\tau_2$ and $\tau_1\tau_2$, respectively. Let $\varepsilon$ denote a unit of $K_0$. Then \label{algo wada}
 	$$\varepsilon^2=\varepsilon\varepsilon^{\tau_1}  \varepsilon\varepsilon^{\tau_2}(\varepsilon^{\tau_1}\varepsilon^{\tau_2})^{-1},$$
 	and we have, $\varepsilon\varepsilon^{\tau_1}\in E_{K_1}$, $\varepsilon\varepsilon^{\tau_2}\in E_{K_2}$  and $\varepsilon^{\tau_1}\varepsilon^{\tau_2}\in E_{K_3}$.
 	It follows that the unit group of $K_0$  
 	is generated by the elements of  $E_{K_1}$, $E_{K_2}$ and $E_{K_3}$, and the square roots of elements of   $E_{K_1}E_{K_2}E_{K_3}$ which are perfect squares in $K_0$.
 	
 	This method is very useful for computing a fundamental system of units of a real biquadratic number field, however, in the case of a real triquadratic 
 	number field the problem of the determination of the unit group becomes very difficult and demands some specific computations and eliminations, as we will see in the next sections.
 	We shall consider the field $\KK=\mathbb{Q}(\sqrt{2},\sqrt{\po},\sqrt{\pt})$, where $\po$ and $\pt$ are two distinct prime numbers. Thus, we have the following diagram:

 	\begin{figure}[H]
 		$$\xymatrix@R=0.8cm@C=0.3cm{
 			&\KK=\QQ( \sqrt 2, \sqrt{\po}, \sqrt{\pt})\ar@{<-}[d] \ar@{<-}[dr] \ar@{<-}[ld] \\
 			k_1=\QQ(\sqrt 2,\sqrt{\po})\ar@{<-}[dr]& k_2=\QQ(\sqrt 2, \sqrt{\pt}) \ar@{<-}[d]& k_3=\QQ(\sqrt 2, \sqrt{\po\pt})\ar@{<-}[ld]\\
 			&\QQ(\sqrt 2)}$$
 		\caption{Intermediate fields of $\KK/\QQ(\sqrt 2)$}\label{fig:I}
 	\end{figure}
 	
 	We  consider also the following fields   : $k_4=\QQ(\sqrt{\po},\sqrt{\pt}), k_5=\QQ(\sqrt{\pt},\sqrt{2\po})$,  $k_6=\QQ(\sqrt{\po},\sqrt{2\pt})$ and $k_7=\QQ(\sqrt{2\po}, \sqrt{2\pt})$.
 	\begin{remark}\label{unramified}
 		
 		We note that $\KK$ is unramified over   its subfields appearing in the below diagram:
 		\begin{center}
 			\begin{figure}[H]\label{tours}
 				
 				{
 					\begin{tikzpicture} [scale=1.2]
 						\node (k3)  at (-3, 0) {$k_3 $};
 						
 						\node (k6)  at (-1,  0) {$k_5 $};
 						\node (k5)  at (1,  0) {$k_6 $};
 						\node (k7)  at (3,  0) {$k_7 $};
 						\node (KK)  at (0,  1.5) {$\KK$};
 						\draw (k3) --(KK)  node[scale=0.4,  midway,  below right]{};
 						\draw (k5) --(KK)  node[scale=0.4,  midway,  below right]{};
 						\draw (k6) --(KK)  node[scale=0.4,  midway,  below right]{\Large };
 						\draw (k7) --(KK)  node[scale=0.4,  midway,  below right]{\Large };
 						
 				\end{tikzpicture}}
 				\caption{Unramified subfields of $\KK$.}\label{Fig 3}
 			\end{figure}
 		\end{center} 
 	\end{remark} 
 	Let $\tau_1$, $\tau_2$ and $\tau_3$ be the elements of  $ \mathrm{Gal}(\KK/\QQ)$ defined by
 	\begin{center}	\begin{tabular}{l l l }
 			$\tau_1(\sqrt{2})=-\sqrt{2}$, \qquad & $\tau_1(\sqrt{\po})=\sqrt{\po}$, \qquad & $\tau_1(\sqrt{\pt})=\sqrt{\pt},$\\
 			$\tau_2(\sqrt{2})=\sqrt{2}$, \qquad & $\tau_2(\sqrt{\po})=-\sqrt{\po}$, \qquad &  $\tau_2(\sqrt{\pt})=\sqrt{\pt},$\\
 			$\tau_3(\sqrt{2})=\sqrt{2}$, \qquad &$\tau_3(\sqrt{\po})=\sqrt{\po}$, \qquad & $\tau_3(\sqrt{\pt})=-\sqrt{\pt}.$
 		\end{tabular}
 	\end{center}
 	Note that  $\mathrm{Gal}(\KK/\QQ)=\langle \tau_1, \tau_2, \tau_3\rangle$
 	and the subfields  $k_1$, $k_2$ and $k_3$ are
 	fixed by  $\langle \tau_3\rangle$, $\langle\tau_2\rangle$ and $\langle\tau_2\tau_3\rangle$ respectively. Therefore,\label{fsu preparations} a fundamental system of units  of $\KK$ consists  of seven  units chosen from those of $k_1$, $k_2$ and $k_3$, and  from the square roots of the elements of $E_{k_1}E_{k_2}E_{k_3}$ which are squares in $\KK$.   
 	With these   notations, we have:
 	\begin{lemma}[\cite{azizitalbi}, Theorem 3]\label{fork1k2}
 		Let $p\equiv 1 \pmod 4$ be a prime number. We have:
 		\begin{enumerate}[$1)$]
 			\item If $N(\varepsilon_{2p})=-1$ then $\{\varepsilon_{2},\varepsilon_{p},\sqrt{\varepsilon_{2}\varepsilon_{p}\varepsilon_{2p}}\}$ is a F.S.U of $\QQ(\sqrt{2},\sqrt{p})$. 
 			\item If $N(\varepsilon_{2p})=1$ then $\{\varepsilon_{2},\varepsilon_{p},\sqrt{\varepsilon_{2p}}\}$ is a F.S.U of $\QQ(\sqrt{2},\sqrt{p})$.
 		\end{enumerate}
 		
 	\end{lemma}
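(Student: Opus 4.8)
The plan is to apply the algorithm of Wada recalled above to $K_0=\QQ(\sqrt{2},\sqrt{p})$, whose three quadratic subfields are $\QQ(\sqrt{2})$, $\QQ(\sqrt{p})$ and $\QQ(\sqrt{2p})$ with fundamental units $\varepsilon_2$, $\varepsilon_p$ and $\varepsilon_{2p}$. Since $K_0$ is totally real of degree $4$ its unit rank is $3$, so these three units are already multiplicatively independent and the whole question reduces to deciding which of the finitely many products $\varepsilon_2^{a}\varepsilon_p^{b}\varepsilon_{2p}^{c}$ with $a,b,c\in\{0,1\}$ admits a square root in $K_0$. First I would record the norms $N(\varepsilon_2)=-1$ (since $\varepsilon_2=1+\sqrt{2}$) and $N(\varepsilon_p)=-1$ (since $p\equiv1\pmod4$ is prime), leaving $N(\varepsilon_{2p})=\pm1$ as the dichotomy of the statement.

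Next I would carry out a sign analysis over the four real embeddings of $K_0$, indexed by the signs of $(\sqrt{2},\sqrt{p})$. Being totally positive is necessary for an element to be a square in the totally real field $K_0$; computing the sign patterns of $\varepsilon_2$, $\varepsilon_p$, $\varepsilon_{2p}$ from their norms shows that the number of totally positive products among the candidates is tightly controlled: when $N(\varepsilon_{2p})=1$ the unit $\varepsilon_{2p}$ is itself totally positive while every other nontrivial product fails to be, whereas when $N(\varepsilon_{2p})=-1$ it is exactly the triple product $\varepsilon_2\varepsilon_p\varepsilon_{2p}$ that is totally positive. This isolates a unique candidate in each case and eliminates all others. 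For the case $N(\varepsilon_{2p})=1$ the argument then closes by an explicit computation: writing $\varepsilon_{2p}=a+b\sqrt{2p}$ with $a^{2}-2pb^{2}=1$, the factorisation $(a-1)(a+1)=2p\,b^{2}$ together with $\gcd(a-1,a+1)\mid2$ and the fact that $\varepsilon_{2p}$ is fundamental forces one of $2\varepsilon_{2p}$ or $p\varepsilon_{2p}$ to be a perfect square in $\QQ(\sqrt{2p})$; since $K_0=\QQ(\sqrt{2p})(\sqrt{2})=\QQ(\sqrt{2p})(\sqrt{p})$, dividing the corresponding square root by $\sqrt{2}$ or $\sqrt{p}$ puts $\sqrt{\varepsilon_{2p}}$ in $K_0$ and yields the F.S.U $\{\varepsilon_2,\varepsilon_p,\sqrt{\varepsilon_{2p}}\}$.

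The hard part will be the case $N(\varepsilon_{2p})=-1$, where total positivity of $\varepsilon_2\varepsilon_p\varepsilon_{2p}$ is only a necessary condition and one must genuinely exhibit a square root. Here the natural supporting data are the relative norms $N_{K_0/\QQ(\sqrt{d})}(\varepsilon_2\varepsilon_p\varepsilon_{2p})=\varepsilon_{d}^{\,2}$ for $d\in\{2,p,2p\}$, which are squares in the respective subfields and show that the square class of $\varepsilon_2\varepsilon_p\varepsilon_{2p}$ is fixed by $\mathrm{Gal}(K_0/\QQ)$; the remaining obstruction is to promote this Galois invariance and total positivity to an actual square in $K_0$. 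I would settle this either by invoking Kubota's structural description of the unit group of a real biquadratic field (which, when all three subfield units have norm $-1$, gives unit index $2$ with adjoined generator $\sqrt{\varepsilon_2\varepsilon_p\varepsilon_{2p}}$), or by a direct $2$-descent: assume $[K_0(\sqrt{\varepsilon_2\varepsilon_p\varepsilon_{2p}}):K_0]=2$, use the relation $\sigma(\sqrt{\varepsilon_2\varepsilon_p\varepsilon_{2p}})=\pm\sqrt{\varepsilon_2\varepsilon_p\varepsilon_{2p}}\,(\varepsilon_2\varepsilon_p)^{-1}$ coming from the norm computation to pin down the Galois structure of the resulting octic field, and derive a contradiction. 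Once $\sqrt{\varepsilon_2\varepsilon_p\varepsilon_{2p}}\in K_0$ is established, $\{\varepsilon_2,\varepsilon_p,\sqrt{\varepsilon_2\varepsilon_p\varepsilon_{2p}}\}$ is a F.S.U and the unit index equals $2$ in both cases.
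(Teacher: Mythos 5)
The paper itself offers no proof of this lemma: it is imported wholesale from Azizi--Talbi (\cite{azizitalbi}, Theorem 3), so the only meaningful review is of your argument on its own merits. Your treatment of the case $N(\varepsilon_{2p})=1$ is essentially sound: the sign analysis correctly shows that $\varepsilon_{2p}$ is the unique totally positive nontrivial product, and the factorisation $(a-1)(a+1)=2pb^{2}$ with $\gcd(a+1,a-1)=2$, together with the primality of $p$ (which is what limits the factorisation to four cases) and the fundamentality of $\varepsilon_{2p}$ (which kills the two bad cases), does put $\sqrt{\varepsilon_{2p}}$ in $\QQ(\sqrt2,\sqrt p)$. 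One small slip: the surviving cases make $2\varepsilon_{2p}$ or $2p\,\varepsilon_{2p}$ (not $p\,\varepsilon_{2p}$) a square in $\QQ(\sqrt{2p})$; the conclusion is unaffected.

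The case $N(\varepsilon_{2p})=-1$ has a genuine gap, and both of your escape routes fail for the same reason. The statement you attribute to Kubota --- that a real biquadratic field whose three quadratic subfields all have fundamental units of norm $-1$ automatically has unit index $2$ with adjoined generator $\sqrt{\varepsilon_1\varepsilon_2\varepsilon_3}$ --- is false as a general structural fact. This very paper supplies counterexamples: for $k_3=\QQ(\sqrt2,\sqrt{p_1p_2})$ with $N(\varepsilon_{p_1p_2})=N(\varepsilon_{2p_1p_2})=-1$, Lemma \ref{lmunit}~(1) explicitly leaves both possibilities open ($\varepsilon_{2}\varepsilon_{p_1p_2}\varepsilon_{2p_1p_2}$ a square in $k_3$ or not), and Theorem \ref{MT1A} (items 2 and 3) together with the numerical table in Section \ref{lastsec} shows that both alternatives genuinely occur. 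For the same reason your proposed ``direct $2$-descent'' cannot close the argument: total positivity of $\varepsilon_{2}\varepsilon_{p}\varepsilon_{2p}$ and the Galois invariance of its square class (the relative norms being $\varepsilon_d^{\,2}$) hold word for word in those composite-discriminant counterexamples where the element is \emph{not} a square, so the octic field $K_0\bigl(\sqrt{\varepsilon_2\varepsilon_p\varepsilon_{2p}}\bigr)$ can perfectly well exist with a consistent Galois action, and no contradiction is derivable from that structure alone. What is missing is the arithmetic input that uses the primality of $p$: the standard proof writes $p=\pi\bar\pi$ in $\ZZ[i]$, exploits $x^{2}+1=2py^{2}$ (from $N(\varepsilon_{2p})=-1$) and $N(\varepsilon_p)=-1$ to obtain explicit expressions for $\sqrt{\varepsilon_2}$, $\sqrt{2\varepsilon_p}$ and $\sqrt{\varepsilon_{2p}}$ as combinations of square roots of Gaussian primes, and then checks that the product of the three is a sum of pairwise conjugate complex numbers, hence lies in $\QQ(\sqrt2,\sqrt p)$ --- exactly the technique used in the paper's proof of Lemma \ref{squaretest}. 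If instead you intend to invoke Kubota's (or Azizi--Talbi's) computation of this \emph{specific} family, that is not a proof but a citation of the statement itself, which is precisely what the paper does.
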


 	\begin{lemma}[\cite{AzZektaous}, Theorem 2.2]\label{lmunit}
 		Let  $p_1\equiv p_2\equiv 1 \pmod 4$ be two prime numbers and $k_3=\QQ(\sqrt{2},\sqrt{\po\pt})$. Let  $\varepsilon_{2\po\pt}=x+y\sqrt{2\po\pt}$, where $x$ and $y$ are two integers. We have 
 		\begin{enumerate}[$1)$]
 			\item If $N(\varepsilon_{\po\pt})= N(\varepsilon_{2\po\pt})=-1$, then    
 			a F.S.U of $k_3$ is given by 
 			$$\{\varepsilon_{2}, \varepsilon_{\po\pt}, \sqrt{\varepsilon_{2}\varepsilon_{\po\pt}\varepsilon_{2\po\pt}}\} \text{ or }\{\varepsilon_{2}, \varepsilon_{\po\pt},  \varepsilon_{2\po\pt}\},$$ according to whether $\varepsilon_{2}\varepsilon_{\po\pt}\varepsilon_{2\po\pt}$ is a square in $k_3$ or not.

 			\item If $N(\varepsilon_{\po\pt})=-N(\varepsilon_{2\po\pt})=1$ then a F.S.U of $k_3$ is $\{\varepsilon_{2},\varepsilon_{\po\pt}, \varepsilon_{2\po\pt} \}$.
 			\item If $N(\varepsilon_{2\po\pt})=-N(\varepsilon_{\po\pt})=1$ then a F.S.U of $k_3$ is 
 			$$\{\varepsilon_{2}, \varepsilon_{\po\pt}, \sqrt{\varepsilon_{2\po\pt}}\}\text{ or }\{\varepsilon_{2}, \varepsilon_{\po\pt},  \varepsilon_{2\po\pt}\},$$ 
 			according to whether $x\pm1$ 
 			is a square in $\NN$ or not.

 			\item If $N(\varepsilon_{2\po\pt})=N(\varepsilon_{\po\pt})=1$ then a F.S.U of $k_3$ is 
 			$$\{\varepsilon_{2}, \varepsilon_{\po\pt}, \sqrt{\varepsilon_{2\po\pt}}\}\text{ or }\{\varepsilon_{2}, \varepsilon_{\po\pt},  \sqrt{\varepsilon_{\po\pt}\varepsilon_{2\po\pt}}\},$$ according to whether $x\pm1$ is a square in $\NN$ or not.
 		\end{enumerate}
 	\end{lemma}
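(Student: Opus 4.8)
The plan is to apply Wada's method (recalled on page~\pageref{algo wada}) to the biquadratic field $k_3=\QQ(\sqrt2,\sqrt{\po\pt})$, whose three quadratic subfields are $\QQ(\sqrt2)$, $\QQ(\sqrt{\po\pt})$ and $\QQ(\sqrt{2\po\pt})$, with fundamental units $\varepsilon_2$, $\varepsilon_{\po\pt}$ and $\varepsilon_{2\po\pt}$. By that method $E_{k_3}$ is generated by these three units together with the square roots of those among the seven nontrivial products $\varepsilon_2^{a}\varepsilon_{\po\pt}^{b}\varepsilon_{2\po\pt}^{c}$, with $(a,b,c)\in\{0,1\}^3\setminus\{(0,0,0)\}$, that happen to be squares in $k_3$. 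Since $k_3$ is totally real of unit rank $3$ and $\{\varepsilon_2,\varepsilon_{\po\pt},\varepsilon_{2\po\pt}\}$ already has rank $3$, at most one extra generator can arise (the unit index being $1$ or $2$), and the whole problem reduces to deciding, in each of the four cases, which of these products becomes a square in $k_3$.

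First I would cut down the list of candidates using signs. The field $k_3$ has four real embeddings, indexed by the signs of $(\sqrt2,\sqrt{\po\pt})$, and a unit can be a square in $k_3$ only if it is totally positive there. Writing the sign patterns over $\mathbb{F}_2$ (with $+\mapsto 0$, $-\mapsto 1$), one has $\varepsilon_2\mapsto(0,0,1,1)$ always, because $N(\varepsilon_2)=-1$, while $\varepsilon_{\po\pt}$ and $\varepsilon_{2\po\pt}$ map to $(0,0,0,0)$ or to $(0,1,0,1)$, resp.\ $(0,1,1,0)$, according as their norm is $+1$ or $-1$. Solving for the totally positive (i.e.\ zero-sum) nontrivial combinations case by case leaves exactly: $\varepsilon_2\varepsilon_{\po\pt}\varepsilon_{2\po\pt}$ in case~$1$; $\varepsilon_{\po\pt}$ in case~$2$; $\varepsilon_{2\po\pt}$ in case~$3$; and the three products $\varepsilon_{\po\pt}$, $\varepsilon_{2\po\pt}$, $\varepsilon_{\po\pt}\varepsilon_{2\po\pt}$ in case~$4$. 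This already matches the shape of the four claimed answers.

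It then remains to decide whether each surviving candidate is a square. Since $2\nmid\po\pt$, the prime $2$ is unramified in $\QQ(\sqrt{\po\pt})$; as $2=(\sqrt2)^2$ is a square in $k_3=\QQ(\sqrt{\po\pt})(\sqrt2)$, the relation $\sqrt{\varepsilon_{\po\pt}}\in k_3$ would force (by Kummer theory over $\QQ(\sqrt{\po\pt})$) that $\varepsilon_{\po\pt}$ is already a square in $\QQ(\sqrt{\po\pt})$, contradicting that it is fundamental; hence $\sqrt{\varepsilon_{\po\pt}}\notin k_3$, which settles case~$2$. For the candidate $\varepsilon_{2\po\pt}=x+y\sqrt{2\po\pt}$ with $N=1$, again $\varepsilon_{2\po\pt}\in(k_3^{*})^2\iff 2\varepsilon_{2\po\pt}\in(k_3^{*})^2$, and from the identity $2\varepsilon_{2\po\pt}=(\sqrt{x+1}+\sqrt{x-1})^2$ this amounts to $\sqrt{x+1},\sqrt{x-1}\in k_3$. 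Using $(x+1)(x-1)=2\po\pt\,y^2$ together with the fact that $\varepsilon_{2\po\pt}$ is fundamental (which discards the degenerate square classes that would place $\sqrt{\varepsilon_{2\po\pt}}$ already inside $\QQ(\sqrt{2\po\pt})$), this reduces exactly to: $x+1$ or $x-1$ is a perfect square in $\NN$. This settles case~$3$; in case~$1$ the single candidate $\varepsilon_2\varepsilon_{\po\pt}\varepsilon_{2\po\pt}$ admits no such arithmetic simplification, so the dichotomy is stated directly in terms of whether it is a square in $k_3$.

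The main obstacle is case~$4$. After discarding $\sqrt{\varepsilon_{\po\pt}}$ as above, at most one of $\sqrt{\varepsilon_{2\po\pt}}$ and $\sqrt{\varepsilon_{\po\pt}\varepsilon_{2\po\pt}}$ can lie in $k_3$, since otherwise their quotient would make $\varepsilon_{\po\pt}$ a square. The delicate point is to prove that exactly one of them does — equivalently that the unit index $q(k_3)$ equals $2$ and never $1$ here — and that the two alternatives are separated by the same condition ``$x\pm1$ is a perfect square''. I would establish this by pushing the square-class computation of the previous step one notch further: the admissible square classes of $x+1$ for a fundamental $\varepsilon_{2\po\pt}$ split into those for which $\sqrt{\varepsilon_{2\po\pt}}\in k_3$ (precisely when $x\pm1$ is a perfect square) and the complementary ones, for which the explicit form of $\sqrt{2\varepsilon_{2\po\pt}}$ forces $\sqrt{\varepsilon_{\po\pt}\varepsilon_{2\po\pt}}\in k_3$ instead; the congruences $\po\equiv\pt\equiv1\pmod4$ are what guarantee that no class is left over, so that a square root always exists. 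Carrying out this complementary computation cleanly, while keeping track of the $2$-adic parity of $x\pm1$, is the part that requires the most care.
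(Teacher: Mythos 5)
First, a point of comparison: the paper does not prove this lemma at all --- it is quoted from \cite{AzZektaous} (Theorem 2.2) --- so there is no internal proof to measure you against; the right benchmark is the Kubota--Wada square-class method used in that source and in the paper's own analogous arguments (Remark~\ref{remarkzekh}, Remark~\ref{twosquare}, Lemma~\ref{squaretest}), and your outline is of exactly that type. Your sign-pattern reduction correctly isolates the totally positive candidates in each of the four cases, and your treatment of cases $1)$--$3)$ is essentially complete: the exclusion of $\sqrt{\varepsilon_{\po\pt}}$ via the square classes $\{1,2\}$ trivialized in $k_3/\QQ(\sqrt{\po\pt})$ together with the unramifiedness of $2$, and the conversion of ``$\sqrt{\varepsilon_{2\po\pt}}\in k_3$'' into ``$x+1$ or $x-1$ is a perfect square'' after discarding the decomposition $x\pm1=2u^2$, $x\mp1=\po\pt v^2$ (which would put $\sqrt{\varepsilon_{2\po\pt}}$ inside $\QQ(\sqrt{2\po\pt})$, contradicting fundamentality), are both valid. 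One slip worth flagging: your opening claim that the index is $1$ or $2$ because the three units ``already have rank $3$'' is not a correct argument --- for a real biquadratic field the unit index can a priori be $4$ or $8$; the bound $q(k_3)\le 2$ only falls out of your sign analysis combined with the exclusion of $\sqrt{\varepsilon_{\po\pt}}$, so the conclusion survives but not for the reason stated.

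The genuine gap is case $4)$, which you yourself leave open, and the plan you sketch for it would not close it: tracking ``the admissible square classes of $x\pm1$'' concerns only $\varepsilon_{2\po\pt}$, whereas the assertion still to be proved --- that $\sqrt{\varepsilon_{\po\pt}\varepsilon_{2\po\pt}}\in k_3$ whenever $x\pm1$ is \emph{not} a perfect square --- involves $\varepsilon_{\po\pt}$ in an essential way. What is missing is the parallel decomposition for $\varepsilon_{\po\pt}=a'+b'\sqrt{\po\pt}$ with $N(\varepsilon_{\po\pt})=1$: writing $(a'+1)(a'-1)=\po\pt b'^2$, the congruences $\po\equiv\pt\equiv1\pmod 4$ force $a'$ odd, the decomposition $a'\pm1=b_1^2$, $a'\mp1=\po\pt b_2^2$ is killed because $2$ is unramified in $\QQ(\sqrt{\po\pt})$, and $a'\pm1=\po b_1^2$, $a'\mp1=\pt b_2^2$ is killed by parity, leaving only $a'\pm1=2\po b_1^2$, $a'\mp1=2\pt b_2^2$, i.e.\ the explicit formula $\sqrt{\varepsilon_{\po\pt}}=b_1\sqrt{\po}+b_2\sqrt{\pt}$. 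Only by multiplying this against the explicit form of $\sqrt{\varepsilon_{2\po\pt}}$ in the non-square case --- namely $y_1\sqrt{\pt}+\tfrac{y_2}{2}\sqrt{2\po}\in k_5$ or $y_1\sqrt{\po}+\tfrac{y_2}{2}\sqrt{2\pt}\in k_6$, as in Remark~\ref{twosquare} --- does one see that the product of the two square roots lies in $\QQ(\sqrt2,\sqrt{\po\pt},\sqrt{2\po\pt})=k_3$. That computation is what proves both that the index is $2$ (never $1$) in case $4)$ and that the two alternatives are separated exactly by the stated condition; without it, the last and hardest case of the lemma remains unproven.
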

 	
 	\begin{lemma}[\cite{AzZektaous}, Theorem 2.1]\label{biquadunutk6}
 		Let $\po\equiv\pt \equiv 1 \pmod 4$ be two primes and $k=\QQ(\sqrt{\po},\sqrt{2\pt})$. Put  $\varepsilon_{2\po\pt}=x+y\sqrt{2\po\pt}$, where $x$ and $y$ are two integers.
 		\begin{enumerate}[$1)$]
 			\item Assume that  $N(\varepsilon_{2\pt}) = N(\varepsilon_{2\po\pt}) = -1$. We have:
 			\begin{enumerate}[$a)$]
 				\item If $\varepsilon_{\po}\varepsilon_{2\pt}\varepsilon_{2\po\pt}$ is a square in $k$, then $\{\varepsilon_{\po}, \varepsilon_{2\pt}, \sqrt{\varepsilon_{\po}\varepsilon_{2\pt}\varepsilon_{2\po\pt}}\}$ is a F.S.U of $k$.
 				\item Else $\{\varepsilon_{\po}, \varepsilon_{2\pt}, \varepsilon_{2\po\pt}\}$ is a F.S.U of $k$.
 			\end{enumerate}
 			\item Assume that $N(\varepsilon_{2\pt}) = -N(\varepsilon_{2\po\pt}) = 1$, then  $\{\varepsilon_{\po}, \varepsilon_{2\pt}, \varepsilon_{2\po\pt}\}$ is a F.S.U of $k$.
 			\item Assume that $N(\varepsilon_{2\po\pt}) = -N(\varepsilon_{2\pt}) = 1$. We have:
 			\begin{enumerate}[$a)$]
 				\item If $2p_1(x \pm 1)$ is a square in $\mathbb{N}$, then $\{\varepsilon_{\po}, \varepsilon_{2\pt}, \sqrt{\varepsilon_{2\po\pt}}\}$ is a F.S.U of $k$.
 				\item Else $\{\varepsilon_{\po}, \varepsilon_{2\pt}, \varepsilon_{2\po\pt}\}$ is a F.S.U of $k$.
 			\end{enumerate}
 			\item Assume that $N(\varepsilon_{2\po\pt}) = N(\varepsilon_{2\pt}) = 1$. We have:
 			\begin{enumerate}[$a)$]
 				\item If $2p_1(x \pm 1)$ is a square in $\mathbb{N}$, then $\{\varepsilon_{\po}, \varepsilon_{2\pt}, \sqrt{\varepsilon_{2\po\pt}}\}$ is a F.S.U of $k$.
 				\item Else $\{\varepsilon_{\po}, \varepsilon_{2\pt}, \sqrt{\varepsilon_{2\pt}\varepsilon_{2\po\pt}}\}$ is a F.S.U of $k$.
 			\end{enumerate}
 		\end{enumerate}
 	\end{lemma}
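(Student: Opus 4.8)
The plan is to run the Wada--Kubota descent recalled at the beginning of this section on the real biquadratic field $k=\QQ(\sqrt{p_1},\sqrt{2p_2})$, whose three quadratic subfields are $\QQ(\sqrt{p_1})$, $\QQ(\sqrt{2p_2})$ and $\QQ(\sqrt{2p_1p_2})$, with fundamental units $\varepsilon_{p_1},\varepsilon_{2p_2},\varepsilon_{2p_1p_2}$; note that $N(\varepsilon_{p_1})=-1$ automatically since $p_1\equiv 1\pmod 4$. By Wada's method a fundamental system of units of $k$ is built from $\{\varepsilon_{p_1},\varepsilon_{2p_2},\varepsilon_{2p_1p_2}\}$ together with the square roots of those products $\varepsilon_{p_1}^{a}\varepsilon_{2p_2}^{b}\varepsilon_{2p_1p_2}^{c}$ ($a,b,c\in\{0,1\}$) that are squares in $k$. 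The problem thus reduces to deciding, in each of the four regimes for $(N(\varepsilon_{2p_2}),N(\varepsilon_{2p_1p_2}))$, which of the seven nontrivial products is a square.

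The first reduction is by signs: a square in $k$ is totally positive, so I would tabulate the signs of each product under the four real embeddings of $k$ from the known norms. This singles out $\varepsilon_{p_1}\varepsilon_{2p_2}\varepsilon_{2p_1p_2}$ as the only totally positive candidate in case~$1$ and $\varepsilon_{2p_1p_2}$ in case~$3$; in case~$2$ it leaves only $\varepsilon_{2p_2}$, and in case~$4$ it leaves $\varepsilon_{2p_2}$, $\varepsilon_{2p_1p_2}$ and $\varepsilon_{2p_2}\varepsilon_{2p_1p_2}$.

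The crux, and the main obstacle, is to resolve the surviving square conditions explicitly. For a unit $\varepsilon=x+y\sqrt{d}$ of a quadratic subfield with $N(\varepsilon)=1$ I would use $2\sqrt{\varepsilon}=\sqrt{2(x+1)}+\sqrt{2(x-1)}$ together with $(x+1)(x-1)=dy^2$, so that $\sqrt{2(x+1)}\,\sqrt{2(x-1)}=2|y|\sqrt{d}\in k$; hence $\sqrt{\varepsilon}\in k$ exactly when $\sqrt{2(x+1)}\in k$, i.e.\ when one of $2(x\pm1)$ times $1$, $p_1$, $2p_2$ or $2p_1p_2$ is a perfect square. Applied to $\varepsilon_{2p_1p_2}=x+y\sqrt{2p_1p_2}$ with $N=1$, the possibilities landing $\sqrt{\varepsilon_{2p_1p_2}}$ in $\QQ$ or in $\QQ(\sqrt{2p_1p_2})$ contradict the fundamentality of $\varepsilon_{2p_1p_2}$, so only the condition that $2p_1(x\pm1)$ be a square in $\NN$ survives; it places $\sqrt{2(x+1)}$ in $\QQ(\sqrt{p_1})$ and $\sqrt{2(x-1)}$ in $\QQ(\sqrt{2p_2})$, which is precisely the dichotomy of cases~$3$ and~$4$.

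It remains to discard the spurious candidates in cases~$2$ and~$4$ and to settle case~$1$ directly. For $\varepsilon_{2p_2}$ the analogous computation reduces $\sqrt{\varepsilon_{2p_2}}\in k$ to a square condition that I would rule out (entirely in case~$2$, and against the alternative $\sqrt{\varepsilon_{2p_2}\varepsilon_{2p_1p_2}}$ in case~$4$) by evaluating the relevant quadratic norm residue and biquadratic symbols; these symbol computations are where the real work lies. Case~$1$ is then left as the direct question whether $\varepsilon_{p_1}\varepsilon_{2p_2}\varepsilon_{2p_1p_2}$ is a square in $k$, tested by exhibiting the putative square root and checking membership in $\OO_k$. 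Throughout, Kubota's bound $q(k)\mid 8$ guarantees that producing the correct square root in each case yields a genuine fundamental system rather than a subgroup of finite index.
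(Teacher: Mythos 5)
First, a point of comparison: this lemma is not proved in the paper at all --- it is imported verbatim from the literature (it is Theorem~2.1 of the cited work of Azizi, Zekhnini and Taous), so there is no internal proof to measure your attempt against. Judged on its own merits, your framework is the right one, and it is the same one used both in that source and elsewhere in this paper (compare Remark~\ref{twosquare} and Lemma~\ref{squaretest}): Wada--Kubota descent, elimination of candidate products by total positivity, and the explicit decomposition $2\sqrt{\varepsilon}=\sqrt{2(x+1)}+\sqrt{2(x-1)}$ with $(x+1)(x-1)=dy^2$, which correctly reduces $\sqrt{\varepsilon_{2\po\pt}}\in k$ to the condition that $2\po(x\pm1)$ be a square in $\NN$ (the factorizations landing the root in $\QQ(\sqrt{2\po\pt})$ being excluded by Lemma~\ref{tettt}-type arguments). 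Your sign tables and the resulting lists of surviving candidates in the four cases are correct, so cases $1)$ and $3)$ are essentially complete, and case $2)$ follows from the same decomposition technique you already set up: for $\varepsilon_{2\pt}=x'+y'\sqrt{2\pt}$ with norm $1$, the square class of $2(x'+1)\cdot 2(x'-1)$ is $2\pt$, and since $\po$ cannot divide both $x'+1$ and $x'-1$, the only factorization landing $\sqrt{\varepsilon_{2\pt}}$ in $k$ is $\{1,2\pt\}$, which is impossible; no norm-residue or biquadratic symbols are needed.

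The genuine gap is case $4)b)$. There the lemma makes a \emph{positive} assertion: when $2\po(x\pm1)$ is not a square in $\NN$, the product $\varepsilon_{2\pt}\varepsilon_{2\po\pt}$ \emph{is} a square in $k$, so that $\sqrt{\varepsilon_{2\pt}\varepsilon_{2\po\pt}}$ belongs to the fundamental system. Your plan only describes eliminations (``rule out $\sqrt{\varepsilon_{2\pt}}$ against the alternative''), and eliminations cannot establish existence; nor can the symbol computations you invoke, which obstruct squares rather than certify them. What is needed is the constructive step: since $N(\varepsilon_{2\pt})=1$ forces $\sqrt{\varepsilon_{2\pt}}=u'\sqrt{2}+v'\sqrt{\pt}$, and since $N(\varepsilon_{2\po\pt})=1$ with $2\po(x\pm1)$ not a square forces $\sqrt{\varepsilon_{2\po\pt}}=u\sqrt{2}+v\sqrt{\po\pt}$ or $u\sqrt{\pt}+v\sqrt{2\po}$, one multiplies the two roots and checks that in either case every term of the product ($\sqrt{2\po\pt}$, $\sqrt{2\pt}$, $\sqrt{\po}$, respectively $\sqrt{2\pt}$, $\sqrt{\po}$, $\sqrt{2\po\pt}$, together with rational terms) lies in $k$; only the excluded factorization $\{\po,2\pt\}$ produces terms outside $k$. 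Without this (or an equivalent class-number/unit-index count pinning $q(k)=2$ in case $4)$), your argument proves at most that the fundamental system is \emph{either} the one in $4)a)$ or the trivial one $\{\varepsilon_{\po},\varepsilon_{2\pt},\varepsilon_{2\po\pt}\}$, which is strictly weaker than the statement. A minor additional caveat: Kubota's bound $q(k)\mid 8$ does not by itself guarantee fundamentality; what does is Wada's structural statement that $E_k$ is generated by $E_{k_1}E_{k_2}E_{k_3}$ together with the square roots in $k$ of its elements, once you have determined \emph{exactly} which products are squares.
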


 	Now we recall the following lemmas:

 	\begin{lemma}[\cite{Ku-50}]\label{wada's f.}
 		Let $K$ be a multiquadratic number field of degree $2^n$, $n\in\mathds{N}$,  and $k_i$ the $s=2^n-1$ quadratic subfields of $K$. Then
 		$$h_2(K)=\frac{1}{2^v}q(K)\prod_{i=1}^{s}h_2(k_i),$$
 		with $q(K):=( E_K: \prod_{i=1}^{s}E_{k_i})$ and  $$v=\left\{ \begin{array}{cl}
 			n(2^{n-1}-1), &\text{ if } K \text{ is real, }\\
 			(n-1)(2^{n-2}-1)+2^{n-1}-1, & \text{ if } K \text{ is imaginary.}
 		\end{array}\right.$$
 	\end{lemma}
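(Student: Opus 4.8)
This is Kuroda's class number formula, so I would prove it analytically from the factorisation of the Dedekind zeta function and then isolate the $2$-part. Since $K/\QQ$ is abelian with $\mathrm{Gal}(K/\QQ)\cong(\ZZ/2\ZZ)^n$, the character group $\widehat{G}$ has $2^n$ elements: the trivial character together with $s=2^n-1$ characters of order $2$, which are in bijection with the quadratic subfields $k_i$ via $\zeta_{k_i}(s)=\zeta_\QQ(s)\,L(s,\chi_i)$. Hence $\zeta_K=\prod_{\chi\in\widehat G}L(s,\chi)=\zeta_\QQ^{\,-(2^n-2)}\prod_{i=1}^{2^n-1}\zeta_{k_i}$. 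Taking residues at $s=1$ and using $\mathrm{Res}_{s=1}\zeta_\QQ=1$, the conductor--discriminant formula $|d_K|=\prod_i|d_{k_i}|$ (as $\mathfrak{f}(\chi_0)=1$) makes the discriminant factors cancel, and the analytic class number formula $\mathrm{Res}_{s=1}\zeta_F=2^{r_1(F)}(2\pi)^{r_2(F)}h_FR_F/(w_F\sqrt{|d_F|})$ reduces everything to the single identity $2^{r_1(K)}(2\pi)^{r_2(K)}h_KR_K/w_K=\prod_i 2^{r_1(k_i)}(2\pi)^{r_2(k_i)}h_{k_i}R_{k_i}/w_{k_i}$. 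The powers of $\pi$ cancel because $\sum_i r_2(k_i)=r_2(K)$ in both the real and the CM case, so solving for $h_K$ leaves only powers of $2$ and the regulator ratio.

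The crux is then to convert this into the stated multiplicative formula by computing $R_K/\prod_i R_{k_i}$ in terms of $q(K)$. First one checks the unit ranks agree: if $K$ is real, $\mathrm{rank}\,E_K=2^n-1=\sum_i\mathrm{rank}\,E_{k_i}$ (each of the $2^n-1$ real $k_i$ contributing $1$); if $K$ is imaginary it is a CM field with $2^{n-1}-1$ real and $2^{n-1}$ imaginary quadratic subfields, and again $\mathrm{rank}\,E_K=2^{n-1}-1=\sum_i\mathrm{rank}\,E_{k_i}$. Thus $\prod_i E_{k_i}$ is a full-rank subgroup of $E_K$ of finite index $q(K)$, so the two log-lattices have covolumes differing by the factor $q(K)$; expressing the $K$-logarithmic embedding on a unit lying in a subfield $k_i$ introduces in addition a fixed power of $2$, coming from the splitting of each archimedean place of $k_i$ in $K$ and from the real-versus-complex normalisation. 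Collecting this power of $2$ together with the factors $2^{r_1}$, the torsion terms $w_K/\prod_i w_{k_i}$, and the archimedean place counts is exactly what produces the exponent $v$; the different split of real and imaginary places in the two cases is what yields $v=n(2^{n-1}-1)$ for $K$ real and $v=(n-1)(2^{n-2}-1)+2^{n-1}-1$ for $K$ imaginary. This $2$-adic bookkeeping of regulators, place-splitting and torsion is the main obstacle, while the rest of the argument is formal.

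Finally I would pass from the full class number to its $2$-part. Iterating the Wada relation recalled on page~\pageref{algo wada}, the square of any unit of $K$ lies in a product of unit groups of proper multiquadratic subfields and hence, descending to the quadratic layers, in $\prod_i E_{k_i}$; therefore $E_K^2\subseteq\prod_i E_{k_i}$ and $q(K)=(E_K:\prod_i E_{k_i})$ is a power of $2$. Consequently, in the identity $h_K=2^{-v}q(K)\prod_i h_{k_i}$ just obtained, the factor $2^{-v}q(K)$ is a ratio of powers of $2$, so comparing odd parts gives $h_K^{\mathrm{odd}}=\prod_i h_{k_i}^{\mathrm{odd}}$ and comparing $2$-parts yields precisely $h_2(K)=2^{-v}q(K)\prod_{i=1}^{s}h_2(k_i)$, as claimed. (An alternative, purely algebraic route replaces the analytic input by the ambiguous class number formula applied to the quadratic layers together with an induction on $n$, but the same regulator/unit-index bookkeeping reappears there as the essential step.)
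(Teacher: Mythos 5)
The paper does not actually prove this lemma: it is quoted as a known result from Kuroda \cite{Ku-50} (cf.\ also Wada \cite{wada}), so there is no internal proof to compare yours against. Judged on its own, your outline follows the standard analytic derivation of Kuroda's class number formula, and its architecture is sound: the factorisation $\zeta_K=\zeta_{\QQ}^{-(2^n-2)}\prod_{i}\zeta_{k_i}$ coming from the character group of $(\ZZ/2\ZZ)^n$, the cancellation of discriminants via the conductor--discriminant formula, the rank count showing that $\prod_i E_{k_i}$ has finite index in $E_K$ in both the real and the CM case, the reduction of the residue identity to a regulator/index comparison, and the final passage from $h$ to $h_2$ by splitting off odd parts once $q(K)$ is known to be a power of $2$ are all correct in principle.

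Two caveats. First, the heart of the proof --- verifying that the covolume comparison between the logarithmic unit lattices, combined with the $2^{r_1}(2\pi)^{r_2}$, $w$, and place-splitting factors, produces exactly $2^{-v}q(K)$ with $v=n(2^{n-1}-1)$ in the real case and $v=(n-1)(2^{n-2}-1)+2^{n-1}-1$ in the imaginary case --- is asserted as ``$2$-adic bookkeeping'' rather than carried out; this computation is precisely where Kuroda's work lies, and without it the specific value of $v$ (the only nontrivial content of the statement) remains unverified. Second, your claim that iterating Wada's relation gives $E_K^{2}\subseteq\prod_i E_{k_i}$ is false for $n\ge 3$: the relation on page~\pageref{algo wada} writes $\varepsilon^2$ as a product of units of the three subfields of degree $2^{n-1}$, so iteration only yields $E_K^{2^{n-1}}\subseteq\prod_i E_{k_i}$. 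This weaker inclusion still shows that $E_K/\prod_i E_{k_i}$ is a finite group annihilated by $2^{n-1}$, hence that $q(K)$ is a $2$-power, so your last step survives; but the inclusion as you stated it is incorrect and should be replaced by the $2^{n-1}$-power version.
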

 	\begin{lemma}\label{class numbers of quadratic field}
 		Let $p\equiv p'\equiv 1\pmod 4$ be two prime numbers. 
 		\begin{enumerate}[\rm 1.]
 			\item By \cite[Corollary 18.4]{connor88}, we have  $h_2(p)=h_2(2)=1$.
 			\item If $\genfrac(){}{0}{p}{p'} =-1$,   then   $h_2(p p')=2$, $($cf.   \cite[Corollary 19.8 ,  p. 147]{connor88}$)$.
 			\item If $ p\equiv 5\pmod 8$, then $h_2(2p)=2$ $($cf.   \cite[Corollary 19.8 ,  p. 147]{connor88}$)$.
 			\item  If two of the set $\left\{\genfrac(){}{0}{p}{ p'},\genfrac(){}{0}{2}{p}, \genfrac(){}{0}{2}{ p'}\right\}$ are equal to -1 , then $h_2(2p p')=4$  $($cf.   \cite[Proposition $A_2$,  p. 330]{kaplan76}$)$.
 			\item If $p\equiv 1\pmod 8$ and  $  \genfrac(){}{0}{2}{p}_4\not=  \genfrac(){}{0}{p}{2}_4$, then $h_2(2p )=2$  $($cf.   \cite[Theorem 2 (2)(a)]{kuvcera1995parity}$)$. 
 			\item If  $p\equiv 1\pmod 8$ and  $  \genfrac(){}{0}{2}{p}_4 =  \genfrac(){}{0}{p}{2}_4=-1$, then $h_2(2p )=4$  $($cf.   \cite[Theorem 2 (2)(b)]{kuvcera1995parity}$)$.
 		\end{enumerate}
 	\end{lemma}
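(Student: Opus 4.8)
The six assertions are all classical facts about the $2$-class group of a quadratic field, and the most economical route is to quote the cited sources directly; nonetheless let me indicate the uniform machinery that underlies them, organised by how deep one must descend in the tower of $2$-adic invariants. The common engine is genus theory together with its refinements. If $d$ is a fundamental discriminant divisible by exactly $t$ prime discriminants, then Gauss' genus theory shows that the genus field has degree $2^{t-1}$ over $\mathbb{Q}(\sqrt{d})$, equivalently that the narrow $2$-class group $\mathbf{C}l_2^{+}(\mathbb{Q}(\sqrt{d}))$ has $2$-rank $t-1$. The $4$-rank is then governed by the R\'edei--Reichardt theorem, which expresses it as the corank of an explicit $\mathbb{F}_2$-matrix whose entries are Legendre symbols of the primes dividing $d$. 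Finally, the passage from the narrow to the wide class group costs a factor of $2$ precisely when $N(\varepsilon_d)=+1$, and the sign of this norm is itself encoded by the same symbols. The plan is to treat the items in increasing order of difficulty along this hierarchy.

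For items $1$--$4$ the rank computation already does most of the work. In item $1$ the discriminants $8$ and $p$ (for $p\equiv 1\pmod 4$) are single prime discriminants, so $t=1$, the narrow $2$-rank is $0$, and since $N(\varepsilon_2)=N(\varepsilon_p)=-1$ one gets $h_2(2)=h_2(p)=1$. In items $2$ and $3$ one has $t=2$ and narrow $2$-rank $1$; the hypotheses $\left(\frac{p}{p'}\right)=-1$, respectively $p\equiv 5\pmod 8$ (i.e. $\left(\frac{2}{p}\right)=-1$), render the relevant R\'edei symbol nontrivial so that the $4$-rank is $0$, and the classical criteria of Dirichlet give $N(\varepsilon_{pp'})=-1$, respectively $N(\varepsilon_{2p})=-1$, whence the narrow and wide groups coincide and $h_2=2$. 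Item $4$ is the same computation one level higher: for $d=2pp'$ we have $t=3$ and narrow $2$-rank $2$, and the assumption that two of $\left\{\left(\frac{p}{p'}\right),\left(\frac{2}{p}\right),\left(\frac{2}{p'}\right)\right\}$ equal $-1$ is exactly the nonsingularity of the $2\times 2$ R\'edei matrix, forcing $4$-rank $0$; the wide-group bookkeeping carried out in \cite{kaplan76} then delivers $h_2(2pp')=4$.

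The genuinely harder cases are items $5$ and $6$, and I expect them to be the main obstacle. Here $p\equiv 1\pmod 8$, so $\left(\frac{2}{p}\right)=+1$, the relevant R\'edei entry degenerates, and the $4$-rank of $\mathbf{C}l_2(\mathbb{Q}(\sqrt{2p}))$ is no longer forced to vanish; thus genus theory and the R\'edei--Reichardt $4$-rank can no longer separate $h_2=2$ from $h_2=4$. One must instead descend to the next level and read off the $8$-rank, and the correct finer invariants are the rational biquadratic residue symbols $\left(\frac{2}{p}\right)_4$ and $\left(\frac{p}{2}\right)_4$. The dichotomy is precisely that the $2$-class number fails to climb to the next power of $2$ exactly when these two quartic symbols disagree: if $\left(\frac{2}{p}\right)_4\neq\left(\frac{p}{2}\right)_4$ then $h_2(2p)=2$, whereas if $\left(\frac{2}{p}\right)_4=\left(\frac{p}{2}\right)_4=-1$ then $h_2(2p)=4$. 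Establishing this requires the finer parity analysis of Ku\v{c}era rather than genus theory alone, so for this part I would simply invoke \cite{kuvcera1995parity}; the remaining statements are then secured by \cite{connor88} and \cite{kaplan76}, exactly as recorded in the statement.
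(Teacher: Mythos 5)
Your proposal is correct and takes essentially the same approach as the paper: the paper offers no proof of this lemma at all, the citations to Conner--Hurrelbrink, Kaplan and Ku\v{c}era being embedded in the statement itself, which is exactly how you resolve it. Your supplementary sketch (genus theory for the $2$-rank, R\'edei--Reichardt for the $4$-rank, the norm of the fundamental unit for the narrow-versus-wide comparison, and deferral to Ku\v{c}era's quartic-symbol criteria for items 5 and 6) is sound, but it is additional exposition of what the cited sources prove rather than a different route.
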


 	\begin{lemma}[\cite{Qinred}, Lemma 2.4]\label{ambiguous class number formula} Let $K/k$ be a quadratic extension. If the class number of $k$ is odd, then the rank of the $2$-class group of $K$ is given by
 		$$r_2({Cl}(K))=t_{K/k}-1-e_{K/k},$$
 		where  $t_{K/k}$ is the number of  ramified primes (finite or infinite) in the extension  $K/k$ and $e_{K/k}$ is  defined by   $2^{e_{K/k}}=[E_{k}:E_{k} \cap N_{K/k}(K^*)]$.
 	\end{lemma}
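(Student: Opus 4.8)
The plan is to deduce the statement from Chevalley's classical ambiguous class number formula, reducing the computation of the $2$-rank of $Cl(K)$ to the determination of the group of ambiguous classes. Write $G=\mathrm{Gal}(K/k)=\langle\sigma\rangle$, so that $|G|=2$. Chevalley's formula for the cyclic extension $K/k$ gives
$$|Cl(K)^G|=\frac{h(k)\prod_{v}e_v}{[K:k]\,[E_k:E_k\cap N_{K/k}(K^*)]},$$
where the product runs over all places $v$ of $k$ (finite and infinite) and $e_v$ is the corresponding ramification index. Since $K/k$ is quadratic, every ramified place has $e_v=2$ while the unramified ones contribute $1$, so $\prod_v e_v=2^{t_{K/k}}$. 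Moreover $(E_k)^2\subseteq E_k\cap N_{K/k}(K^*)$ because $N_{K/k}(a)=a^2$ for $a\in k^*$, so the index $[E_k:E_k\cap N_{K/k}(K^*)]$ is killed by $2$ and hence equals $2^{e_{K/k}}$ by definition. Substituting, I obtain $|Cl(K)^G|=h(k)\,2^{t_{K/k}-1-e_{K/k}}$.

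Next I would invoke the hypothesis that $h(k)$ is odd. Passing to $2$-parts, the $2$-part of $|Cl(K)^G|$ equals $2^{t_{K/k}-1-e_{K/k}}$, and this $2$-part is precisely $|Cl_2(K)^G|$, the group of ambiguous classes sitting inside the $2$-class group $A:=Cl_2(K)$. The crucial point is to describe how $\sigma$ acts on $A$. Since $h(k)$ is odd, $Cl_2(k)$ is trivial; as the norm sends the $2$-group $A$ into the $2$-part of $Cl(k)$, the map $N_{K/k}\colon A\to Cl_2(k)$ is trivial. Combining this with the ideal identity $\mathfrak{A}\cdot\mathfrak{A}^{\sigma}=N_{K/k}(\mathfrak{A})\,\mathcal{O}_K$ (so that $\mathfrak{A}\,\mathfrak{A}^{\sigma}$ is the conorm of $N_{K/k}(\mathfrak{A})$), I get $\mathfrak{A}\,\mathfrak{A}^{\sigma}=1$ in $A$ for every class $\mathfrak{A}$; that is, $\sigma$ acts as inversion on $A$.

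Consequently $A^G=\{\mathfrak{A}\in A:\mathfrak{A}^{\sigma}=\mathfrak{A}\}=\{\mathfrak{A}\in A:\mathfrak{A}^2=1\}=A[2]$ is the $2$-torsion subgroup. Since $A$ is a finite abelian $2$-group, $|A[2]|=2^{r_2(Cl(K))}$, whence $|Cl_2(K)^G|=2^{r_2(Cl(K))}$. Comparing with the $2$-part read off from Chevalley's formula yields $r_2(Cl(K))=t_{K/k}-1-e_{K/k}$, as claimed. The only genuinely substantive step is the inversion action of $\sigma$ on $Cl_2(K)$, and I expect the hard part to be its careful justification: one must use both the triviality of $Cl_2(k)$ and the norm--conorm relation to convert the count $|Cl_2(K)^G|$ into the $2$-rank $r_2(Cl(K))$. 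Everything else amounts to extracting $2$-parts from the ambiguous class number formula.
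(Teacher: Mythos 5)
Your proposal is correct. Note that the paper itself gives no proof of this lemma: it is quoted verbatim from the cited reference (Yue, \emph{The generalized R\'edei-matrix}, Lemma 2.4), so there is no internal argument to compare against. Your derivation is the standard one and almost certainly the one underlying the cited result: Chevalley's ambiguous class number formula gives $|Cl(K)^G|=h(k)\,2^{t_{K/k}-1-e_{K/k}}$, the oddness of $h(k)$ lets you identify the $2$-part of $Cl(K)^G$ with $Cl_2(K)^G$ (the Sylow decomposition being $G$-equivariant), and the key step --- that $\sigma$ acts as inversion on $Cl_2(K)$, via $\mathfrak{A}\,\mathfrak{A}^{\sigma}=N_{K/k}(\mathfrak{A})\mathcal{O}_K$ together with the triviality of $Cl_2(k)$ --- correctly converts $|Cl_2(K)^G|$ into $|Cl_2(K)[2]|=2^{r_2(Cl(K))}$. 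All the intermediate claims (each ramified place contributing $e_v=2$, the index $[E_k:E_k\cap N_{K/k}(K^*)]$ being a power of $2$ since $E_k^2\subseteq N_{K/k}(K^*)$, and the norm--conorm identity checked on split, inert and ramified primes) are sound.
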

 		For a morphism $\tau$ we use the expression $1+\tau$ to define the morphism $(1+\tau)(x):=x\cdot\tau(x)$

 	\begin{lemma}[\cite{CEH2024}, Lemma 2.1]\label{calcul}
 		
 		\textit{Let $p \equiv 1 \pmod{8}$ be a prime number. Put $\varepsilon_{2p} = \beta + \alpha\sqrt{2p}$ with $\beta, \alpha \in \mathbb{Z}, and\; assume \; N(\varepsilon_{2p}) = 1$. Then $\sqrt{\varepsilon_{2p}} = \frac{1}{\sqrt{2}}(\alpha_1 + \alpha_2\sqrt{2p})$, for some integers $\alpha_1, \alpha_2$ such that $\alpha = \alpha_1\alpha_2$. It follows that:}
 		
 		\begin{equation}
 			\begin{array}{|c|c|c|c|c|c|}
 				\hline
 				\sigma & 1 + \tau_2 & 1 + \tau_1\tau_2 & 1 + \tau_1\tau_3 & 1 + \tau_2\tau_3 & 1 + \tau_1 \\
 				\hline
 				\sqrt{\varepsilon_{2p}}^\sigma & (-1)^u & -\varepsilon_{2p} & (-1)^{u+1} & (-1)^u & (-1)^{u+1} \\
 				\hline
 			\end{array}
 			\tag{1}
 		\end{equation}
 		
 		\textit{for some $u$ in $\{0, 1\}$ such that $\frac{1}{2}(\alpha_1^2 - 2p\alpha_2^2) = (-1)^u$.}
 	\end{lemma}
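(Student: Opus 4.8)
The plan is to first locate $\sqrt{\varepsilon_{2p}}$ inside the biquadratic field $k_1$, read off its coordinates by a Galois eigenspace argument, and then obtain the whole transformation table by direct substitution. Throughout I take $p=\po$, so that $\sqrt p=\sqrt{\po}$ and the automorphisms $\tau_1,\tau_2,\tau_3$ of $\mathrm{Gal}(\KK/\QQ)$ act as recorded above; note in particular that $\tau_3$ fixes $\sqrt 2$ and $\sqrt{\po}$, hence fixes $k_1=\QQ(\sqrt2,\sqrt{\po})$ pointwise.

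First I would invoke Lemma \ref{fork1k2}(2): since $N(\varepsilon_{2p})=1$, the element $\sqrt{\varepsilon_{2p}}$ belongs to a fundamental system of units of $k_1$, so $\eta:=\sqrt{\varepsilon_{2p}}\in k_1$ is a unit. Because $\varepsilon_{2p}=\beta+\alpha\sqrt{2p}$ lies in $\QQ(\sqrt{2p})$, which is the subfield of $k_1$ fixed by $\tau_1\tau_2$, we get $\tau_1\tau_2(\eta)^2=\eta^2$, whence $\tau_1\tau_2(\eta)=\pm\eta$. Were the sign $+$, then $\eta\in\QQ(\sqrt{2p})$ would be a unit with $\eta^2=\varepsilon_{2p}$, contradicting that $\varepsilon_{2p}$ is the fundamental unit of $\QQ(\sqrt{2p})$; hence $\tau_1\tau_2(\eta)=-\eta$. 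Since $\tau_1\tau_2$ negates both $\sqrt2$ and $\sqrt{\po}$ while fixing $1$ and $\sqrt{2p}$, its $(-1)$-eigenspace in $k_1$ is $\QQ\sqrt2\oplus\QQ\sqrt{\po}$, so $\eta=\tfrac{\alpha_1}{\sqrt2}+\alpha_2\sqrt{\po}=\tfrac{1}{\sqrt2}(\alpha_1+\alpha_2\sqrt{2p})$ for some rationals $\alpha_1,\alpha_2$. Squaring and comparing with $\beta+\alpha\sqrt{2p}$ then forces $\alpha=\alpha_1\alpha_2$ and $2\beta=\alpha_1^2+2p\alpha_2^2$, and the integrality of $\alpha_1,\alpha_2$ follows since $\eta$ is an algebraic integer.

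Next I would pin down the scalar $(-1)^u$. As $\tau_2$ fixes $\sqrt2$ and negates $\sqrt{\po}$, the product $\eta\,\tau_2(\eta)$ is rational and equals $\tfrac12(\alpha_1^2-2p\alpha_2^2)$; moreover $\tau_2(\varepsilon_{2p})=\beta-\alpha\sqrt{2p}=\varepsilon_{2p}^{-1}$ gives $(\eta\,\tau_2(\eta))^2=\varepsilon_{2p}\tau_2(\varepsilon_{2p})=1$, so $\eta\,\tau_2(\eta)=\pm1$. This is precisely the value $(-1)^u$ determined by $\tfrac12(\alpha_1^2-2p\alpha_2^2)=(-1)^u$, as in the statement.

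Finally, the whole table reduces to a one-line substitution once the shape of $\eta$ is known. Using $\tau_3|_{k_1}=\mathrm{id}$ we have $\tau_2\tau_3(\eta)=\tau_2(\eta)$ and $\tau_1\tau_3(\eta)=\tau_1(\eta)$, so it suffices to evaluate $\eta^{1+\tau_2}$, $\eta^{1+\tau_1}$ and $\eta^{1+\tau_1\tau_2}$. From $\eta=\tfrac{\alpha_1}{\sqrt2}+\alpha_2\sqrt{\po}$ one gets $\eta^{1+\tau_2}=\tfrac12(\alpha_1^2-2p\alpha_2^2)=(-1)^u$, $\eta^{1+\tau_1}=p\alpha_2^2-\tfrac12\alpha_1^2=(-1)^{u+1}$, and $\eta^{1+\tau_1\tau_2}=-\eta^2=-\varepsilon_{2p}$, which fills every entry. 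The only genuinely delicate point is the second paragraph: guaranteeing $\eta\in k_1$ and that the eigenvector coordinates $\alpha_1,\alpha_2$ are integers with $\alpha=\alpha_1\alpha_2$. I would settle the membership by Lemma \ref{fork1k2}(2) and the integrality by the Pell-type factorization $(\beta-1)(\beta+1)=2p\alpha^2$ combined with the minimality of the fundamental unit, which rules out the competing factorization that would place $\sqrt{\varepsilon_{2p}}$ inside $\QQ(\sqrt{2p})$.
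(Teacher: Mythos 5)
Your proof is correct, but note that this paper does not prove Lemma \ref{calcul} at all: it is quoted from \cite{CEH2024}, and the proofs of this type in the present paper (e.g.\ Remark \ref{twosquare} and Lemma \ref{squaretest}) proceed by the arithmetic route, namely writing $N(\varepsilon_{2p})=1$ as $(\beta-1)(\beta+1)=2p\alpha^2$, sorting the possible factorizations by unique factorization in $\ZZ$, discarding the impossible ones via Lemma \ref{tettt}, and reading off $\sqrt{\varepsilon_{2p}}$ explicitly from the surviving systems. Your route is genuinely different: you first get the membership $\sqrt{\varepsilon_{2p}}\in k_1$ for free from Lemma \ref{fork1k2}(2), then locate $\sqrt{\varepsilon_{2p}}$ in the $(-1)$-eigenspace of $\tau_1\tau_2$ (the sign being forced because a fundamental unit cannot be a square of a unit), which yields the shape $\frac{1}{\sqrt 2}(\alpha_1+\alpha_2\sqrt{2p})$ without any case analysis; the table then follows by substitution exactly as in the original. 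What the eigenspace argument buys is brevity and the elimination of the case-by-case factorization; what the arithmetic argument buys is that it produces the integrality of $\alpha_1,\alpha_2$ and the relation $\alpha=\alpha_1\alpha_2$ automatically, which is precisely the one point where your write-up is loose. Saying integrality ``follows since $\eta$ is an algebraic integer'' is not enough on its own (traces to the quadratic subfields only give $\alpha_1\in\ZZ$ and $2\alpha_2\in\ZZ$ directly), and your closing appeal to ``minimality of the fundamental unit'' addresses a different issue. The clean fix is already inside your own proof: combine $2\beta=\alpha_1^2+2p\alpha_2^2$ (from squaring) with $\frac12(\alpha_1^2-2p\alpha_2^2)=\pm1$ (from your third paragraph) to get $\alpha_1^2=\beta\pm1\in\ZZ$ and $2p\alpha_2^2=\beta\mp1\in\ZZ$; a rational whose square is an integer is an integer, and since $2p$ is squarefree the same conclusion holds for $\alpha_2$. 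With that one line inserted, your argument is complete.
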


 	\begin{lemma}[\cite{Az-00}, Lemma 5]\label{tettt}
 		Let $d>1$ be a square free interger and $\varepsilon_d=x+y\sqrt{d}$, where $x$ and $y$ are intergers or semi-integers. If $N(\varepsilon_d)=1$, then $2(x+1), 2(x-1), 2d(x+1)$ and $2d(x-1)$ are not squares in $\QQ$.    
 	\end{lemma}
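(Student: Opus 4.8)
The plan is to argue by contradiction, exploiting that $\varepsilon_d$ is the \emph{smallest} unit exceeding $1$: if any of the four numbers were a square in $\QQ$, I would manufacture a unit $\eta$ with $\eta^2=\varepsilon_d$ and $1<\eta<\varepsilon_d$, violating minimality. Throughout I may take $\varepsilon_d>1$, so that from $N(\varepsilon_d)=x^2-dy^2=1$ and $\bar\varepsilon_d=1/\varepsilon_d\in(0,1)$ one gets $x=\tfrac12(\varepsilon_d+\bar\varepsilon_d)>1$ and $y>0$; in particular all four quantities $2(x\pm1)$, $2d(x\pm1)$ are positive, and (whether $x,y$ are integers or semi-integers) $2(x\pm1)$ is an ordinary rational integer.

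First I would record the two pairing identities
\begin{equation*}
  2(x+1)\cdot 2d(x-1)=4d(x^2-1)=(2dy)^2,\qquad
  2(x-1)\cdot 2d(x+1)=4d(x^2-1)=(2dy)^2,
\end{equation*}
coming from $x^2-1=dy^2$. Since every factor is positive, $2(x+1)$ is a square in $\QQ$ exactly when $2d(x-1)$ is, and $2(x-1)$ is a square exactly when $2d(x+1)$ is. Hence it suffices to rule out that $2(x+1)$ or $2(x-1)$ is a square.

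Next, suppose for contradiction that $2(x+1)=c^2$ with $c\in\QQ$; by the first identity $2d(x-1)$ is then also a square, say $2d(x-1)=w^2$. Putting $u=c/2$ and $v=w/(2d)$, both positive rationals, a direct check gives $u^2=(x+1)/2$, $dv^2=(x-1)/2$, whence $u^2+dv^2=x$, $u^2-dv^2=1$, and $(2uv)^2=4u^2v^2=(x^2-1)/d=y^2$, so $2uv=y$ because $u,v,y>0$. Therefore $\eta:=u+v\sqrt d\in\QQ(\sqrt d)$ satisfies $\eta^2=\varepsilon_d$ and $N(\eta)=1$. The symmetric case $2(x-1)=c^2$ is handled identically, now producing $\eta$ with $N(\eta)=-1$ (one takes $u^2=(x-1)/2$, $dv^2=(x+1)/2$).

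The step I expect to be the crux is verifying that the auxiliary element $\eta$ is genuinely a \emph{unit}, not merely an element of $\QQ(\sqrt d)$. For this I would invoke the integrality criterion: $\eta$ is an algebraic integer iff its trace $2u$ and norm $u^2-dv^2$ both lie in $\ZZ$. The norm equals $\pm1$ by construction, while $2u=c$ with $c^2=2(x\pm1)$ an \emph{integer}; since an integer that is a square in $\QQ$ is a perfect square, $c\in\ZZ$, so $\eta\in\OO_{\QQ(\sqrt d)}$ and $N(\eta)=\pm1$ forces $\eta\in E_{\QQ(\sqrt d)}$. Finally, after replacing $\eta$ by $-\eta$ if needed, $\eta=\sqrt{\varepsilon_d}$ satisfies $1<\eta<\varepsilon_d$, contradicting the minimality of the fundamental unit. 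This contradiction shows that none of $2(x+1),2(x-1),2d(x+1),2d(x-1)$ is a square in $\QQ$, as claimed.
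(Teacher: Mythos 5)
Your proof is correct. Note that the paper itself gives no proof of this lemma: it is quoted verbatim from \cite{Az-00} (Azizi, Lemma~5), so there is no internal argument to compare against; your write-up is essentially the standard argument behind that cited result. Concretely, you use $x^2-1=dy^2$ to pair $2(x\pm1)$ with $2d(x\mp1)$, and from a hypothetical rational square you assemble $\eta=u+v\sqrt d$ with $\eta^2=\varepsilon_d$ and $N(\eta)=\pm1$; the one genuinely delicate point, namely that $\eta$ is an algebraic integer (and hence a unit) even when $x,y$ are semi-integers, you handle correctly by observing that $2(x\pm1)$ is always a rational integer, so its rational square root $c=2u$ is an integer and the trace--norm criterion applies. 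The contradiction with $\varepsilon_d$ being the smallest unit exceeding $1$ then finishes the proof, and all the intermediate identities ($u^2\pm dv^2$, $2uv=y$, positivity) check out.
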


 	\begin{remark}[\cite{AzZektaous}, Remark 2.3]\label{remarkzekh}
 		If $N(\varepsilon_{\po\pt})=N(\varepsilon_{2\po\pt})=-1$, then there exist four rational numbers  $\alpha, \beta,\gamma$ and $\delta$ such that :$$\sqrt{\varepsilon_{2}\varepsilon_{\po\pt}\varepsilon_{2\po\pt}}=\alpha +\beta\sqrt{2}+\gamma\sqrt{\po\pt}+\delta\sqrt{2\po\pt} $$
 		or 
 		$$\sqrt{\varepsilon_{2}\varepsilon_{\po\pt}\varepsilon_{2\po\pt}}=\alpha\sqrt{\po} +\beta\sqrt{\pt}+\gamma\sqrt{2\po}+\delta\sqrt{2\pt}. $$
 	\end{remark}
 	
 	Inspired by the proof of the previous remark, we get the following useful lemma.
 	\begin{lemma}\label{squaretest}
 		Let $\po\equiv\pt \equiv 1 \pmod 4$ be two prime numbers.   
 		\begin{enumerate}[$1)$]
 			\item If $N(\varepsilon_{2\po\pt})=-1$, then $\sqrt{\varepsilon_{2}\varepsilon_{\po}\varepsilon_{\pt}\varepsilon_{2\po\pt}}\in \KK.$
 			\item If $N(\varepsilon_{\po\pt})=-1$, then $\sqrt{\varepsilon_{\po}\varepsilon_{\pt}\varepsilon_{\po\pt}}\in \KK$.
 		\end{enumerate}
 	\end{lemma}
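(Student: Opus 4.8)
The plan is to prove each statement by showing that the displayed product of units is a perfect square in $\KK$; once this is established the (positive) square root automatically lies in $\KK$. In both parts the engine is the computation of the Galois conjugates of the product, which the norm hypotheses make transparent: if $N(\varepsilon_d)=-1$, then the nontrivial automorphism of $\QQ(\sqrt d)/\QQ$ sends $\varepsilon_d$ to $-\varepsilon_d^{-1}$. Since $2\equiv\po\equiv\pt\equiv 1\pmod 4$ we always have $N(\varepsilon_2)=N(\varepsilon_{\po})=N(\varepsilon_{\pt})=-1$, so the only extra input is the hypothesis on $N(\varepsilon_{2\po\pt})$ in part $1)$ (resp. $N(\varepsilon_{\po\pt})$ in part $2)$).

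For part $1)$ I would set $u=\varepsilon_2\varepsilon_{\po}\varepsilon_{\pt}\varepsilon_{2\po\pt}$ and record the action of $\tau_1,\tau_2,\tau_3$, using that $\tau_1$ negates $\sqrt 2$ and $\sqrt{2\po\pt}$ while fixing $\sqrt{\po},\sqrt{\pt}$ (and symmetrically for $\tau_2,\tau_3$). With $N(\varepsilon_{2\po\pt})=-1$ this gives
\[
u\,\tau_1(u)=(\varepsilon_{\po}\varepsilon_{\pt})^2,\qquad u\,\tau_2(u)=(\varepsilon_2\varepsilon_{\pt})^2,\qquad u\,\tau_3(u)=(\varepsilon_2\varepsilon_{\po})^2,
\]
and, for later use, $\tau_1\tau_2\tau_3(u)=u^{-1}$. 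These identities say exactly that the class of $u$ in $E_\KK/E_\KK^2$ is fixed by $\mathrm{Gal}(\KK/\QQ)$. For part $2)$ the same computation carried out inside $k_4=\QQ(\sqrt{\po},\sqrt{\pt})$ with the automorphisms induced by $\tau_2$ and $\tau_3$, using $N(\varepsilon_{\po\pt})=-1$, yields $u_2\,\tau_2(u_2)=\varepsilon_{\pt}^{2}$ and $u_2\,\tau_3(u_2)=\varepsilon_{\po}^{2}$ for $u_2=\varepsilon_{\po}\varepsilon_{\pt}\varepsilon_{\po\pt}$; this is precisely Kubota's classical criterion for a real biquadratic field, whence $\sqrt{u_2}\in k_4\subset\KK$, which settles part $2)$.

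The main obstacle is the passage from ``invariant modulo squares'' to ``genuine square in $\KK$,'' since a Galois-fixed unit class need not a priori be trivial. Following the method behind Remark \ref{remarkzekh}, I would write the putative root in the integral basis of $\KK$ and solve, the key point being that each of the four factors of $u$ contributes one conjugation by $i$, whose product is trivial, so the root is \emph{real} and lands in $\KK$ rather than in $\KK(i)=\LL$; this is exactly where the hypothesis $N=-1$ (rather than $+1$) is indispensable. As a clean shortcut in the subcase where moreover $N(\varepsilon_{\po\pt})=-1$, part $1)$ follows formally from part $2)$ and Remark \ref{remarkzekh}: multiplying $\sqrt{\varepsilon_{\po}\varepsilon_{\pt}\varepsilon_{\po\pt}}\in\KK$ by $\sqrt{\varepsilon_2\varepsilon_{\po\pt}\varepsilon_{2\po\pt}}\in\KK$ produces $\varepsilon_{\po\pt}\sqrt{\varepsilon_2\varepsilon_{\po}\varepsilon_{\pt}\varepsilon_{2\po\pt}}\in\KK$. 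The genuinely new work is thus confined to the subcase $N(\varepsilon_{\po\pt})=1$, where I expect to carry out the explicit basis computation, or alternatively a Hilbert~$90$/genus descent along $\KK/k_7$ (on which the relation $\tau_1\tau_2\tau_3(u)=u^{-1}$ gives $N_{\KK/k_7}(u)=1$), to exhibit $\sqrt{u}$ inside $\KK$.
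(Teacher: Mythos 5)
Your Galois-conjugate computations are correct, but they establish only that $\varepsilon_{\po}\varepsilon_{\pt}\varepsilon_{\po\pt}$ and $\varepsilon_{2}\varepsilon_{\po}\varepsilon_{\pt}\varepsilon_{2\po\pt}$ are fixed by the Galois group \emph{modulo squares}, which -- as you yourself note -- is only a necessary condition; the step that closes this gap is missing in both parts. In part $2)$, the deduction ``this is precisely Kubota's classical criterion \dots whence $\sqrt{u_2}\in k_4$'' is not valid: there is no theorem saying that a unit whose norms to the three quadratic subfields are squares must itself be a square. Indeed, $-u_2$ satisfies exactly the same relations $(-u_2)^{1+\tau_2}=\varepsilon_{\pt}^2$ and $(-u_2)^{1+\tau_3}=\varepsilon_{\po}^2$, yet it is totally negative and hence certainly not a square. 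What Kuroda's theorem (invoked by the paper in Remark \ref{forq4} via \cite{Ku-43}) actually says is that when all three norms are $-1$, a fundamental system of units of $k_4$ is \emph{either} $\{\varepsilon_{\po},\varepsilon_{\pt},\varepsilon_{\po\pt}\}$ \emph{or} $\{\varepsilon_{\po},\varepsilon_{\pt},\sqrt{\varepsilon_{\po}\varepsilon_{\pt}\varepsilon_{\po\pt}}\}$; deciding between the two requires genuine extra input (in Remark \ref{forq4} it is a class-number argument under the additional hypothesis $\genfrac(){}{0}{\po}{\pt}=-1$). So, whether or not your stronger conclusion $\sqrt{u_2}\in k_4$ is true (the lemma claims and needs only membership in $\KK$), the argument offered for it proves nothing. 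In part $1)$, your shortcut is legitimate but covers only the subcase $N(\varepsilon_{\po\pt})=-1$ and rests on the unproved part $2)$; the remaining subcase $N(\varepsilon_{\po\pt})=1$ is explicitly deferred (``I expect to carry out the explicit basis computation''), and the Hilbert 90 alternative does not help: $N_{\KK/k_7}(u)=1$ gives $u=w^{1-\sigma}$ for some $w\in\KK^{*}$, not $u\in\KK^{*2}$.

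The deferred computation is in fact the entire content of the paper's proof, and it is exactly where the norm hypotheses are used. The paper factors $\po=\pi_1\pi_2$ and $\pt=\pi_3\pi_4$ in $\ZZ[i]$, uses $N(\varepsilon_{\po})=N(\varepsilon_{\pt})=-1$ to reduce the factorization of $x\pm i$ (with $\varepsilon_{\po}=x+y\sqrt{\po}$) to two admissible systems -- the ``rational'' system is ruled out precisely by the norm condition -- yielding explicit expressions of $\sqrt{2\varepsilon_{\po}}$ or $2\sqrt{\varepsilon_{\po}}$ as $\ZZ[i]$-combinations of $\sqrt{\pi_1},\sqrt{\pi_2}$; it quotes \cite{AzZektaous} for the analogous forms of $\sqrt{\varepsilon_{2\po\pt}}$ (part $1$) and $\sqrt{\varepsilon_{\po\pt}}$ (part $2$), writes $\sqrt{\varepsilon_2}=\frac12\left(\sqrt{1+i}+\sqrt{1-i}\right)$, and then multiplies everything out, observing that the resulting expansion is a sum of pairwise complex-conjugate terms, hence a rational linear combination of $1,\sqrt2,\sqrt{\po},\dots,\sqrt{2\po\pt}$, i.e.\ an element of $\KK$. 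Your remark that ``each factor contributes one conjugation by $i$, whose product is trivial'' gestures at precisely this mechanism, but the proposal never carries out the construction of the explicit square roots nor the expansion; without that (or a correct substitute, e.g.\ a class-field-theoretic computation of the relevant unit indices), Lemma \ref{squaretest} is not established.
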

 	\begin{proof}
 		\begin{enumerate}[$1)$]
 			\item	Since $ \po \equiv\pt \equiv 1 \pmod 4$, this implies that  there exist  $\pi_1,\pi_2,\pi_3,\pi_4\in \ZZ[i]$ such that $\po=\pi_1\cdot\pi_2$ and $\pt=\pi_3\cdot\pi_4$ with $\overline{\pi_1}=\pi_2$ and $\overline{\pi_3}=\pi_4$. 
 			Put  $\varepsilon_{\po}=x+y\sqrt{\po}\in \ZZ[\frac{1+\sqrt{\po}}{2}]$ and $\varepsilon_{\pt}=a+b\sqrt{\pt}\in \ZZ[\frac{1+\sqrt{\pt}}{2}]$,
 			where $x$, $y$, $a$ and $b$ are   integers or semi-integers. We distinguish two cases:\\
 			$\star$ Assume that  $x$, $y$, $a$ and $b$ are are integers. As $N(\varepsilon_{\po})=-1$, we obtain the following systems:
 			$$(1):\ \left\{ \begin{array}{ll}
 				x\pm i=y_1^2\\
 				x\mp i=\pi_1\pi_2 y_2^2,
 			\end{array}\right. \quad
 			\text{or}\quad
 			(2):\ \left\{ \begin{array}{ll}
 				x\pm i=\pi_1y_1^2\\
 				x\mp i=\pi_2 y_2^2,
 			\end{array}\right.\quad
 			\text{or}\quad
 			(3):\ \left\{ \begin{array}{ll}
 				x\pm i=i\pi_1y_1^2\\
 				x\mp i=-i\pi_2 y_2^2,
 			\end{array}\right.\quad
 			$$	
 			where $y_1$ and $y_2$ are two elements of  $\ZZ[i]$ such that  $y=y_1 y_2$.
 			The first system  does not hold, in fact it implies that  $\sqrt{2\varepsilon_{\po}}\in \ZZ[\sqrt{\po}]$, which is not true.
 			Notice that the systems (2) and (3) give respectively the following equalities:
 			\begin{eqnarray}
 				\sqrt{2\varepsilon_{\po}}&=&\sqrt{\pi_1}y_1 + \sqrt{\pi_2}y_2, \text{ and }\label{eq1}\\
 				2\sqrt{\varepsilon_{\po}}&=&\sqrt{i\pi_1}y_1+\sqrt{-i\pi_2}y_2=(1+i)\sqrt{\pi_1}y_1 + (1-i)\sqrt{\pi_2}y_2.\nonumber
 			\end{eqnarray}
 			Notice furthermore that we have similar equalities for $\varepsilon_{\pt}$, i.e.
 			\begin{eqnarray}
 				\sqrt{2\varepsilon_{\pt}}&=&\sqrt{\pi_3}b_1 + \sqrt{\pi_4}b_2 \label{eq2}\text{ and }\\  
 				2\sqrt{\varepsilon_{\pt}}&=&(1+i)\sqrt{\pi_3}b_1 + (1-i)\sqrt{\pi_4}b_2. \label{eq23} 
 			\end{eqnarray} 
 			with $b_1$, $b_2\in \ZZ[i] $ such that $b=b_1b_2$.
 			Similarly, if $\alpha$ and $\beta $ are such that   $\varepsilon_{2\po\pt}=\alpha+\beta\sqrt{2\po\pt}$, then  according to 
 			\cite[Lemma 3.1]{AzZektaous}, $\sqrt{\varepsilon_{2\po\pt}}$ takes one of the following forms:
 			\begin{eqnarray}
 				\sqrt{\varepsilon_{2\po\pt}}&=&\frac{1}{2}(\beta_1(1+i)\sqrt{(1\pm i)\pi_1\pi_3}+\beta_2(1-i)\sqrt{(1\mp i)\pi_2\pi_4}),\label{eq3}\\
 				\sqrt{\varepsilon_{2\po\pt}}&=&\frac{1}{2}(\beta_1(1+i)\sqrt{(1\pm i)\pi_1\pi_4}+\beta_2(1-i)\sqrt{(1\mp i)\pi_2\pi_3}). \nonumber
 			\end{eqnarray}
 			Furthermore, we have $\sqrt{\varepsilon_2}=\frac{1}{2}(\sqrt{1+i}+\sqrt{1-i})$. We shall proceed case by case as follows:
 			\begin{enumerate}[$\bullet$]
 				\item If the equalities \eqref{eq1}, \eqref{eq2} and \eqref{eq3} hold, then we have 
 				{\footnotesize\begin{eqnarray*}
 						& & \sqrt{\varepsilon_2}\sqrt{2\varepsilon_{\po}} \sqrt{2\varepsilon_{\pt}}\sqrt{\varepsilon_{2\po\pt}} \\
 						&= &
 						\frac{1}{4}(\sqrt{\pi_1}y_1 + \sqrt{\pi_2}y_2)(\sqrt{\pi_3}b_1 + \sqrt{\pi_4}b_2) (\beta_1(1+i)\sqrt{(1+i)\pi_1\pi_3} + \beta_2(1-i)\sqrt{(1-i)\pi_2\pi_4})(\sqrt{1+i}+\sqrt{1-i})\\
 						&=& \frac{1}{4}( \sqrt{\pi_1 \pi_3} \, y_1 b_1 + \sqrt{\pi_1 \pi_4} \, y_1 b_2 + \sqrt{\pi_2 \pi_3} \, y_2 b_1 + \sqrt{\pi_2 \pi_4} \, y_2 b_2)( 
 						\beta_1(1+i)^2\sqrt{\pi_1\pi_3} \\
 						&&+ \beta_1(1+i)\sqrt{2\pi_1\pi_3}  
 						+ \beta_2(1-i)\sqrt{2\pi_2\pi_4} \\ 
 						&&+ \beta_2(1-i)^2\sqrt{\pi_2\pi_4})\\
 						&=&
 						\frac{1}{4}( y_1 b_1 \beta_1 2i \pi_1 \pi_3 +  y_1 b_1  \beta_1 (1+i) \pi_1 \pi_3 \sqrt{2} +  y_1 b_1 \beta_2 (1-i) \sqrt{2\po\pt}  -2i y_1 b_1  \beta_2  \sqrt{\po\pt}  \\
 						& &+   2i y_1 b_2 \beta_1  \pi_1 \sqrt{\pt} +  y_1 b_2  \beta_1 (1+i)\pi_1 \sqrt{2\pt} +  y_1 b_2  \beta_2 (1-i)\pi_4 \sqrt{2\po} -2i y_1 b_2  \beta_2 \pi_4\sqrt{\po} \\
 						&&+  2i y_2 b_1  \beta_1  \pi_3\sqrt{\po } + y_2 b_1  \beta_1 (1+i) \pi_3\sqrt{2\po} +  y_2 b_1  \beta_2 (1-i) \pi_2\sqrt{2\pt}   -2iy_2 b_1  \beta_2  \pi_2\sqrt{ \pt}  \\
 						& &+  2i y_2 b_2  \beta_1  \sqrt{\po\pt} +  y_2 b_2  \beta_1 (1+i) \sqrt{2\po\pt} +  y_2 b_2  \beta_2 (1-i) \pi_2 \pi_4\sqrt{2} -2i\pi_2 \pi_4 y_2 b_2 \beta_2 )  \\
 						&=&  A+B\sqrt{2}+C\sqrt{\po}+D\sqrt{\pt}+E\sqrt{2\po}+F\sqrt{2\pt}+G\sqrt{\po\pt}+H\sqrt{2\po\pt}\in \KK,
 				\end{eqnarray*}}
 			since the
 			expression above is the sum of pairwise conjugated complex numbers.
 				
 				\item If the equalities \eqref{eq1}, \eqref{eq23} and \eqref{eq3} hold, then we have 
 				$\sqrt{\varepsilon_{2}}\sqrt{2\varepsilon_{\po}}2\sqrt{\varepsilon_{\pt}}\sqrt{\varepsilon_{2\po\pt}}=\frac{1}{4}(\sqrt{1+i}+\sqrt{1-i})(\sqrt{\pi_1}y_1 + \sqrt{\pi_2}y_2)((1+i)\sqrt{\pi_3}b_1 + (1-i)\sqrt{\pi_4}b_2)((\beta_1(1+i)\sqrt{(1+ i)\pi_1\pi_3}+\beta_2(1-i)\sqrt{(1- i)\pi_2\pi_4}))\in \KK$ as above.
 				\item The verification of the other cases is analogous.
 			\end{enumerate}
 			\noindent	$\star$ When $x$, $y$, $a$ and $b$ are are semi-integers such that $x=\tilde{x}/2$, $y=\tilde{y}/2$, $a=\tilde{a}/2$ and $b=\tilde{b}/2$ for some integers $\tilde{x}$, $\tilde{y}$, $\tilde{a}$ and $\tilde{b}$, then     the conditions on the norm imply that  $(\tilde{x}+ 2i)(\tilde{x}-2i)=\po \tilde{y}^2$ and $(\tilde{a}+ 2i)(\tilde{a}-2i)=\pt \tilde{b}^2$,
 			which gives the same systems as above and so the same results. Thus, we have the first item. 
 			\item 	Assume that $N(\varepsilon_{\po\pt})=-1$. Let   $\varepsilon_{\po\pt}=c+d\sqrt{\po\pt}$, where $c$ and $d$ are integers or semi-integers. According to \cite[Remark 2.3]{AzZektaous}, we have:
 			$$
 			\left \{
 			\begin{array}{ccc}
 				\sqrt{\varepsilon_{\po\pt}} & = & d_1\sqrt{\pi_1\pi_3}+d_2\sqrt{\pi_2\pi_4}\; or \\
 				\sqrt{\varepsilon_{\po\pt}} & = & d_1\sqrt{\pi_1\pi_4}+d_2\sqrt{\pi_2\pi_3}\; or \\
 				\sqrt{2\varepsilon_{\po\pt}} & = & d_1\sqrt{\pi_1\pi_3}+d_2\sqrt{\pi_2\pi_4}\; or \\
 				\sqrt{2\varepsilon_{\po\pt}} & = & d_1\sqrt{\pi_1\pi_4}+d_2\sqrt{\pi_2\pi_3}\; 
 			\end{array}
 			\right.
 			$$
 			where $d_1$ and $d_2$ are two elements of $\ZZ[i]$ such that $d=d_1d_2$.	Proceeding similarly as in the proof of the first item we prove that $\sqrt{\varepsilon_{\po}\varepsilon_{\pt}\varepsilon_{\po\pt}}\in \KK$.

 		\end{enumerate}	
 	\end{proof}

 	\begin{lemma}\label{norms}
 		Let $p\equiv p' \equiv 1 \pmod 4$ be two prime numbers.  
 		\begin{enumerate}[$1)$]
 			\item $N(\varepsilon_{p})=-1$.
 			\item If $p\equiv 5 \pmod 8$, then  $N(\varepsilon_{2p})=-1$  $($cf.   \cite[Proposition 19.9 ,  p. 147]{connor88}$)$.
 			\item If $\genfrac(){}{0}{p}{p'} =-1$, then $N(\varepsilon_{p p'})=-1$ $($cf.   \cite[Proposition 19.9 ,  p. 147]{connor88}$)$.
 			\item If two of the set $\left\{\genfrac(){}{0}{p}{p'},\genfrac(){}{0}{2}{p}, \genfrac(){}{0}{2}{p'}\right\}$ are equal to -1 , then $N(\varepsilon_{2p p'})=-1$  $($cf.   \cite[Corollary $3.6$,  p. 98]{AziTaouss}$)$.
 			\item If $p\equiv 1\pmod 8$ and  $  \genfrac(){}{0}{2}{p}_4\not=  \genfrac(){}{0}{p}{2}_4$, then $N(\varepsilon_{2p})=1$ $($cf.   \cite[Theorem 2 (2)(a)]{kuvcera1995parity}$)$. 
 			\item If  $p\equiv 1\pmod 8$ and  $  \genfrac(){}{0}{2}{p}_4 =  \genfrac(){}{0}{p}{2}_4=-1$, then $N(\varepsilon_{2p})=-1$ $($cf.   \cite[Theorem 2 (2)(b)]{kuvcera1995parity}$)$. 
 		\end{enumerate}
 	\end{lemma}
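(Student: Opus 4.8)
The statement is really a catalogue of norm computations for fundamental units, and apart from item $1)$ each case is literally the content of the reference attached to it in the statement itself. So the plan is to dispose of items $2)$–$6)$ by citation and then to give the one genuinely needed argument, for item $1)$. For $2)$ and $3)$ one simply quotes the two propositions of \cite{connor88} cited there (the criterion $N(\varepsilon_{2p})=-1$ when $p\equiv 5\pmod 8$, and $N(\varepsilon_{pp'})=-1$ when $\left(\frac{p}{p'}\right)=-1$). Item $4)$ is exactly \cite[Corollary~3.6]{AziTaouss} once one checks its hypothesis (two of the three symbols equal to $-1$) matches. Items $5)$ and $6)$ are the two halves of \cite[Theorem~2(2)]{kuvcera1995parity}, separated precisely by the quartic-residue conditions $\left(\frac{2}{p}\right)_4\neq\left(\frac{p}{2}\right)_4$ and $\left(\frac{2}{p}\right)_4=\left(\frac{p}{2}\right)_4=-1$. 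For these five items nothing beyond matching the hypotheses to the cited statements is required.

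The only part carrying content is item $1)$: for a prime $p\equiv 1\pmod 4$ one must show $N(\varepsilon_p)=-1$. I would argue through the narrow class group. Since $p\equiv 1\pmod 4$, the discriminant of $\QQ(\sqrt p)$ is $p$ itself, so exactly one finite prime ramifies. By Gauss genus theory the $2$-rank of the narrow class group $\mathrm{Cl}^+(\QQ(\sqrt p))$ equals $t-1$, where $t$ is the number of ramified primes; here $t=1$, so $\mathrm{Cl}^+(\QQ(\sqrt p))$ has trivial $2$-part and in particular the narrow class number $h^+$ is odd. On the other hand, the classical dichotomy gives $h^+=h$ when $N(\varepsilon_p)=-1$ and $h^+=2h$ when $N(\varepsilon_p)=+1$, the point being that a unit of norm $-1$ realizes the missing sign patterns so that $\mathrm{Cl}^+=\mathrm{Cl}$, whereas with $N(\varepsilon_p)=+1$ the totally positive units have index $2$. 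Since $h^+$ is odd it cannot equal the even number $2h$, which forces $h^+=h$ and hence $N(\varepsilon_p)=-1$.

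The main (and only) obstacle is thus item $1)$, and even there the work is light: one must just be careful that $p\equiv 1\pmod 4$ guarantees a single ramified prime, so that genus theory yields an odd narrow class number, and then invoke the standard relation between $h^+/h$ and the sign of $N(\varepsilon_p)$. An alternative, citation-free route is a classical descent on the Pell relation $a^2-pb^2=\pm 4$, factoring $(a-2)(a+2)=pb^2$ and exploiting $\gcd(a-2,a+2)\mid 4$; but the genus-theoretic argument above is cleaner, so I would present that one.
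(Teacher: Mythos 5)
Your proposal is correct and matches the paper's treatment: the paper offers no proof of this lemma at all, since items $2)$--$6)$ are justified by the citations embedded in the statement itself and item $1)$ is the classical fact, taken for granted, that a prime $p\equiv 1\pmod 4$ admits a fundamental unit of norm $-1$. Your genus-theoretic argument for item $1)$ (odd narrow class number from the single prime discriminant, combined with the dichotomy $h^+=h$ versus $h^+=2h$ according to the sign of $N(\varepsilon_p)$) is a valid and standard proof of that fact.
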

 	

 	\section{\bf The case: $\po \equiv \pt\equiv 5 \pmod8$ }$\,$ 		\label{casestable}	
 	
 	Let us start by proving some useful facts.
 	\begin{lemma}\label{equalies}
 		Let  $\po $ and $ \pt$ be two primes such that $\po \equiv \pt\equiv 5 \pmod8$    and $\genfrac(){}{0}{\po}{\pt} =-1$. Then
 		$h_2(k_3)=h_2(k_5)=h_2(k_6)$ and $h_2(\KK)=\frac 12h_2(k_3)=q(k_3)$.
 	\end{lemma}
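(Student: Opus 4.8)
The overall plan is to express every class number through a unit index by means of Kuroda's class number formula (Lemma~\ref{wada's f.}), to determine $h_2(\KK)$ by genus theory, and then to transfer this information to $k_3,k_5,k_6$ using the unramifiedness recorded in Remark~\ref{unramified}. First I collect the arithmetic data. As $\po\equiv\pt\equiv 5\pmod 8$ we have $\left(\frac{2}{\po}\right)=\left(\frac{2}{\pt}\right)=-1$, and by hypothesis $\left(\frac{\po}{\pt}\right)=-1$, so all three symbols $\left(\frac{\po}{\pt}\right),\left(\frac{2}{\po}\right),\left(\frac{2}{\pt}\right)$ equal $-1$. By Lemma~\ref{norms} every relevant norm is then $-1$ (in particular $N(\varepsilon_{\po\pt})=N(\varepsilon_{2\po\pt})=-1$), and by Lemma~\ref{class numbers of quadratic field} the quadratic $2$-class numbers are $h_2(2)=h_2(\po)=h_2(\pt)=1$, $h_2(2\po)=h_2(2\pt)=h_2(\po\pt)=2$ and $h_2(2\po\pt)=4$. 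Each of $k_3,k_5,k_6$ has three quadratic subfields whose $2$-class numbers multiply to $1\cdot 2\cdot 4=8$, so Lemma~\ref{wada's f.} (with $v=2$) gives
$$h_2(k_3)=2q(k_3),\qquad h_2(k_5)=2q(k_5),\qquad h_2(k_6)=2q(k_6).$$
In particular the identity $h_2(k_3)=2q(k_3)$ already yields the last assertion $\tfrac12 h_2(k_3)=q(k_3)$; the first two assertions will follow once $h_2(\KK)$ is computed.

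Next I would show $h_2(\KK)=1$. Applying Lemma~\ref{wada's f.} to $k_1=\QQ(\sqrt 2,\sqrt\po)$, together with $q(k_1)=2$ from Lemma~\ref{fork1k2}(1) (legitimate since $N(\varepsilon_{2\po})=-1$), gives $h_2(k_1)=\tfrac14\cdot 2\cdot(1\cdot 1\cdot 2)=1$; hence $h(k_1)$ is odd and the ambiguous class number formula (Lemma~\ref{ambiguous class number formula}) applies to the quadratic extension $\KK=k_1(\sqrt\pt)$. Only the primes above $\pt$ can ramify (as $\pt\equiv 1\pmod 4$), and since $\left(\frac{2}{\pt}\right)=\left(\frac{\po}{\pt}\right)=-1$ while $\left(\frac{2\po}{\pt}\right)=1$, the Frobenius of $\pt$ in $\mathrm{Gal}(k_1/\QQ)$ is the element inverting $\sqrt 2$ and $\sqrt\po$; it has order $2$, so $\pt$ has exactly two prime divisors in $k_1$, both ramifying in $\KK$. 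As $\KK$ is totally real this gives $t_{\KK/k_1}=2$, whence
$$r_2(\cl(\KK))=t_{\KK/k_1}-1-e_{\KK/k_1}=1-e_{\KK/k_1}.$$
The crux is therefore to prove $e_{\KK/k_1}=1$, i.e.\ $[E_{k_1}:E_{k_1}\cap N_{\KK/k_1}(\KK^{*})]=2$, which I would settle by evaluating the quadratic norm residue symbols $\left(\frac{u,\pt}{\mathfrak p}\right)$ at the two primes $\mathfrak p\mid\pt$ for the three fundamental units $u\in\{\varepsilon_2,\varepsilon_\po,\sqrt{\varepsilon_2\varepsilon_\po\varepsilon_{2\po}}\}$ of $k_1$ and checking that exactly one nontrivial local condition survives. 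This yields $r_2(\cl(\KK))=0$, that is $h_2(\KK)=1$.

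Granting $h_2(\KK)=1$, the remaining assertions follow softly from Remark~\ref{unramified}. For each $i\in\{3,5,6\}$ the extension $\KK/k_i$ is unramified quadratic, so by class field theory $\KK$ is the class field of an index-$2$ subgroup of $\cl(k_i)$ and $N_{\KK/k_i}(\cl(\KK))$ has index $2$ in $\cl(k_i)$. Since $\cl_2(\KK)$ is trivial, $N_{\KK/k_i}(\cl(\KK))$ has odd order, so $|\cl(k_i)|=2\cdot(\text{odd})$ and hence $|\cl_2(k_i)|=2$; thus $h_2(k_3)=h_2(k_5)=h_2(k_6)=2$, which is the first assertion. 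Then $h_2(\KK)=1=\tfrac12\cdot 2=\tfrac12 h_2(k_3)$ is the second, and combined with $h_2(k_3)=2q(k_3)$ it forces $q(k_3)=1=\tfrac12 h_2(k_3)$, giving the third.

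The one genuinely delicate point is the genus-theoretic input $e_{\KK/k_1}=1$; everything else is bookkeeping with Kuroda's formula and a standard norm-index argument. As an independent check one may instead compute $q(\KK)$ directly through Wada's algorithm: the roots $\sqrt{\varepsilon_2\varepsilon_\po\varepsilon_{2\po}}$ and $\sqrt{\varepsilon_2\varepsilon_\pt\varepsilon_{2\pt}}$ coming from $k_1,k_2$, together with $\sqrt{\varepsilon_2\varepsilon_\po\varepsilon_\pt\varepsilon_{2\po\pt}}$ and $\sqrt{\varepsilon_\po\varepsilon_\pt\varepsilon_{\po\pt}}$ furnished by Lemma~\ref{squaretest}, represent four independent classes in $E_\KK$ modulo the subgroup generated by the units of the seven quadratic subfields, so $q(\KK)\ge 16$; by Lemma~\ref{wada's f.} the equality $q(\KK)=16$ is then equivalent to $h_2(\KK)=1$, matching the genus computation.
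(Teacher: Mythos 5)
Your opening reductions are fine: Lemmas \ref{norms} and \ref{class numbers of quadratic field} give all the stated norms and quadratic $2$-class numbers, and Kuroda's formula (Lemma \ref{wada's f.}) then yields $h_2(k_i)=2q(k_i)$ for $i\in\{3,5,6\}$, hence the equality $\tfrac12h_2(k_3)=q(k_3)$. The proof collapses, however, at what you yourself call the crux: the claim $e_{\KK/k_1}=1$, equivalently $h_2(\KK)=1$. You never prove it (you only announce that you ``would settle'' it by a norm-residue computation), and it is \emph{false} in general; this is precisely why the lemma is stated as the relation $h_2(\KK)=\tfrac12h_2(k_3)=q(k_3)$ rather than as an absolute value. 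Under the hypotheses, $-1$, $\varepsilon_2$ and $\varepsilon_{\po}$ are automatically local norms at the two primes of $k_1$ above $\pt$ (their norms over $\QQ$ are $-1$ and $\pt\equiv1\pmod4$), so $e_{\KK/k_1}$ hinges entirely on whether the third fundamental unit $\sqrt{\varepsilon_2\varepsilon_{\po}\varepsilon_{2\po}}$ is a local norm, and this genuinely varies with the pair $(\po,\pt)$: it is equivalent to $q(k_3)=1$, the very dichotomy the paper tracks in Lemma \ref{lmunit}, Corollary \ref{sqrtcnd} and Theorem \ref{MT1A} (2)--(3). Concretely, for $\po=5$, $\pt=13$ (so $\po\equiv\pt\equiv5\pmod8$ and $\genfrac(){}{0}{5}{13}=-1$) one has $\varepsilon_2=1+\sqrt2$, $\varepsilon_{65}=8+\sqrt{65}$, $\varepsilon_{130}=57+5\sqrt{130}$, and
$$\varepsilon_{2}\varepsilon_{65}\varepsilon_{130}=1106+781\sqrt2+137\sqrt{65}+97\sqrt{130}=\left(\frac{34+23\sqrt2+4\sqrt{65}+3\sqrt{130}}{2}\right)^{2},$$
a square in $k_3=\QQ(\sqrt2,\sqrt{65})$. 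Hence $q(k_3)=2$, $h_2(k_3)=4$ and $h_2(\KK)=2$ for this pair: both your intermediate claim $h_2(\KK)=1$ and your conclusion $h_2(k_3)=h_2(k_5)=h_2(k_6)=2$ fail.

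The paper's proof avoids any absolute computation of $h_2(\KK)$. It applies the ambiguous class number formula (Lemma \ref{ambiguous class number formula}) not to $\KK/k_1$ but to $k_i/F$, where $F$ is the quadratic subfield of odd class number ($F=\QQ(\sqrt2),\QQ(\sqrt{\pt}),\QQ(\sqrt{\po})$ for $i=3,5,6$); with $t_{k_i/F}=2$ this gives $r_2(\cl(k_i))=1-e_{k_i/F}\le1$, and since $\KK/k_i$ is unramified (Remark \ref{unramified}), $h_2(k_i)$ is even, forcing $\cl_2(k_i)$ to be cyclic and nontrivial. Class field theory then gives, for an unramified quadratic extension of a field with cyclic $2$-class group, $h_2(\KK)=\tfrac12h_2(k_i)$ simultaneously for $i=3,5,6$, which yields both remaining equalities without ever deciding whether $h_2(\KK)$ equals $1$ or $2$ (equivalently whether $q(k_3)$ equals $1$ or $2$). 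Your genus computation on $\KK/k_1$ is not wasted, but the honest conclusion from it is only $r_2(\cl(\KK))=1-e_{\KK/k_1}\le1$, i.e.\ that $\cl_2(\KK)$ is cyclic --- not that it is trivial.
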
	
 	\begin{proof}Let $i\in \{3,5,6\}$. Put $F=\QQ(\sqrt{2})$, $\QQ(\sqrt{p_2})$ or $\QQ(\sqrt{p_1})$ according to whether $i=3$, $5$ or $6$. 
 		Notice that the class number of $ F$ is odd (cf. Lemma \ref{class numbers of quadratic field} (1)), then by Lemma \ref{ambiguous class number formula}, we have  $r_2(\cl(k_i))=	t_{k_i/F}-1-e_{k_i/F}$. From our conditions, we get 	$t_{k_i/F}=2$. Therefore, 	$r_2(\cl(k_i))=	1-e_{k_i/F}$. Noting that $\KK$ is an unramified quadratic extension of $k_i$, then the class number of $k_i$ is even. Thus necessarily $e_{k_i/F}=0$ and so the $\cl_2(k_i)$ is cyclic (cf.  Lemmas \ref{wada's f.} and \ref{class numbers of quadratic field} ).
 	\end{proof}	
 	\begin{remark}\label{lemma2}
 		According to the above proof, if $\po $ and $ \pt$ be two primes such that $\po \equiv \pt\equiv 5 \pmod8$    and $\genfrac(){}{0}{\po}{\pt} = 1$, then 
 		$h_2(\KK)=\frac 12h_2(k_3)$.
 	\end{remark}

 	\begin{corollary}\label{sqrtcnd}
 		Keep the same hypothesis as in the above lemma. Then $h_2(k_i)=2q(k_i)$ and the following assertions are equivalent:
 		\begin{enumerate}[$1)$]
 			\item $ \sqrt{ \varepsilon_{2}\varepsilon_{\po\pt}\varepsilon_{2\po\pt}}\in k_3$, 
 			
 			\item  $ \sqrt{\varepsilon_{\pt}\varepsilon_{2\po}\varepsilon_{2\po\pt}}\in k_5$,
 			
 			\item  $\sqrt{  \varepsilon_{\po}\varepsilon_{2\pt}\varepsilon_{2\po\pt}}\in k_6$,
 			\item $\sqrt{\varepsilon_{2 \po}    \varepsilon_{2 \pt}    \varepsilon_{  \po\pt}}\in k_7$.
 		\end{enumerate}
 	\end{corollary}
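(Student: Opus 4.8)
The statement has two independent parts: the index identity $h_2(k_i)=2q(k_i)$ and the fourfold equivalence. For the identity I would simply feed each biquadratic field $k_i$, $i\in\{3,5,6,7\}$, into Wada's formula (Lemma \ref{wada's f.}), for which $n=2$ and $v=2$. Under the hypotheses $\po\equiv\pt\equiv5\pmod8$ and $\left(\frac{\po}{\pt}\right)=-1$, Lemma \ref{class numbers of quadratic field} yields $h_2(2)=h_2(\po)=h_2(\pt)=1$, $h_2(2\po)=h_2(2\pt)=h_2(\po\pt)=2$ and $h_2(2\po\pt)=4$. Listing the three quadratic subfields of each $k_i$ one checks that the product of their $2$-class numbers is $1\cdot2\cdot4=8$ for $k_3,k_5,k_6$ and $2\cdot2\cdot2=8$ for $k_7$; hence $h_2(k_i)=\frac1{2^2}q(k_i)\cdot8=2q(k_i)$ in all four cases. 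This part is routine.

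For the equivalences, write $A_3=\varepsilon_2\varepsilon_{\po\pt}\varepsilon_{2\po\pt}$, $A_5=\varepsilon_{\pt}\varepsilon_{2\po}\varepsilon_{2\po\pt}$, $A_6=\varepsilon_{\po}\varepsilon_{2\pt}\varepsilon_{2\po\pt}$ and $A_7=\varepsilon_{2\po}\varepsilon_{2\pt}\varepsilon_{\po\pt}$, so that condition $(j)$ reads $\sqrt{A_j}\in k_j$. By Lemma \ref{norms} all seven fundamental units have norm $-1$. Thus $k_3$ falls under case $1)$ of Lemma \ref{lmunit} and $k_5,k_6$ under case $1)$ of Lemma \ref{biquadunutk6} (with $\po\leftrightarrow\pt$ for $k_5$); in each instance the F.S.U. is either $\{\dots,\sqrt{A_i}\}$ or $\{\dots,\varepsilon_{2\po\pt}\}$, so $(i)$ holds exactly when $q(k_i)=2$. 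Since Lemma \ref{equalies} gives $h_2(k_3)=h_2(k_5)=h_2(k_6)$, the index identity above forces $q(k_3)=q(k_5)=q(k_6)$, whence $(1)\Leftrightarrow(2)\Leftrightarrow(3)$.

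It remains to bring in $k_7$. Running the same Wada analysis on $k_7$ (whose three subfield units are again all of norm $-1$) shows $(4)\Leftrightarrow q(k_7)=2$, so everything reduces to proving $q(k_7)=q(k_3)$. To link $k_7$ to the rest I would work inside $\KK$: assuming $(1)$, hence $(2)$ and $(3)$, we have $\sqrt{A_5},\sqrt{A_6}\in\KK$, so $\sqrt{A_5A_6}\in\KK$; and from the identity $A_5A_6\cdot(\varepsilon_{\po}\varepsilon_{\pt}\varepsilon_{\po\pt})=(\varepsilon_{\po}\varepsilon_{\pt}\varepsilon_{2\po\pt})^2A_7$ together with $\sqrt{\varepsilon_{\po}\varepsilon_{\pt}\varepsilon_{\po\pt}}\in\KK$ from Lemma \ref{squaretest}$(2)$, one deduces $\sqrt{A_7}\in\KK$. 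The reverse implication runs the same chain backward.

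The \textbf{main obstacle} is the descent from $\KK$ to $k_7$. Because $\KK=k_7(\sqrt2)$ and $A_7\in k_7$, the membership $\sqrt{A_7}\in\KK$ forces exactly one of $A_7$, $2A_7$ to be a square in $k_7$ (not both, since $\sqrt2\notin k_7$); condition $(4)$ is precisely the case $A_7\in(k_7^{\times})^2$, so the real work is excluding $2A_7\in(k_7^{\times})^2$. This cannot be settled by a sign count, as both $A_7$ and $2A_7$ are totally positive, nor by the ambiguous class number formula, since every quadratic subfield of $k_7$ has even class number and Lemma \ref{ambiguous class number formula} does not apply (this is exactly why $k_7$ is absent from Lemma \ref{equalies}). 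The plan is to resolve it by producing the explicit shape of $\sqrt{A_7}$ through the $\ZZ[i]$-factorization technique used in the proof of Lemma \ref{squaretest}, or equivalently by computing the sign of $(\tau_1\tau_2\tau_3)(\sqrt{A_7})$ in the spirit of Lemma \ref{calcul}; this sign decides whether $\sqrt{A_7}$ lies in $k_7$ or only in $\KK\setminus k_7$, and pinning it down completes the equivalence $(4)\Leftrightarrow(1)$.
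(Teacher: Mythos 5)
Your treatment of the identity $h_2(k_i)=2q(k_i)$ and of the equivalence $(1)\Leftrightarrow(2)\Leftrightarrow(3)$ is correct and is essentially the paper's own argument: Wada's formula (Lemma \ref{wada's f.}) with the class numbers from Lemma \ref{class numbers of quadratic field}, the norm conditions from Lemma \ref{norms} placing $k_3$, $k_5$, $k_6$ in case $1)$ of Lemmas \ref{lmunit} and \ref{biquadunutk6} (so that membership of the square root is the same as $q(k_i)=2$), and the equality $h_2(k_3)=h_2(k_5)=h_2(k_6)$ from Lemma \ref{equalies}.

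The equivalence with $(4)$, however, is where your proposal has a genuine gap, and you say so yourself: after reducing $(4)$ to $q(k_7)=2$, your route through $\KK$ only yields $\sqrt{A_7}\in\KK$ (where $A_7=\varepsilon_{2\po}\varepsilon_{2\pt}\varepsilon_{\po\pt}$), and you are left with the unresolved dichotomy that either $A_7$ or $2A_7$ is a square in $k_7$; for this you offer only a plan ($\ZZ[i]$-factorizations, signs in the spirit of Lemma \ref{calcul}), not a proof. The backward direction also does not simply ``run the chain backward'': from $\sqrt{A_7}\in\KK$ and $\sqrt{\varepsilon_{\po}\varepsilon_{\pt}\varepsilon_{\po\pt}}\in\KK$ you only get $\sqrt{A_5A_6}\in\KK$, and this does not split into $\sqrt{A_5}\in\KK$ and $\sqrt{A_6}\in\KK$ separately. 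The paper closes this step with an observation you dismissed too quickly when you declared class-number arguments unavailable for $k_7$: by Remark \ref{unramified}, $\KK/k_7$ is \emph{unramified}, and $h_2(k_7)=2q(k_7)$ holds by the same Wada computation. Hence, by class field theory applied to the unramified quadratic extension $\KK/k_7$ (the norm map on $2$-class groups has image of index $2$, so $h_2(k_7)\le 2h_2(\KK)$; conversely, if $h_2(k_7)=2$ then $\cl_2(k_7)$ is cyclic and $\KK$ is its Hilbert $2$-class field, whence $h_2(\KK)=1$), one has $q(k_7)=1$ if and only if $h_2(\KK)=1$; and Lemma \ref{equalies} gives $h_2(\KK)=q(k_3)$, so $q(k_7)=1\Leftrightarrow q(k_3)=1$, i.e. $(1)\Leftrightarrow(4)$. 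No explicit determination of $\sqrt{A_7}$ is needed: the descent obstruction you isolated is exactly what the unramifiedness of $\KK/k_7$ resolves.
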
			
 	\begin{proof} Let $i\in \{3,5,6\}$.   Using Lemma  \ref{wada's f.} and 
 		Lemma \ref{class numbers of quadratic field}, one can easily check that	$h_2(k_i)=2q(k_i)$. As by the previous lemma we have $h_2(k_3)=h_2(k_5)=h_2(k_6)$, then 
 		$q(k_3)=2$ if and only if $q(k_5)=2$, if and only if $q(k_6)=2$. So   by Lemmas \ref{lmunit} and \ref{biquadunutk6}, the first three items are equivalent. Let us now check that $1)$ and $4)$ are equivalent. By Lemma  \ref{wada's f.} and 
 		Lemma \ref{class numbers of quadratic field}, we deduce that $h_2(k_7)=2q(k_7)$. As $\KK/k_7$ is unramified, then by class field theory, $q(k_7)=1$ if and only if $h_2(\KK)=1$, which is equivalent to $q(k_3)=1$ (cf. Lemma \ref{equalies}). So the result.
 	\end{proof}

 	\begin{remark}\label{forq4} As $N(\varepsilon_{\po})=N(\varepsilon_{\pt})=N(\varepsilon_{\po\pt})=-1$, then according to \cite{Ku-43} a F.S.U of $k_4=\QQ(\sqrt{\po},\sqrt{\pt})$ is one of the following $\{\varepsilon_{\po},\varepsilon_{\pt},\varepsilon_{\po\pt} \}$ or $\{\varepsilon_{\po},\varepsilon_{\pt},\sqrt{\varepsilon_{\po}\varepsilon_{\pt}\varepsilon_{\po\pt}}\}$. 
 		Since the 2-class number of $\QQ(\sqrt{\po\pt})$ equal to 2 and $k_4$ is an unaramified quadratic extension of $\QQ(\sqrt{\po\pt})$, we have $h(k_4)$ is odd.\\
 		On the other hand by Lemma   \ref{class numbers of quadratic field} we have :
 		\begin{eqnarray}
 			1=h_2(k_4)=\frac{1}{4}q(k_4)h_2(\po)h_2(\pt)h_2(\po\pt),
 		\end{eqnarray}
 		so $q(k_4)=2$.  It follows that $\{\varepsilon_{\po},\varepsilon_{\pt},\sqrt{\varepsilon_{\po}\varepsilon_{\pt}\varepsilon_{\po\pt}}\}$ is a F.S.U of $k_4$.
 	\end{remark}

 	\begin{theorem}\label{MT1A} Let $\po \equiv \pt\equiv 5 \pmod8$ be two prime numbers and   $\KK=\QQ(\sqrt 2, \sqrt{\po}, \sqrt{\pt} )$. We have:
 		\begin{enumerate}[\rm $1)$]
 			\item  If $N(\varepsilon_{\po\pt})=1$,  then
 			\begin{enumerate}[\rm $\bullet$]
 				
 				\item The unit group of $\KK$ is :
 				$$E_{\KK}=\langle -1, \varepsilon_2,\varepsilon_{\po}, \varepsilon_{\pt}, \sqrt{\varepsilon_2\varepsilon_{\po}\varepsilon_{2\po}},\sqrt{\varepsilon_2\varepsilon_{\pt}\varepsilon_{2\pt}}, \sqrt{\varepsilon_{\po\pt}},\sqrt{\varepsilon_2\varepsilon_{\po}\varepsilon_{\pt}\varepsilon_{2\po\pt}} \rangle.$$
 				\item The $2$-class number of  $\KK$ is $\frac{1}{2}h_2(\po\pt)$
 			\end{enumerate}
 			
 			\item If   $N(\varepsilon_{\po\pt})=-1$ and $\varepsilon_{2}\varepsilon_{\po\pt}\varepsilon_{2\po\pt}$ is \textbf{not} a square in $k_3$,  then
 			
 			\begin{enumerate}[\rm $\bullet$]
 				\item  The unit group of $\KK$ is :
 				$$E_{\KK}=\langle -1,  \varepsilon_{2}, \varepsilon_{\po},   \varepsilon_{\pt}, \sqrt{\varepsilon_{2}\varepsilon_{\po}\varepsilon_{2\po}}, 
 				\sqrt{\varepsilon_{2}\varepsilon_{\pt} \varepsilon_{2\pt}}, \sqrt{\varepsilon_{2}\varepsilon_{\po\pt}\varepsilon_{2\po\pt}},\sqrt{\varepsilon_{\po}\varepsilon_{\pt}\varepsilon_{\po\pt}}  \rangle.$$
 				
 				\item  The $2$-class number of  $\KK$ is   $\frac{1}{2}h_2(\po\pt)$.
 			\end{enumerate}

 			\item If  $N(\varepsilon_{\po\pt})=-1$ and $\varepsilon_{2}\varepsilon_{\po\pt}\varepsilon_{2\po\pt}$ is a square in $k_3$,  then
 			\begin{enumerate}[\rm $\bullet$]
 				
 				\item The unit group of $\KK$ is :
 				\begin{eqnarray*}
 					E_{\KK}=\langle -1,  \varepsilon_{2}, && \varepsilon_{\po},\varepsilon_{\pt}, \sqrt{\varepsilon_{2}\varepsilon_{\po}\varepsilon_{2\po}},\sqrt{\varepsilon_{2}\varepsilon_{\pt}\varepsilon_{2\pt}}, \sqrt{\varepsilon_{\po}\varepsilon_{\pt}\varepsilon_{\po\pt}},\\
 					&& \sqrt{\varepsilon_{2}^a\varepsilon_{\po}^b \varepsilon_{\pt}^c \varepsilon_{\po\pt}^d \sqrt{\varepsilon_{2}\varepsilon_{\po}\varepsilon_{2\po}}\sqrt{\varepsilon_{2}\varepsilon_{\pt}\varepsilon_{2\pt}}
 						\sqrt{\varepsilon_{2}\varepsilon_{\po\pt}\varepsilon_{2\po\pt} }}   \rangle,
 				\end{eqnarray*}
 				for $a$,$b$,$c$ and $d$ satisfying the equalities (8)-(14) shown in the proof.
 				\item The $2$-class class number of  $\KK$ is    $h_2(p_1p_2)$.
 			\end{enumerate}
 		\end{enumerate}
 	\end{theorem}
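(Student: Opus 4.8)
The plan is to run Wada's procedure (recalled on p.~\pageref{algo wada}) relative to the three biquadratic subfields $k_1,k_2,k_3$ of Figure~\ref{fig:I}, having first fixed the relevant norms and the fundamental systems of units of these subfields, and then to read off the $2$-class number from the auxiliary results of this section. Since $\po\equiv\pt\equiv 5\pmod 8$ we have $\genfrac(){}{0}{2}{\po}=\genfrac(){}{0}{2}{\pt}=-1$, so Lemma~\ref{norms} gives $N(\varepsilon_{\po})=N(\varepsilon_{\pt})=N(\varepsilon_{2\po})=N(\varepsilon_{2\pt})=-1$; as two of the three symbols in Lemma~\ref{norms}(4) equal $-1$, also $N(\varepsilon_{2\po\pt})=-1$ in all three cases. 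Lemma~\ref{fork1k2}(1) then yields the F.S.U. $\{\varepsilon_2,\varepsilon_{\po},\sqrt{\varepsilon_2\varepsilon_{\po}\varepsilon_{2\po}}\}$ of $k_1$ and $\{\varepsilon_2,\varepsilon_{\pt},\sqrt{\varepsilon_2\varepsilon_{\pt}\varepsilon_{2\pt}}\}$ of $k_2$, while Lemma~\ref{lmunit} describes $E_{k_3}$: Case~$1$ ($N(\varepsilon_{\po\pt})=-N(\varepsilon_{2\po\pt})=1$) is item~2 and gives $\{\varepsilon_2,\varepsilon_{\po\pt},\varepsilon_{2\po\pt}\}$, whereas Cases~$2$--$3$ ($N(\varepsilon_{\po\pt})=N(\varepsilon_{2\po\pt})=-1$) fall under item~1 and give $\{\varepsilon_2,\varepsilon_{\po\pt},\varepsilon_{2\po\pt}\}$ or $\{\varepsilon_2,\varepsilon_{\po\pt},\sqrt{\varepsilon_2\varepsilon_{\po\pt}\varepsilon_{2\po\pt}}\}$ exactly according to whether $\varepsilon_2\varepsilon_{\po\pt}\varepsilon_{2\po\pt}$ is a square in $k_3$.

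\emph{Candidate square roots.} By Wada's description, $E_{\KK}$ is generated by $-1$, the union of the three systems above, and the square roots of those elements of $E_{k_1}E_{k_2}E_{k_3}$ that are squares in $\KK$. Lemma~\ref{squaretest} supplies $\sqrt{\varepsilon_2\varepsilon_{\po}\varepsilon_{\pt}\varepsilon_{2\po\pt}}\in\KK$ (always, as $N(\varepsilon_{2\po\pt})=-1$) and $\sqrt{\varepsilon_{\po}\varepsilon_{\pt}\varepsilon_{\po\pt}}\in\KK$ in Cases~$2$--$3$ (where $N(\varepsilon_{\po\pt})=-1$). In Case~$1$ the substitute is $\sqrt{\varepsilon_{\po\pt}}\in\KK$, which follows from $N(\varepsilon_{\po\pt})=1$: writing $\varepsilon_{\po\pt}=x+y\sqrt{\po\pt}$ one has $\sqrt{2\varepsilon_{\po\pt}}=\sqrt{x+1}+\sqrt{x-1}$ with $(x+1)(x-1)=\po\pt y^2$, so the squarefree parts of $x\pm1$ are products of $2,\po,\pt$, whence $\sqrt{x\pm1}\in\KK$ and, since $\sqrt2\in\KK$, also $\sqrt{\varepsilon_{\po\pt}}\in\KK$. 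Together with the systems above, these already exhibit the seven units displayed in the theorem in Cases~$1$ and~$2$.

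\emph{The elimination (main obstacle).} The crux is to show that the displayed set is a \emph{full} F.S.U., i.e. that no product of the base units (the union of the three systems) becomes a square in $\KK$ beyond those already found. I would test squareness of each product $\prod(\cdots)^{e}$, $e\in\{0,1\}$, by applying the generators $\tau_1,\tau_2,\tau_3$ of $\mathrm{Gal}(\KK/\QQ)$ and matching the resulting signs against the norm data above; the explicit action of the operators $1+\tau_i$ on the radicals $\sqrt{\varepsilon_{2p}}$ provided by Lemma~\ref{calcul} reduces each test to a finite sign computation, while Lemma~\ref{tettt} discards the degenerate radicals. This shows that in Cases~$1$ and~$2$ exactly four independent square roots occur, so $q(\KK)=2^4$, whereas in Case~$3$ one further relation appears, namely that $\varepsilon_2^{a}\varepsilon_{\po}^{b}\varepsilon_{\pt}^{c}\varepsilon_{\po\pt}^{d}\sqrt{\varepsilon_2\varepsilon_{\po}\varepsilon_{2\po}}\sqrt{\varepsilon_2\varepsilon_{\pt}\varepsilon_{2\pt}}\sqrt{\varepsilon_2\varepsilon_{\po\pt}\varepsilon_{2\po\pt}}$ is a square in $\KK$, raising the index to $q(\KK)=2^5$. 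Pinning down the admissible exponents $a,b,c,d$ is precisely the system (8)--(14) quoted in the statement, and teasing it out of the sign relations is the most delicate point of the whole argument.

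\emph{The $2$-class number.} For $h_2(\KK)$ I would use the local results of this section. In Cases~$2$--$3$, where $\genfrac(){}{0}{\po}{\pt}=-1$, Lemma~\ref{equalies} and Corollary~\ref{sqrtcnd} give $h_2(\KK)=q(k_3)$: this equals $1$ when $\varepsilon_2\varepsilon_{\po\pt}\varepsilon_{2\po\pt}$ is not a square in $k_3$ (Case~$2$, so $h_2(\KK)=\tfrac12 h_2(\po\pt)=1$) and equals $2$ when it is (Case~$3$, so $h_2(\KK)=h_2(\po\pt)=2$). In Case~$1$, where $\genfrac(){}{0}{\po}{\pt}=1$, Remark~\ref{lemma2} gives $h_2(\KK)=\tfrac12 h_2(k_3)$, and Lemma~\ref{wada's f.} applied to the biquadratic field $k_3$ (with $q(k_3)=1$ and $h_2(2\po\pt)=4$) gives $h_2(k_3)=h_2(\po\pt)$, hence $h_2(\KK)=\tfrac12 h_2(\po\pt)$. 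Equivalently, all three values follow from Lemma~\ref{wada's f.} for $\KK$ itself, with $n=3$, $v=9$ and $\prod_{i=1}^{7}h_2(k_i)=16\,h_2(\po\pt)$, combined with the value of $q(\KK)$ found above.
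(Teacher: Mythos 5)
Your setup, your treatment of Cases $1)$ and $2)$, and your class-number computations follow essentially the same route as the paper (Wada's method relative to $k_1,k_2,k_3$, norm-map elimination, Lemma~\ref{squaretest}, Kuroda's formula), but there is a genuine gap in Case $3)$, at the point where you assert that ``one further relation appears, namely that $\varepsilon_{2}^a\varepsilon_{\po}^b \varepsilon_{\pt}^c \varepsilon_{\po\pt}^d \sqrt{\varepsilon_{2}\varepsilon_{\po}\varepsilon_{2\po}}\sqrt{\varepsilon_{2}\varepsilon_{\pt}\varepsilon_{2\pt}}\sqrt{\varepsilon_{2}\varepsilon_{\po\pt}\varepsilon_{2\po\pt}}$ is a square in $\KK$, raising the index to $q(\KK)=2^5$,'' presenting this as an output of the sign computation. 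The norm-map tests furnish only \emph{necessary} conditions: they eliminate candidates, but they can never certify that a surviving candidate actually is a square in $\KK$. This is exactly why, in Cases $1)$ and $2)$, you (like the paper) had to invoke Lemma~\ref{squaretest}, Remark~\ref{remarkzekh} and the explicit extraction of $\sqrt{\varepsilon_{\po\pt}}$ to confirm the survivors; for the nested radical of Case $3)$ no such explicit construction is available, so the elimination alone leaves the existence of the eighth generator --- hence whether $q(\KK)=2^4$ or $2^5$ --- undecided. (That surviving the elimination does not imply squareness is confirmed by the paper's own concluding remarks: in the analogous situations of Theorem~\ref{MT3}~$(3)(b)$, $(7)(b)$, $(9)$ and Theorem~\ref{MT4}~$(1)(b)$ the same ambiguity remains unresolved.)

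The paper closes this gap by running the logic in the opposite direction from yours: Lemma~\ref{equalies} (under the assumption $\genfrac(){}{0}{\po}{\pt}=-1$, which the paper adds at this point) gives $h_2(\KK)=q(k_3)=2$ because $\varepsilon_{2}\varepsilon_{\po\pt}\varepsilon_{2\po\pt}$ is a square in $k_3$; Kuroda's formula (Lemma~\ref{wada's f.}) then forces $q(\KK)=2^4h_2(\KK)=2^5$, and only this compels the surviving equation to have a solution in $\KK$. You actually have both ingredients in your final paragraph, but you chain them backwards --- feeding the (unproven) value $q(\KK)=2^5$ into Kuroda's formula to recover $h_2(\KK)$ --- so as written the argument is incomplete, and patching it via your ``equivalently'' sentence would make it circular. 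Two smaller points: your phrase ``Cases $2$--$3$, where $\genfrac(){}{0}{\po}{\pt}=-1$'' is an unjustified inference, since $N(\varepsilon_{\po\pt})=-1$ does not imply $\genfrac(){}{0}{\po}{\pt}=-1$ (Lemma~\ref{norms}~$(3)$ goes only one way; the paper itself needs this as an extra hypothesis, cf.\ the remark following the theorem); and Lemma~\ref{calcul} is not actually usable here, as it requires $p\equiv 1\pmod 8$ and $N(\varepsilon_{2p})=1$, whereas in this theorem $\po\equiv\pt\equiv 5 \pmod 8$ and all the radicals are of the form $\sqrt{\varepsilon_{2}\varepsilon_{d}\varepsilon_{2d}}$, whose conjugates the paper controls by generic sign parameters instead.
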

 	\begin{proof}
 		We shall use    the preliminaries and the method presented in  Section \ref{sec2prep}. Therefore, we need the unit groups of the three intermediate fields $k_i=\mathbb{Q}(\sqrt{2},\sqrt{p_i})$,  for $i\in \{1,2\}$ and $k_3=\mathbb{Q}(\sqrt{2},\sqrt{\po\pt})$ (cf. Figure \ref{fig:I}).
 		Notice that under our assumptions, we have $N(\varepsilon_{2\po})=N(\varepsilon_{\pt})=N(\varepsilon_{\po})=N(\varepsilon_{2\pt})=N(\varepsilon_{2\po\pt}) =-1$ (cf. Lemma \ref{norms}). Thus  by Lemma \ref{fork1k2}, for $i\in\{1,2\}$,  
 		a F.S.U of $k_i=\mathbb{Q}(\sqrt{2},\sqrt{p_i})$,  for $i\in \{1,2\}$, is given by $\{\varepsilon_{2}, \varepsilon_{p_i},	\sqrt{\varepsilon_{2}\varepsilon_{p_i}\varepsilon_{2p_i}}\}$. Furthermore, by Lemma 
 		\ref{lmunit} (1)-(2),  a F.S.U of $k_3=\mathbb{Q}(\sqrt{2},\sqrt{\po\pt})$ is : 
 		\begin{enumerate}[$\star$]
 			\item   $\{\varepsilon_{2}, \varepsilon_{\po\pt},	\sqrt{\varepsilon_{2}\varepsilon_{\po\pt}\varepsilon_{2\po\pt}}\}$,
 			if $N(\varepsilon_{ \po\pt}) =-1$ and $\varepsilon_{2}\varepsilon_{\po\pt}\varepsilon_{2\po\pt}$ is a square in $k_3$, 
 			\item   $\{\varepsilon_{2}, \varepsilon_{\po\pt},\varepsilon_{2\po\pt}\}$ else.
 		\end{enumerate}
 		Let us start by proving the first item. 
 		\begin{enumerate}[$1)$]
 			\item Assume that $N(\varepsilon_{\po\pt})=1$.	We have: 	$$E_{k_1}E_{k_2}E_{k_3}=\langle-1,  \varepsilon_{2}, \varepsilon_{\po},\varepsilon_{\pt}, \varepsilon_{\po\pt} , \varepsilon_{2\po\pt}, \sqrt{\varepsilon_{2}\varepsilon_{\po}\varepsilon_{2\po}},\sqrt{\varepsilon_{2}\varepsilon_{\pt}\varepsilon_{2\pt}}\rangle.$$	
 			Let us find the elements $\chi$   of $\KK$ which are the  square root of an element of $E_{k_1}E_{k_2}E_{k_3}$. Therefore, we can assume that
 			$$\chi^2=\varepsilon_{2}^a\varepsilon_{\po}^b \varepsilon_{\pt}^c\varepsilon_{\po\pt}^d\varepsilon_{2\po\pt}^e\sqrt{\varepsilon_{2}\varepsilon_{\po}\varepsilon_{2\po}}^f\sqrt{\varepsilon_{2}\varepsilon_{\pt}\varepsilon_{2\pt}}^g
 			,$$
 			where $a, b, c, d, e, f$ and $g$ are in $\{0, 1\}$. Notice that now,  we transformed the problem of the computation of the unit group of $\KK$ into the problem of 
 			whether the above family of equations admits solutions  in $\KK$ or not.
 			We shall use the norm maps to eliminate  the equations  which do not occur.\\  
 			\noindent\ding{229} By   applying the norm $N_{\KK/k_1}=1+\tau_3$, we get:
 			\begin{eqnarray*}
 				N_{\KK/k_1}(\chi^2)&=& \varepsilon_{2}^{2a} \cdot\varepsilon_{\po}^{2b}\cdot (-1)^c\cdot (1)^d\cdot (-1)^e\cdot (\varepsilon_{2}\varepsilon_{\po} \varepsilon_{2\po} )^f   (-1)^{a_1 g }\cdot\varepsilon_{2}^g \\
 				&=& \varepsilon_{2}^{2a} \varepsilon_{\po}^{2b}(\varepsilon_{2}\varepsilon_{\po} \varepsilon_{2\po} )^f (-1)^{c+e+b_1 g}\varepsilon_{2}^g,
 			\end{eqnarray*}
 			with $b_1 \in \{0,1\}$. We conclude that 
 			$c+e+b_1 g\equiv 0 \pmod 2$ and $g=0$. Thus, $c=e$ and 
 			$$\chi^2=\varepsilon_{2}^a\varepsilon_{\po}^b \varepsilon_{\pt}^c\varepsilon_{\po\pt}^d\varepsilon_{2\po\pt}^c\sqrt{\varepsilon_{2}\varepsilon_{\po}\varepsilon_{2\po}}^f 
 			.$$
 			\noindent\ding{229} By   applying the norm $N_{\KK/k_2}=1+\tau_2$, we get :
 			\begin{eqnarray*}
 				N_{\KK/k_1}(\chi^2)&=& \varepsilon_{2}^{2a}\cdot (-1)^b \cdot\varepsilon_{\pt}^{2c} \cdot(1)^d\cdot (-1)^c\cdot    (-1)^{c_1 f }\cdot\varepsilon_{2}^f \\
 				&=& \varepsilon_{2}^{2a} \varepsilon_{\pt}^{2c}(-1)^{b+c+c_1 f}\varepsilon_{2}^f,
 			\end{eqnarray*}
 			with $c_1 \in \{0,1\}$. Thus, 
 			$b+c+c_1 f\equiv 0 \pmod 2$ and $f=0$. Therefore $b=c$ and 
 			$$ \chi^2=\varepsilon_{2}^{a}\varepsilon_{\po}^{b}\varepsilon_{\pt}^{b}\varepsilon_{\po\pt}^{d}\varepsilon_{2\po\pt}^{b}     .$$
 			
 			\noindent\ding{229} Now let us   apply the norm $N_{\KK/k_5}=1+\tau_1\tau_2$. We have:
 			\begin{eqnarray*}
 				N_{\KK/k_5}(\chi^2)&=& (-1)^{a} \cdot(-1)^b\cdot\varepsilon_{\pt}^{2b}\cdot  (1)^d \cdot\varepsilon_{2\po\pt}^{2b}.\\
 				&=& \varepsilon_{\pt}^{2b}\varepsilon_{2\po\pt}^{2b}(-1)^{a+b}
 			\end{eqnarray*}
 			Thus $a=b$. Hence
 			$$ \chi^2=(\varepsilon_{2}\varepsilon_{\po}\varepsilon_{\pt}\varepsilon_{2\po\pt})^{a} \varepsilon_{\po\pt}^{d}.    $$
 			Notice that according to the proof of \cite[Proposition 2.5]{AzZektaous}, $\sqrt{\varepsilon_{\po\pt}}\in \KK$. So we can assume that
 			\begin{eqnarray*}
 				\chi^2=(\varepsilon_{2}\varepsilon_{\po}\varepsilon_{\pt}\varepsilon_{2\po\pt})^{a}  . 
 			\end{eqnarray*}  
 			On the other hand, by Lemma \ref{squaretest}, we have  
 			$\sqrt{\varepsilon_{2}\varepsilon_{\po}\varepsilon_{\pt}\varepsilon_{2\po\pt}}\in \KK$. Thus we have the first item.
 			
 			\item Assume that  $N(\varepsilon_{  \po\pt})=-1$ and  $\varepsilon_{2}\varepsilon_{\po\pt}\varepsilon_{2\po\pt}$ is not a square in $k_3$. So we have:
 			$$E_{k_1}E_{k_2}E_{k_3}=\langle-1,  \varepsilon_{2}, \varepsilon_{\po},\varepsilon_{\pt}, \varepsilon_{\po\pt} ,\varepsilon_{2\po\pt},  \sqrt{\varepsilon_{2}\varepsilon_{\po}\varepsilon_{2\po}},\sqrt{\varepsilon_{2}\varepsilon_{\pt}\varepsilon_{2\pt}} \rangle.$$
 			
 			Following the    Wada's method, let us look for  the elements $\chi$  of $\KK$  (if any) which are the square root of an element  of  $E_{k_1}E_{k_2}E_{k_3}$. We  can assume that :
 			\begin{eqnarray}\label{chi1}
 				\chi^2=\varepsilon_{2}^a\varepsilon_{\po}^b \varepsilon_{\pt}^c\varepsilon_{\po\pt}^d \varepsilon_{2\po\pt}^e\sqrt{\varepsilon_{2}\varepsilon_{\po}\varepsilon_{2\po}}^f\sqrt{\varepsilon_{2}\varepsilon_{\pt}\varepsilon_{2\pt}}^g,	 
 			\end{eqnarray} 
 			where $a, b, c, d, e, f$ and $g$ are in $\{0, 1\}$. \\
 			\noindent\ding{229} Let us start by applying the norm $N_{\KK/k_1}=1+\tau_3$, we get:
 			\begin{eqnarray*}
 				N_{\KK/k_1}(\chi^2)&=& \varepsilon_{2}^{2a}\cdot \varepsilon_{\po}^{2b}\cdot (-1)^c\cdot (-1)^d\cdot (-1)^e\cdot (\varepsilon_{2}\varepsilon_{\po} \varepsilon_{2\po} )^f\cdot   (-1)^{\alpha g }\cdot\varepsilon_{2}^g, \\
 				&=& \varepsilon_{2}^{2a} \varepsilon_{\po}^{2b}(\varepsilon_{2}\varepsilon_{\po} \varepsilon_{2\po} )^f(-1)^{c+d+e+\alpha g} \varepsilon_{2}^g,
 			\end{eqnarray*}
 			with $\alpha \in \{0,1\}$, and then we conclude that $c+d+e+\alpha g \equiv 0  \pmod 2 $, and $g=0$ because  $\varepsilon_{2}\varepsilon_{\po} \varepsilon_{2\po}$ is a square in $
 			k_1$  and    $\varepsilon_{2}$ is not. Thus, $c+d+e \equiv 0 \pmod 2$ and
 			$$\chi^2=\varepsilon_{2}^a\varepsilon_{\po}^b \varepsilon_{\pt}^c\varepsilon_{\po\pt}^d \varepsilon_{2\po\pt}^e\sqrt{\varepsilon_{2}\varepsilon_{\po}\varepsilon_{2\po}}^f 
 			.$$
 			
 			\noindent\ding{229} Now we apply the norm $N_{\KK/k_6}=1+\tau_1 \tau_3$, we get:
 			\begin{eqnarray*}
 				N_{\KK/k_6}(\chi^2)&=& (-1)^{a}\cdot\varepsilon_{\po}^{2b}\cdot (-1)^{c}\cdot(-1)^{d}\cdot\varepsilon_{2\po\pt}^{2e}\cdot (-1)^{\beta f}\cdot \varepsilon_{\po}^f,\\
 				&=&\varepsilon_{\po}^{2b}  \varepsilon_{2\po\pt}^{2e} (-1)^{a+c+d+\beta f} \varepsilon_{\po}^f,
 			\end{eqnarray*}
 			for some $\beta\in\{0,1\}$. Therefore,  $a+c+d+\beta f \equiv 0 \pmod 2$  and $f= 0$, in fact $\varepsilon_{\po}$ is not a square in $k_6$. Thus, 
 			$$\chi^2=\varepsilon_{2}^a\varepsilon_{\po}^b \varepsilon_{\pt}^c\varepsilon_{\po\pt}^d \varepsilon_{2\po\pt}^e,$$
 			with  	$a+c+d \equiv 0 \pmod 2.$ \\
 			\noindent\ding{229} By applying  the norm   $N_{\KK/k_3}=1+\tau_2\tau_3$, we get:
 			\begin{eqnarray*}
 				N_{\KK/k_3}(\chi^2)&=&\varepsilon_{2}^{2a}\cdot \varepsilon_{\po\pt}^{2d}\cdot \varepsilon_{2\po\pt}^{2e}\cdot(-1)^{b+c}.
 			\end{eqnarray*}
 			So $b=c$. Therefore, we have:
 			$$\chi^2=\varepsilon_{2}^a\varepsilon_{\po}^b \varepsilon_{\pt}^b\varepsilon_{\po\pt}^d \varepsilon_{2\po\pt}^e.$$
 			\noindent\ding{229} Let us apply the norm $N_{\KK/k_4}=1+\tau_1$. We have:
 			\begin{eqnarray*}
 				N_{\KK/k_4}(\chi^2)&=& \varepsilon_{\po}^{2b}\cdot \varepsilon_{\pt}^{2b}\cdot\varepsilon_{2\po\pt}^{2d}\cdot(-1)^{a+e},
 			\end{eqnarray*}
 			which gives $a=e$. Thus,  
 			$$\chi^2=\varepsilon_{2}^a\varepsilon_{\po}^b \varepsilon_{\pt}^b\varepsilon_{\po\pt}^d \varepsilon_{2\po\pt}^a
 			,$$
 			with  	$a+b+d \equiv 0 \pmod 2.$  So
 			if $a=0$ we get  $b=d$, and if  $a=1$, then  ($b=0$ and $d=1$) or ($b=1$ and $d=0$). Therefore, we eliminated all the equations \eqref{chi1} except the following:
 			$$\chi^2=\varepsilon_{\po} \varepsilon_{\pt}\varepsilon_{\po\pt} ,  \qquad   \chi^2=\varepsilon_{2} \varepsilon_{\po\pt} \varepsilon_{2\po\pt} , \qquad  \chi^2=\varepsilon_{2}\varepsilon_{\po} \varepsilon_{\pt} \varepsilon_{2\po\pt}.$$
 			By Remark \ref{remarkzekh} and Lemma \ref{squaretest},  we have 
 			$\sqrt{\varepsilon_{\po} \varepsilon_{\pt}\varepsilon_{\po\pt}}$,  $\sqrt{\varepsilon_{2} \varepsilon_{\po\pt} \varepsilon_{2\po\pt}}$, $\sqrt{\varepsilon_{2}\varepsilon_{\po} \varepsilon_{\pt} \varepsilon_{2\po\pt}} \in \KK$. Notice that $\sqrt{\varepsilon_{2}\varepsilon_{\po} \varepsilon_{\pt} \varepsilon_{2\po\pt}}=\varepsilon_{\po\pt}^{-1}\sqrt{\varepsilon_{\po} \varepsilon_{\pt}\varepsilon_{\po\pt}} \sqrt{\varepsilon_{2} \varepsilon_{\po\pt} \varepsilon_{2\po\pt}}$.
 			Hence,
 			$$E_{\KK}=\langle -1,  \varepsilon_{2}, \varepsilon_{\po},   \varepsilon_{\pt}, \sqrt{\varepsilon_{2}\varepsilon_{\po}\varepsilon_{2\po}}, 
 			\sqrt{\varepsilon_{2}\varepsilon_{\pt} \varepsilon_{2\pt}}, \sqrt{\varepsilon_{2}\varepsilon_{\po\pt}\varepsilon_{2\po\pt}},\sqrt{\varepsilon_{\po}\varepsilon_{\pt}\varepsilon_{\po\pt}}  \rangle.$$
 			And we still have $q(\KK)=2^4$.

 			\item Now assume that $N(\varepsilon_{  \po\pt})=-1$ and $\varepsilon_{2}\varepsilon_{\po\pt}\varepsilon_{2\po\pt}$ is a  square in $k_3$. We assume that $\left(\frac{p_1}{p_2}\right) =-1$.  So we have:
 			$$E_{k_1}E_{k_2}E_{k_3}=\langle-1,  \varepsilon_{2}, \varepsilon_{\po},\varepsilon_{\pt}, \varepsilon_{\po\pt} , \sqrt{\varepsilon_{2}\varepsilon_{\po}\varepsilon_{2\po}},\sqrt{\varepsilon_{2}\varepsilon_{\pt}\varepsilon_{2\pt}},\sqrt{\varepsilon_{2}\varepsilon_{\po\pt}\varepsilon_{2\po\pt} }\rangle.$$	
 			Proceeding as in the proof of the first item, we  consider
 			$\chi$   an element of $\KK$ of the form:
 			$$\chi^2=\varepsilon_{2}^a\varepsilon_{\po}^b \varepsilon_{\pt}^c\varepsilon_{\po\pt}^d\sqrt{\varepsilon_{2}\varepsilon_{\po}\varepsilon_{2\po}}^e\sqrt{\varepsilon_{2}\varepsilon_{\pt}\varepsilon_{2\pt}}^f
 			\sqrt{\varepsilon_{2}\varepsilon_{\po\pt}\varepsilon_{2\po\pt} }^g,$$
 			where $a, b, c, d, e, f$ and $g$ are in $\{0, 1\}$. 
 			Let us eliminate the  equations that do not occur.  
 			Let   $\sqrt{\varepsilon_{2}\varepsilon_{p_i}\varepsilon_{2p_i}}=\alpha_i+\beta_i \sqrt{2} +\gamma_i \sqrt{p_i} +\delta_i \sqrt{2p_i} $ (for $i=1$, $2$) and
 			$ \sqrt{\varepsilon_{2}\varepsilon_{\po\pt}\varepsilon_{2\po\pt} }=\alpha+\beta\sqrt{2} +\gamma\sqrt{\po\pt}+\delta\sqrt{2\po\pt}$, where 
 			$\alpha_i$, $\beta_i$, $\gamma_i$,  $\delta_i$, $\alpha$, $\beta$, $\gamma$ and   $\delta$ are rational numbers. Thus, we have the following table which collects some values of norm maps:
 			%
 			
 			\begin{center}
 				\begin{tabular}{|c|c|c|c|}
 					\hline
 					$\varepsilon$& $\sqrt{\varepsilon_{2}\varepsilon_{\po}\varepsilon_{2\po}}$&$\sqrt{\varepsilon_{2}\varepsilon_{\pt}\varepsilon_{2\pt}}$&$\sqrt{\varepsilon_{2}\varepsilon_{\po\pt}\varepsilon_{2\po\pt}}$ \\ 
 					\hline
 					$\varepsilon^{1+\tau_2}$ & $(-1)^{a_1}\varepsilon_{2}$ & $\varepsilon_{2}\varepsilon_{\pt}\varepsilon_{2\pt} $   &   $(-1)^{a_2}\varepsilon_{2}$   \\ 
 					\hline
 					$\varepsilon^{1+\tau_2\tau_3}$ & $(-1)^{a_1}\varepsilon_{2}$ & $(-1)^{b_2}\varepsilon_{2}$    &   $\varepsilon_{2}\varepsilon_{\po\pt}\varepsilon_{2\po\pt} $  \\ 
 					\hline
 					$\varepsilon^{1+\tau_1}$ & $(-1)^{c_1}\varepsilon_{\po}$ & $(-1)^{c_2}\varepsilon_{\pt}$    & $(-1)^{c_3}\varepsilon_{\po\pt}$  \\ 
 					\hline
 					$\varepsilon^{1+\tau_1\tau_3}$ & $(-1)^{c_1}\varepsilon_{\po}$ & $(-1)^{d_2}\varepsilon_{2\pt}$    & $(-1)^{d_3}\varepsilon_{2\po\pt}$  \\ 
 					\hline
 					$\varepsilon^{1+\tau_3}$ & $\varepsilon_{2}\varepsilon_{\po}\varepsilon_{2\po}$ & $(-1)^{b_2}\varepsilon_{2}$    & $(-1)^{a_2}\varepsilon_{2}$  \\ 
 					\hline
 					$\varepsilon^{1+\tau_1\tau_2}$ & $(-1)^{f_1}\varepsilon_{2\po}$ & $(-1)^{c_2}\varepsilon_{\pt}$    & $(-1)^{d_3}\varepsilon_{2\po\pt}$  \\ 
 					\hline
 					$\varepsilon^{1+\tau_1\tau_2\tau_3}$ & $(-1)^{f_1}\varepsilon_{2\po}$ & $(-1)^{d_2 }\varepsilon_{2\pt}$    & $(-1)^{c_3}\varepsilon_{\po\pt}$  \\ 
 					\hline
 				\end{tabular} 
 			\end{center}
 			where the letters  over $(-1)$ are elements of $\{0,1\}.$

 			\noindent\ding{229}   Let us start	by applying   the norm map $N_{\KK/k_2}=1+\tau_2$. We have
 			\begin{eqnarray*}
 				N_{\KK/k_2}(\chi^2)&=&
 				\varepsilon_{2}^{2a}(-1)^b \cdot \varepsilon_{\pt}^{2c}\cdot (-1)^d\cdot(-1)^{a_1 e} \cdot \varepsilon_{2}^{e} \cdot (\varepsilon_{2}\varepsilon_{\pt}\varepsilon_{2\pt})^{f}\cdot(-1)^{a_2 g}\cdot\varepsilon_{2}^{g}\\
 				&=& \varepsilon_{2}^{2a}\varepsilon_{\pt}^{2c}(\varepsilon_{2}\varepsilon_{\pt}\varepsilon_{2\pt})^f(-1)^{b+d+a_1 e+a_2 g} \varepsilon_{2}^{g+e}
 			\end{eqnarray*}
 			for some $a_1,a_2 \in \{0,1\}$.	Thus, $b+d+a_1 e +a_2g\equiv 0\pmod2$. As $\varepsilon_{2}$ is not a square in $k_2$, then $e=g$. Therefore, 
 			$$\chi^2=\varepsilon_{2}^a\varepsilon_{\po}^b \varepsilon_{\pt}^c\varepsilon_{\po\pt}^d\sqrt{\varepsilon_{2}\varepsilon_{\po}\varepsilon_{2\po}}^e\sqrt{\varepsilon_{2}\varepsilon_{\pt}\varepsilon_{2\pt}}^f
 			\sqrt{\varepsilon_{2}\varepsilon_{\po\pt}\varepsilon_{2\po\pt} }^e,$$
 			with $b+d+(a_1   +a_2)e\equiv 0\pmod2$.\\
 			\noindent\ding{229} By applying the norm $N_{\KK/k_3}=1+\tau_2\tau_3$, we get  
 			\begin{eqnarray*}
 				N_{\KK/k_3}(\chi^2)&=&\varepsilon_{2}^{2a}\cdot (-1)^b\cdot (-1)^c \cdot\varepsilon_{\po\pt}^{2d}\cdot (-1)^{b_1 e}\cdot\varepsilon_2^e \cdot(-1)^{b_2 f}\cdot\varepsilon_2^f \cdot(\varepsilon_{2}\varepsilon_{\po\pt}\varepsilon_{2\po\pt})^e \\
 				&=& 	\varepsilon_{2}^{2a}\varepsilon_{\po\pt}^{2d}  (\varepsilon_{2}\varepsilon_{\po\pt}\varepsilon_{2\po\pt})^e(-1)^{b+c+b_1 e+b_2 f} \varepsilon_{2}^{e+f}.
 			\end{eqnarray*}
 			with $b_1,b_2\in \{0,1\}$.
 			Thus $e=f$ and $b+c+b_1 e+b_2 f\equiv b+c+(b_1  +b_2 )e\equiv 0 \pmod 2$. Therefore, 
 			$$\chi^2=\varepsilon_{2}^a\varepsilon_{\po}^b \varepsilon_{\pt}^c\varepsilon_{\po\pt}^d\sqrt{\varepsilon_{2}\varepsilon_{\po}\varepsilon_{2\po}}^e\sqrt{\varepsilon_{2}\varepsilon_{\pt}\varepsilon_{2\pt}}^e
 			\sqrt{\varepsilon_{2}\varepsilon_{\po\pt}\varepsilon_{2\po\pt} }^e,$$ 
 			At this stage, one can deduce the result, but we continue applying norm maps to collect further information about the  exponents $a$, $b$, $c$ and $d$. \\
 			\noindent\ding{229} By applying the norm $N_{\KK/k_4}=1+\tau_1$, we get:
 			\begin{eqnarray*}
 				N_{\KK/k_4}(\chi^2)&=&(-1)^a\cdot  \varepsilon_{\po}^{2b}\cdot\varepsilon_{\pt}^{2c}\cdot\varepsilon_{\po\pt}^{2d}\cdot (-1)^{c_1 e}\cdot\varepsilon_{\po}^{e}\cdot(-1)^{c_2 e}\cdot\varepsilon_{\pt}^{e}\cdot(-1)^{c_3 e}\cdot\varepsilon_{\po\pt}^{e},\\
 				&=& \varepsilon_{\po}^{2b}\varepsilon_{\pt}^{2c}\varepsilon_{\po\pt}^{2d} (\varepsilon_{\po}\varepsilon_{\pt}\varepsilon_{\po\pt})^{e}(-1)^{a+(c_1+c_2+c_3)e},
 			\end{eqnarray*}
 			with $c_1,c_2$ and $c_3\in \{0,1\}$. Note that by Remark \ref{forq4}, $\varepsilon_{\po}\varepsilon_{\pt}\varepsilon_{\po\pt}$ is a square in $k_4$.  So the only information we get is that  $a+(c_1+c_2+c_3)e\equiv 0 \pmod 2$.\\ 
 			\noindent\ding{229} Now let us  apply the norm $N_{\KK/k_6}=1+\tau_1\tau_3$. We have
 			\begin{eqnarray*}
 				N_{\KK/k_6}(\chi^2)&=&(-1)^a \cdot \varepsilon_{\po}^{2b}\cdot(-1)^{c}\cdot(-1)^{d}\cdot (-1)^{d_1 e}\cdot\varepsilon_{\po}^{e}\cdot(-1)^{d_2 e}\cdot\varepsilon_{2\pt}^{e}\cdot(-1)^{d_3 e}\varepsilon_{2\po\pt}^{e},\\
 				&=& \varepsilon_{\po}^{2b} (\varepsilon_{\po}\varepsilon_{2\pt}\varepsilon_{2\po\pt})^{e}(-1)^{a+c+d+(d_1+d_2+d_3)e},
 			\end{eqnarray*}
 			with $d_1,d_2$ and $d_3\in \{0,1\}$. By Corollary \ref{sqrtcnd}, $\varepsilon_{\po}\varepsilon_{2\pt}\varepsilon_{2\po\pt}$ is a square in $k_6$. So the only information we deduce is $a+c+d+(d_1+d_2+d_3)e\equiv 0 \pmod 2$.\\ 
 			\noindent\ding{229} Now we apply the norm $N_{\KK/k_1}=1+\tau_3$. This gives:
 			\begin{eqnarray*}
 				N_{\KK/k_1}(\chi^2)&=&\varepsilon_2^{2a}\cdot  \varepsilon_{\po}^{2b} \cdot(\varepsilon_{2}\varepsilon_{\po}\varepsilon_{2\po})^{e} \cdot(-1)^{e_1 e}\cdot\varepsilon_{2}^e \cdot(-1)^{e_2 e}\cdot\varepsilon_{2}^e ,\\
 				&=&  \varepsilon_2^{2a+2e} \varepsilon_{\po}^{2b} (\varepsilon_{2}\varepsilon_{\po}\varepsilon_{2\po})^{e}(-1)^{c+d+(e_1+e_2)e}, 
 			\end{eqnarray*}
 			with $e_1,e_2 \in \{0,1\}$. As $\varepsilon_{2}\varepsilon_{\po}\varepsilon_{2\po}$ is a square in $k_1$, the only information we deduce is  that $c+d+(e_1+e_2)e\equiv 0 \pmod 2$.\\ 
 			\noindent\ding{229} Now we apply the norm $N_{\KK/k_5}=1+\tau_1\tau_2$. We have:
 			\begin{eqnarray*}
 				N_{\KK/k_5}(\chi^2)&=&(-1)^a\cdot(-1)^b \cdot\varepsilon_{\pt}^{2c}\cdot(-1)^d\cdot (-1)^{f_1 e} \cdot \varepsilon_{2\po}^e\cdot (-1)^{f_2 e}  \cdot\varepsilon_{\pt}^e \cdot(-1)^{f_3 e} \cdot \varepsilon_{2\po\pt}^e ,\\
 				&=& (\varepsilon_{\pt}\varepsilon_{2\po}\varepsilon_{2\po\pt})^e \varepsilon_{\pt}^{2c}(-1)^{c+b+d+(f_1+f_2+f_3)e},
 			\end{eqnarray*}
 			with $f_1,f_2$ and  $f_3 \in \{0,1\}$. As by Corollary \ref{sqrtcnd}, $\varepsilon_{\pt}\varepsilon_{2\po}\varepsilon_{2\po\pt}$ is a square in $k_5$,  then the only fact we can deduce   is  $c+b+d+(f_1+f_2+f_3)e\equiv 0 \pmod 2$.  \\
 			\noindent\ding{229} By applying the norm $N_{\KK/k_7}=1+\tau_1\tau_2\tau_3$, we get:
 			\begin{eqnarray*}
 				N_{\KK/k_7}(\chi^2)&=& (-1)^{a}\cdot (-1)^b\cdot (-1)^c\cdot   \varepsilon_{ \po\pt}^{2d}\cdot (-1)^{f_1e}  \varepsilon_{2 \po}^{e}\cdot (-1)^{d_2e}  \varepsilon_{2 \pt}^{e}\cdot (-1)^{c_3e}  \varepsilon_{  \po\pt}^{e} ,\\
 				&=& \varepsilon_{ \po\pt}^{2d} (\varepsilon_{2 \po}    \varepsilon_{2 \pt}    \varepsilon_{  \po\pt})^{e}(-1)^{a+b+c+f_1e+d_2e+c_3e}.
 			\end{eqnarray*}
 			So the only information we may deduce is that $	a+b+c+(f_1+d_2+c_3)e\equiv 0\pmod 2$.
 			Therefore, we eliminated all the equations except the following $$\chi^2=\varepsilon_{2}^a\varepsilon_{\po}^b \varepsilon_{\pt}^c\varepsilon_{\po\pt}^d\sqrt{\varepsilon_{2}\varepsilon_{\po}\varepsilon_{2\po}}^e\sqrt{\varepsilon_{2}\varepsilon_{\pt}\varepsilon_{2\pt}}^e
 			\sqrt{\varepsilon_{2}\varepsilon_{\po\pt}\varepsilon_{2\po\pt} }^e,$$
 			with    
 			\begin{eqnarray}
 				b+d+(a_1+a_2)e&\equiv& 0 \pmod 2,\label{eq7}\\
 				b+c+(a_1+b_2)e&=& 0 \pmod 2,\label{eq8}\\
 				a+(c_1+c_2+c_3)e&=& 0 \pmod 2,\label{eq9}\\
 				a+c+d+(c_1+d_2+d_3)e&=& 0 \pmod 2,\\
 				c+d+(b_2+a_2)e&=& 0 \pmod 2,\\
 				a+b+d+(f_1+c_2+d_3)e&=& 0 \pmod 2,\\
 				a+b+c+(f_1+d_2+c_3)e&=& 0 \pmod 2.\label{eq20}
 			\end{eqnarray}
 			\noindent\ding{51} If $e=0$, then $\eqref{eq9}$ implies $a=0$, and from $\eqref{eq7}$ and $\eqref{eq8}$ we get $b=c=d$, thus 
 			$$\chi^2=(\varepsilon_{\po} \varepsilon_{\pt} \varepsilon_{\po\pt})^b .$$
 			By Remark \ref{forq4}  $ \sqrt{\varepsilon_{\po} \varepsilon_{\pt} \varepsilon_{\po\pt}}\in \KK$.\\
 			\noindent\ding{51} Now if $e=1$, we have :
 			\begin{eqnarray}\label{eqq=5}
 				\chi^2=\varepsilon_{2}^a\varepsilon_{\po}^b \varepsilon_{\pt}^c \varepsilon_{\po\pt}^d (\sqrt{\varepsilon_{2}\varepsilon_{\po}\varepsilon_{2\po}}\sqrt{\varepsilon_{2}\varepsilon_{\pt}\varepsilon_{2\pt}}
 				\sqrt{\varepsilon_{2}\varepsilon_{\po\pt}\varepsilon_{2\po\pt} }).
 			\end{eqnarray}
 			Notice that if  $b=c=d=1$, then from the fact that $ \sqrt{\varepsilon_{\po} \varepsilon_{\pt} \varepsilon_{\po\pt}}\in \KK$, we can consider $\chi^2=\varepsilon_{2}^a  (\sqrt{\varepsilon_{2}\varepsilon_{\po}\varepsilon_{2\po}}\sqrt{\varepsilon_{2}\varepsilon_{\pt}\varepsilon_{2\pt}}
 			\sqrt{\varepsilon_{2}\varepsilon_{\po\pt}\varepsilon_{2\po\pt} })$.
 			On the other hand, by means of Lemmas \ref{class numbers of quadratic field} and \ref{wada's f.}, we get:
 			\begin{eqnarray*}
 				q(\KK)&=&\frac{2^{9}h_2(\KK) }{h_2(2) h_2(\po) h_2(\pt)h_2(2\po) h_2(2\pt)h_2(\po\pt)  h_2(2\po\pt)}\\ 
 				&=&\frac{2^{9}h_2(\KK) }{1\cdot 1 \cdot 1  \cdot 2 \cdot 2 \cdot 2 \cdot 4}=2^{4}h_2(\KK)=2^5.
 			\end{eqnarray*}
 			The last equality follows from Lemma \ref{equalies}.  So the equation \eqref{eqq=5} admits a solution in $\KK$, since otherwise we get $q(\KK)=2^4$. So we have  
 			$E_{\KK}=\langle -1,  \varepsilon_{2}, \varepsilon_{\po},\varepsilon_{\pt},\\ \sqrt{\varepsilon_{2}\varepsilon_{\po}\varepsilon_{2\po}},\sqrt{\varepsilon_{2}\varepsilon_{\pt}\varepsilon_{2\pt}}, \sqrt{\varepsilon_{\po}\varepsilon_{\pt}\varepsilon_{\po\pt}}, \sqrt{\varepsilon_{2}^a\varepsilon_{\po}^b \varepsilon_{\pt}^c \varepsilon_{\po\pt}^d (\sqrt{\varepsilon_{2}\varepsilon_{\po}\varepsilon_{2\po}}\sqrt{\varepsilon_{2}\varepsilon_{\pt}\varepsilon_{2\pt}}
 				\sqrt{\varepsilon_{2}\varepsilon_{\po\pt}\varepsilon_{2\po\pt} })}  \rangle,$
 			for some  $a$, $b$, $c$ and $d$ in $\{0,1\}$ with $(b,c,d)\not=(1,1,1)$
 		\end{enumerate} 
 		This completes the proof. 
 	\end{proof}
 	
 	\begin{remark} 
 		Notice that if $\left(\frac{p_1}{p_2}\right)=-1$, then the numbers $a$, $b$, $c$ and $d$ in the second item of the above theorem satisfy the equations \eqref{eq7}--\eqref{eq20}.
 	\end{remark}

 	\section{ \bf The case: $\po \equiv 1 \pmod8$, $  \pt\equiv 1 \pmod4$   and   $(n_3, n_4)\not=(-1,-1)$}$\,$ 		\label{section2}		
 	
 	In this section, we compute the unit group of $\KK=\QQ(\sqrt 2, \sqrt{p_1}, \sqrt{p_2} )$, where $\po \equiv 1 \pmod8$ and  $  \pt\equiv 1 \pmod4$  are two prime numbers such that $(n_3, n_4)\not=(-1,-1)$. Note that a possible interchanging of $p_1$ and $p_2$ is taken into account.
 	\begin{remark}\label{twosquare}
Keeping the previous notations, we have :
 		\begin{enumerate}[$\bullet$]
 			\item If  $N(\varepsilon_{\po\pt})=1$,  then $\varepsilon_{\po\pt}$ is a square in $\KK$.
 			
 			\item If $N(\varepsilon_{2p_j})=1$, then $\varepsilon_{2p_j}$ is not square in $k_j$ for $j\in \{5,6\}$.
 			
 			\item If $N(\varepsilon_{2\po\pt})=1$ and $x$ and $y$ are two integers such that $\varepsilon_{2\po\pt}=x+y\sqrt{2\po\pt}$, then $\sqrt{\varepsilon_{2\po\pt}}\in \KK$ and it takes one of the three following forms :
 			$$\frac{y_1}{2}\sqrt{2}+y_2\sqrt{\po\pt},  
 			\qquad y_1\sqrt{\pt}+\frac{y_2}{2}\sqrt{2\po} \;\;\textbf{or}\;\; y_1\sqrt{\po}+\frac{y_2}{2}\sqrt{2\pt},$$
 		according to whether $x\pm1$, $p_1(x\pm1)$ or $2p_1(x\pm1)$ is a square in $\NN$.	Where $y_1$ and $y_2$ are two integers such that $y=y_1 y_2$.

 		\end{enumerate}
 	\end{remark}
 	\begin{proof} Let us proof the first item  and  the  proof of the rest is analogous (or one can consult the proof of  \cite[Proposition 2.3]{AzZektaous} for the last item).
 			Assume that $N(\varepsilon_{\po\pt})=1$ and that there are integers  $a$ and $b$  such that $\varepsilon_{\po\pt}=a+b\sqrt{\po\pt}$. Thus $a^2-1=b^2\po\pt$. By   taking into account the unique factorization in $\ZZ$, we check that we have one of the following systems:
 			$$(1):\ \left\{ \begin{array}{ll}
 				a\pm 1=b_1^2\\
 				a\mp 1=p_1p_2 b_2^2,
 			\end{array}\right. \quad
 			\text{or}\quad
 			(2):\ \left\{ \begin{array}{ll}
 				a\pm 1=p_1b_1^2\\
 				a\mp 1=p_2 b_2^2,
 			\end{array}\right.\quad
 			\text{or}\quad
 			(3):\ \left\{ \begin{array}{ll}
 				a\pm 1=2p_1b_1^2\\
 				a\mp 1=2p_2 b_2^2,
 			\end{array}\right.\quad
 			$$	
 			where $b_1$ and $b_2$ are two integers such that  $b=b_1 b_2$ for   Systems $(1)$ and $(2)$, and $b=2b_1 b_2$ for    System $(3)$. From these systems we deduce that we have  :
 			 $$\sqrt{\varepsilon_{ \po\pt}}=\frac{b_1}{2}\sqrt{2}+\frac{b_2}{2}\sqrt{2\po\pt},  
 			\quad \frac{b_1}{2}\sqrt{2\po}+\frac{b_2}{2}\sqrt{2\pt} \;\;\;\textbf{or}\;\;\; b_1\sqrt{\po}+b_2\sqrt{\pt},$$ 
 			according to whether $a\pm1$, $p_1(a\pm1)$ or $2p_1(a\pm1)$ is a square in $\NN$.	
 			Thus,  $\sqrt{\varepsilon_{ \po\pt}}\in \KK$.\\
 			Now if  $a$ and $b$ are semi-integers, then  by considering   $a=\tilde{a}/2$ and $b=\tilde{b}/2$ where    $\tilde{a}$ and $\tilde{b}$ are integers, we proceed similarly to get the same result.  

 	\end{proof}

 	\begin{theorem}\label{MT3} Let $\po \equiv 1 \pmod8$, $  \pt\equiv 1 \pmod4$  be two prime numbers    such that   $(n_3, n_4)\not=(-1,-1)$. Let $x$ and  $y$ be two integers such that $\varepsilon_{2\po\pt}=x+y\sqrt{2\po\pt} $. Put $\KK=\QQ(\sqrt 2, \sqrt{\po}, \sqrt{\pt} )$.
 		\begin{enumerate}[\rm $1)$]
 			\item    Assume that  $(n_1,n_2,n_3, n_4)=(-1,-1-1,1)$. Then the unit group of $\KK$ is :
 			$$E_{\KK}=\langle -1,  \varepsilon_{2}, \varepsilon_{\po},   \varepsilon_{\pt},\sqrt{\varepsilon_{2\po\pt}}, \sqrt{\varepsilon_{2}\varepsilon_{\po}\varepsilon_{2\po}}, 
 			\sqrt{\varepsilon_{2}\varepsilon_{\pt} \varepsilon_{2\pt}}, \sqrt{\varepsilon_{\po}\varepsilon_{\pt}\varepsilon_{\po\pt}}  \rangle$$
 			
 			\item    Assume that $(n_1,n_2,n_3, n_4)=(-1, 1,-1,1)$. Then the unit group of $\KK$ is :
 			$$E_{\KK}=\langle-1,  \varepsilon_{2}, \varepsilon_{\po},\varepsilon_{\pt}, \sqrt{\varepsilon_{2\pt}}, \sqrt{\varepsilon_{2\po\pt}} ,  \sqrt{\varepsilon_{2}\varepsilon_{\po}\varepsilon_{2\po}}, \sqrt{\varepsilon_{\po}\varepsilon_{\pt}\varepsilon_{\po\pt}} \rangle.$$
 			
 			\item    Assume that $(n_1,n_2,n_3, n_4)=( 1, 1, -1, 1)$.    We have two cases: 
 			\begin{enumerate}[$a)$]
 				\item If $(x\pm1)$ is   {\bf not} a square in $\NN$,  then the unit group of $\KK$ is :
 				$$E_{\KK}=\langle-1,  \varepsilon_{2}, \varepsilon_{\po},\varepsilon_{\pt}, \sqrt{\varepsilon_{2\po}},\sqrt{\varepsilon_{2\pt}}, \sqrt{\varepsilon_{2\po\pt}},\sqrt{\varepsilon_{\po}\varepsilon_{\pt}\varepsilon_{\po\pt}} \rangle.$$
 				
 				\item Else,  then the unit group of $\KK$ is :   $$E_{\KK}=\langle -1, \varepsilon_{2}, \varepsilon_{\po},\varepsilon_{\pt}, \sqrt{\varepsilon_{2\po}}, \sqrt{\varepsilon_{2\pt}},    \sqrt{\varepsilon_{\po}\varepsilon_{\pt}\varepsilon_{\po\pt}},
 				\sqrt{\varepsilon_{2}^{\alpha a} \varepsilon_{\po}^{\alpha b}\varepsilon_{\pt}^{\alpha c}\varepsilon_{\po\pt}^{\alpha d} \sqrt{\varepsilon_{2\po}\varepsilon_{2\pt}\varepsilon_{2\po\pt}}^{1+\gamma}} \rangle.$$ 
 				for some  $\alpha $, $\gamma$, $a$, $b$, $c$ and $d$ in $\{0,1\}$ with $(b,c,d)\not=(1,1,1)$ and $\alpha \not= \gamma$.
 			\end{enumerate}
 			
 			\item    Assume that $(n_1,n_2,n_3, n_4)=(-1, 1, 1,-1)$. Then the unit group of $\KK$ is :
 			$$E_{\KK}=\langle-1,  \varepsilon_{2}, \varepsilon_{\po},\varepsilon_{\pt}, \sqrt{\varepsilon_{\po\pt}},\sqrt{\varepsilon_{2}\varepsilon_{\po}\varepsilon_{2\po}},\sqrt{\varepsilon_{2\pt}}, \sqrt{\varepsilon_{2}\varepsilon_{\po}\varepsilon_{\pt}\varepsilon_{2\po\pt}}    \rangle.$$
 			
 			\item     Assume that $(n_1,n_2,n_3, n_4)=(-1,-1, 1,-1)$. Then the unit group of $\KK$ is :
 			$$E_{\KK}=\langle-1,  \varepsilon_{2}, \varepsilon_{\po},\varepsilon_{\pt}, \sqrt{\varepsilon_{\po\pt}},\sqrt{\varepsilon_{2}\varepsilon_{\po}\varepsilon_{2\po}},\sqrt{\varepsilon_{2}\varepsilon_{\pt}\varepsilon_{2\pt}}, \sqrt{\varepsilon_{2}\varepsilon_{\po}\varepsilon_{\pt}\varepsilon_{2\po\pt}}    \rangle.$$
 			
 				\item    Assume that $(n_1,n_2,n_3, n_4)=( 1, 1, 1, -1)$. Then the unit group of $\KK$ is :
 			$$E_{\KK}=\langle-1,  \varepsilon_{2}, \varepsilon_{\po},\varepsilon_{\pt}, \sqrt{\varepsilon_{\po\pt}},\sqrt{\varepsilon_{2\po}}, \sqrt{\varepsilon_{2\pt}}, \sqrt{\varepsilon_{2}\varepsilon_{\po}\varepsilon_{\pt}\varepsilon_{2\po\pt}}   \rangle.$$
 			
 			\item    Assume that $(n_1,n_2,n_3, n_4)=(-1, 1, 1, 1)$. Then we have two cases :
 			\begin{enumerate}[$a)$]
 				\item If $(x\pm1)$ is a square in $\NN$ , then the units group of $\KK$ is 
 				$$E_{\KK}=\langle-1,  \varepsilon_{2}, \varepsilon_{\po},\varepsilon_{\pt},\sqrt{\varepsilon_{\po\pt}},  \sqrt{\varepsilon_{2\po\pt}} ,\sqrt{\varepsilon_{2\pt}} , \sqrt{\varepsilon_{2}\varepsilon_{\po}\varepsilon_{2\po}} \rangle.$$
 				\item Else, the units group of $\KK$ is 
 				$$E_{\KK}=\langle-1,  \varepsilon_{2}, \varepsilon_{\po},\varepsilon_{\pt}, \sqrt{\varepsilon_{\po\pt}},\sqrt{\varepsilon_{  2\po\pt}},\sqrt{\varepsilon_{2}\varepsilon_{\po}\varepsilon_{2\po}},
 				\sqrt{\varepsilon_{2}^{a\alpha}\varepsilon_{\pt}^{(1+a)\alpha}\sqrt{\varepsilon_{2\pt}}^{\gamma+1} }   \rangle.$$  
 				for    $\alpha$,  $\gamma$ and $a$  in $  \{0,1\}$ such that $\alpha\not=\gamma $ and $a \equiv u+1 \pmod 2$, where $u$ is the element of $\{0,1\}$ given by Lemma \ref{calcul} for $p=p_2$.  
 			\end{enumerate}

				\item    Assume that $(n_1,n_2,n_3, n_4)=(-1,-1,1,1)$. Then we have two cases :
				\begin{enumerate}[$a)$]
					\item If $(x\pm1)$ is a square in $\NN$, then the unit group of $\KK$ is 
						$$E_{\KK}=\langle-1,  \varepsilon_{2}, \varepsilon_{\po},\varepsilon_{\pt},\sqrt{\varepsilon_{\po\pt}},  \sqrt{\varepsilon_{2\po\pt}} ,  \sqrt{\varepsilon_{2}\varepsilon_{\po}\varepsilon_{2\po}}, \sqrt{\varepsilon_{\po}\varepsilon_{\pt}\varepsilon_{\po\pt}} \rangle.$$ 		
						
						\item Else, the unit group of $\KK$ is : 
						$$E_{\KK}=\langle-1,  \varepsilon_{2}, \varepsilon_{\po},\varepsilon_{\pt},\  \sqrt{\varepsilon_{\po\pt}},\sqrt{\varepsilon_{2}\varepsilon_{\po}\varepsilon_{2\po}}, \sqrt{\varepsilon_{2}\varepsilon_{\pt}\varepsilon_{2\pt}}, \sqrt{ \varepsilon_{2\po\pt}}   \rangle.$$
			 	\end{enumerate}

 			\item  Assume that  $(n_1,n_2,n_3, n_4)=( 1, 1, 1,  1)$. Then the unit group of $\KK$ is :
 			$$E_{\KK}=\langle -1, \varepsilon_{2}, \varepsilon_{\po},\varepsilon_{\pt}, \sqrt{\varepsilon_{2\po}}, \sqrt{\varepsilon_{2\pt}},  \sqrt{\varepsilon_{\po\pt}}, 
 			\sqrt{\varepsilon_{2}^{\alpha a} \varepsilon_{\po}^{\alpha b}\varepsilon_{\pt}^{\alpha c} \sqrt{\eta}^{1+\gamma}} \rangle.$$ 
 			for some  $\alpha $, $\gamma$, $a$, $b$,   and $c$ in $\{0,1\}$ with  $\alpha \not= \gamma$. Where $\eta= \varepsilon_{2\po}\varepsilon_{2\pt}\varepsilon_{2\po\pt}$, $ \varepsilon_{2\po}\varepsilon_{\po\pt}\varepsilon_{2\po\pt}$ or $ \varepsilon_{2\pt}\varepsilon_{\po\pt}\varepsilon_{2\po\pt}$ according to whether $(x\pm1)$ is a square in $\NN$ or $($$(x\pm1)$ is 
 			not a square in $\NN$ and $2\pt(x\pm1)$ is square in $\NN$$)$ or $((x\pm1)$  and $2\pt(x\pm1)$ are not squares in $\NN)$. 		\end{enumerate}
 	\end{theorem}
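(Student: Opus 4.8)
The plan is to run Wada's method (Section~\ref{sec2prep}) uniformly through the nine cases, reducing in each the determination of $E_{\KK}$ to a system of linear congruences modulo $2$ coming from the norm maps $N_{\KK/k_i}$.

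First, in each case I would read off fundamental systems of units of the three intermediate fields $k_1$, $k_2$, $k_3$ directly from the sign data $(n_1,n_2,n_3,n_4)$: the F.S.U. of $k_j=\QQ(\sqrt 2,\sqrt{p_j})$ comes from Lemma~\ref{fork1k2} according to whether $n_j=N(\varepsilon_{2p_j})$ equals $-1$ or $1$, while the F.S.U. of $k_3=\QQ(\sqrt 2,\sqrt{\po\pt})$ comes from Lemma~\ref{lmunit} according to $(n_3,n_4)$ and, in the mixed-sign subcases, to which of $x\pm 1$, $p_1(x\pm1)$, $2p_1(x\pm1)$ is a square in $\NN$. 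This yields an explicit generating set of $E_{k_1}E_{k_2}E_{k_3}$, and at the outset I would record which of $\sqrt{\varepsilon_{\po\pt}}$, $\sqrt{\varepsilon_{2p_j}}$ and $\sqrt{\varepsilon_{2\po\pt}}$ already lie in $\KK$ by Remark~\ref{twosquare}, together with the square roots supplied by Lemma~\ref{squaretest} and Remark~\ref{remarkzekh}.

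The heart of each case is to posit a square $\chi^2=\prod \varepsilon^{\,e_\varepsilon}$, the product ranging over the generators of $E_{k_1}E_{k_2}E_{k_3}$ with exponents $e_\varepsilon\in\{0,1\}$, and then to apply the seven norm maps $N_{\KK/k_i}=1+\sigma$, one for each nontrivial $\sigma\in\mathrm{Gal}(\KK/\QQ)$ and its fixed biquadratic field $k_i$. Since $N_{\KK/k_i}(\chi^2)=N_{\KK/k_i}(\chi)^2$ must be a square in $E_{k_i}$, each norm produces one linear congruence mod $2$ on the exponents. To evaluate these norms I would assemble, exactly as in the proof of Theorem~\ref{MT1A}, a table recording $\varepsilon^{1+\sigma}$ modulo squares for every generator $\varepsilon$ and every $\sigma$; the entries for the ``genuine'' units follow from the Galois action and Lemma~\ref{norms}, those for the square-root generators need the fundamental systems of units of $k_4,\dots,k_7$ (Lemma~\ref{biquadunutk6} and its symmetric analogues), and the $(-1)^{\ast}$ twists attached to the half-integer units $\sqrt{\varepsilon_{2p_j}}$ (present whenever $n_j=1$) are precisely those given by Lemma~\ref{calcul}, which in particular fixes the parity $a\equiv u+1$ occurring in case $7)b)$. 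Solving the congruence system kills all but a few exponent vectors, and for each survivor I would check via Lemma~\ref{squaretest}, Remark~\ref{remarkzekh} and Remark~\ref{twosquare} whether the corresponding square root genuinely lies in $\KK$.

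Finally, in the cases carrying free parameters $\alpha,\gamma$ --- namely $3)b)$, $7)b)$ and $9)$ --- the congruences alone do not decide whether one last square root is adjoined; here I would compute the unit index $q(\KK)$ from Wada's class number formula (Lemma~\ref{wada's f.}) using the $2$-class numbers of the seven quadratic subfields (Lemma~\ref{class numbers of quadratic field}), and compare the forced value of $q(\KK)$ with the number of independent square roots already produced, exactly as in case $3)$ of Theorem~\ref{MT1A}. The main obstacle is the bookkeeping in precisely these mixed-sign cases: once some $n_j=1$ the generator $\sqrt{\varepsilon_{2p_j}}$ enters with the delicate $(-1)^u$ twists of Lemma~\ref{calcul}, the shape of the $k_3$-generator --- and hence of the auxiliary product $\eta$ in case $9)$ --- depends on which of $x\pm1$, $p_1(x\pm1)$, $2p_1(x\pm1)$ is a square, and one must propagate the parameters $\alpha,\gamma,a,b,c,d$ consistently through all seven congruences to land on the stated generating sets.
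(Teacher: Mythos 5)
Your main engine --- Wada's method, the case-by-case extraction of fundamental systems of units of $k_1,k_2,k_3$ from Lemmas \ref{fork1k2} and \ref{lmunit}, the mod-$2$ congruences obtained from the norm maps $N_{\KK/k_i}$ with the $(-1)^u$ twists of Lemma \ref{calcul}, and the final membership checks via Lemma \ref{squaretest}, Remark \ref{remarkzekh} and Remark \ref{twosquare} --- is exactly the paper's proof, and for the unambiguous cases $1)$, $2)$, $3)a)$, $4)$--$6)$, $7)a)$ and $8)$ your outline would go through essentially as the paper does it.

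The gap is in your last paragraph. For the cases $3)b)$, $7)b)$ and $9)$ you propose to decide whether the final square root is adjoined by computing $q(\KK)$ from Kuroda's formula (Lemma \ref{wada's f.}) ``exactly as in case $3)$ of Theorem \ref{MT1A}''. This step cannot be carried out. Kuroda's formula only relates $q(\KK)$ to $h_2(\KK)$ and the seven quadratic $2$-class numbers; in Theorem \ref{MT1A} $3)$ it was usable only because Lemma \ref{equalies} independently pins down $h_2(\KK)=q(k_3)=2$, and that lemma requires $p_1\equiv p_2\equiv 5\pmod 8$ and $\left(\frac{p_1}{p_2}\right)=-1$. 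Under the hypotheses of Theorem \ref{MT3} there is no analogue: $h_2(\KK)$ is not known in advance (determining it is equivalent to determining $q(\KK)$), and the quadratic class numbers $h_2(2p_1)$, $h_2(p_1p_2)$, $h_2(2p_1p_2)$ are themselves not fixed by the sign vector $(n_1,n_2,n_3,n_4)$ when $p_1\equiv 1\pmod 8$ (Lemma \ref{class numbers of quadratic field} leaves, e.g., the case $\genfrac(){}{0}{2}{p_1}_4=\genfrac(){}{0}{p_1}{2}_4=1$ open). Worse, no argument from the stated hypotheses can settle the question at all: the paper's concluding remark and its table of PARI/GP examples exhibit prime pairs with identical sign data $(1,1,-1,1)$ (resp. $(-1,1,1,1)$) realizing both $q(\KK)=2^4$ and $q(\KK)=2^5$, which is precisely why the statement of Theorem \ref{MT3} leaves the parameters $\alpha\not=\gamma$ undetermined and why the authors describe the decision as very difficult. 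The saving grace is that the statement does not ask you to decide: the paper simply stops after the congruence analysis and records both possibilities, so your proof is completed by \emph{dropping} this final step rather than attempting it.
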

 	\begin{proof}We use the same techniques as  in the proof of Theorem \ref{MT1A}. Let us start with the first item.
 		\begin{enumerate}[\rm $1)$]
 			\item Assume that $(n_1,n_2,n_3, n_4)=( -1, -1, -1,  1)$, and let $\varepsilon_{2\po\pt}=x+y\sqrt{2\po\pt} $ for some integers $x$ and $y$. 
 			Note that  a F.S.U of $k_i$ is $\{\varepsilon_{2},\varepsilon_{p_i}, \sqrt{\varepsilon_{2}\varepsilon_{p_i}\varepsilon_{2p_i}}\}$, for $i \in  \{1,2\}$ (cf. Lemma \ref{fork1k2}) and a F.S.U of     $k_3$ is $\{\varepsilon_{2},\varepsilon_{\po\pt}, \sqrt{\varepsilon_{2\po\pt}}\}$ or $\{\varepsilon_{2},\varepsilon_{\po\pt},  {\varepsilon_{2\po\pt}}\}$ according to whether $x\pm1$ is a square in $\NN$ or not (cf. Lemma \ref{lmunit}). Thus,  we   distinguish two cases:
 			\begin{enumerate}[ $\bullet$]
 				\item If $x\pm1$ is a square in $\NN$, then:
 				$$E_{k_1}E_{k_2}E_{k_3}=\langle-1,  \varepsilon_{2}, \varepsilon_{\po},\varepsilon_{\pt}, \varepsilon_{\po\pt} ,\sqrt{\varepsilon_{2\po\pt}},  \sqrt{\varepsilon_{2}\varepsilon_{\po}\varepsilon_{2\po}},\sqrt{\varepsilon_{2}\varepsilon_{\pt}\varepsilon_{2\pt}} \rangle.$$
 				So we consider
 				\begin{eqnarray*} 
 					\chi^2=\varepsilon_{2}^a\varepsilon_{\po}^b \varepsilon_{\pt}^c\varepsilon_{\po\pt}^d \sqrt{\varepsilon_{2\po\pt}}^e\sqrt{\varepsilon_{2}\varepsilon_{\po}\varepsilon_{2\po}}^f\sqrt{\varepsilon_{2}\varepsilon_{\pt}\varepsilon_{2\pt}}^g,	 
 				\end{eqnarray*}
 				where $a, b, c, d, e, f$ and $g$ are in $\{0, 1\}$. We have: 
 			 	\begin{eqnarray}
 						N_{\KK/k_2}(\chi^2)&=&
 					\varepsilon_{2}^{2a}(-1)^b \cdot \varepsilon_{\pt}^{2c}\cdot (-1)^d\cdot(-1)^{a_1 e} \cdot  \varepsilon_{2}^{f}\cdot (-1)^{a_2 f}\cdot(\varepsilon_{2}\varepsilon_{\pt}\varepsilon_{2\pt})^{g}\label{16}, \\
 					N_{\KK/k_3}(\chi^2)&=&
 					\varepsilon_{2}^{2a}(-1)^b \cdot (-1)^c\cdot \varepsilon_{\po\pt}^{2d} \cdot  \varepsilon_{2\po\pt}^{e}\cdot \varepsilon_{2}^{g}(-1)^{a_3 g}\label{17}, \\
 					N_{\KK/k_4}(\chi^2)&=&
 					(-1)^a\cdot \varepsilon_{\po}^{2b} \cdot \varepsilon_{\pt}^{2c}\cdot \varepsilon_{\po\pt}^{2d} \cdot  (-1)^{a_4 e}, \\
 					N_{\KK/k_6}(\chi^2)&=&
 					(-1)^a\cdot \varepsilon_{\po}^{2b} \cdot 	(-1)^c\cdot 	(-1)^d \cdot \varepsilon_{2\po\pt}^{e} \cdot  (-1)^{a_5 e}\label{19}, 	                 
 				\end{eqnarray}

for some $a_i\in \{0,1\}$. Thus, we have the following congruent systems : 
	\[
\left \{
\begin{array}{ccc}
	b+d+a_1 e+ a_2 f & \equiv& 0 \pmod 2, \\
	b+c+a_3 g & \equiv& 0 \pmod 2, \\
	a+a_4 e & \equiv& 0 \pmod 2, \\
	a+c+d+a_5 e & \equiv& 0 \pmod 2. \\
\end{array}
\right.
\]
 Note that  $\varepsilon_{2}$ is not a square in $k_2$ and $k_3$, then  \eqref{16} and \eqref{17} give $f=g=0$. As $x\pm1$ is a square in $\NN$, then   $\varepsilon_{2\po\pt}$ is not a square in $k_6$ $($cf. Remark \ref{twosquare}$)$. So    \eqref{19} gives $e=0$ and  from the above system  we deduce that 
 				  $b=c=d$ and $ a= 0$. Therefore,
 				$$\chi^2=(\varepsilon_{\po}\varepsilon_{\pt}\varepsilon_{\po\pt})^b .$$
 				As according to  Lemma \ref{squaretest} we have $\sqrt{\varepsilon_{\po}\varepsilon_{\pt}\varepsilon_{\po\pt}}\in \KK$, then  
 				$$E_{\KK}=\langle -1,  \varepsilon_{2}, \varepsilon_{\po},   \varepsilon_{\pt},\sqrt{\varepsilon_{2\po\pt}}, \sqrt{\varepsilon_{2}\varepsilon_{\po}\varepsilon_{2\po}}, 
 				\sqrt{\varepsilon_{2}\varepsilon_{\pt} \varepsilon_{2\pt}}, \sqrt{\varepsilon_{\po}\varepsilon_{\pt}\varepsilon_{\po\pt}}  \rangle.$$
 				\item If $x\pm1$ is not a square in $\NN$,   then 
 				$$E_{k_1}E_{k_2}E_{k_3}=\langle-1,  \varepsilon_{2}, \varepsilon_{\po},\varepsilon_{\pt}, \varepsilon_{\po\pt} ,\varepsilon_{2\po\pt},  \sqrt{\varepsilon_{2}\varepsilon_{\po}\varepsilon_{2\po}},\sqrt{\varepsilon_{2}\varepsilon_{\pt}\varepsilon_{2\pt}} \rangle.$$
 			Notice that by Remark \ref{twosquare} $\varepsilon_{2\po\pt}$ is a square in $\KK$. Thus, we consider 
 				\begin{eqnarray} 
 					\chi^2=\varepsilon_{2}^a\varepsilon_{\po}^b \varepsilon_{\pt}^c\varepsilon_{\po\pt}^d \sqrt{\varepsilon_{2}\varepsilon_{\po}\varepsilon_{2\po}}^e\sqrt{\varepsilon_{2}\varepsilon_{\pt}\varepsilon_{2\pt}}^f,	 
 				\end{eqnarray}
 				where $a, b, c, d, e,$ and $f$ are in $\{0, 1\}$. 
 					By applying the   maps $N_{\KK/k_2}, N_{\KK/k_3}$ and $N_{\KK/k_4}$, as above, we eliminate   the forms of $\chi^2$  except the following:
 				\begin{eqnarray} 
 					\chi^2= (\varepsilon_{\po} \varepsilon_{\pt}\varepsilon_{\po\pt})^b.	 
 				\end{eqnarray}
 				Note that    $\sqrt{\varepsilon_{\po}\varepsilon_{\pt}\varepsilon_{\po\pt}}\in \KK $ (cf. Lemma  \ref{squaretest}), so we have:  
 				$$E_{\KK}=\langle -1,  \varepsilon_{2}, \varepsilon_{\po},   \varepsilon_{\pt},\sqrt{\varepsilon_{2\po\pt}}, \sqrt{\varepsilon_{2}\varepsilon_{\po}\varepsilon_{2\po}}, 
 				\sqrt{\varepsilon_{2}\varepsilon_{\pt} \varepsilon_{2\pt}}, \sqrt{\varepsilon_{\po}\varepsilon_{\pt}\varepsilon_{\po\pt}}  \rangle.$$
 			\end{enumerate}
 			
 			\item Now assume that $(n_1,n_2,n_3, n_4)=(-1, 1,-1,1)$. By Lemmas \ref{fork1k2} and \ref{lmunit}, we have:  
 			\begin{enumerate}[$\star$]
 				\item A F.S.U of $k_1$ is given by $\{\varepsilon_{2},\varepsilon_{\po}, \sqrt{\varepsilon_{2}\varepsilon_{\po}\varepsilon_{2\po}}\}$,
 				\item A F.S.U of $k_2$ is given by $\{\varepsilon_{2},\varepsilon_{\pt}, \sqrt{\varepsilon_{2\pt}}\}$,
 				\item A F.S.U of $k_3$ is given by $\{\varepsilon_{2},\varepsilon_{\po\pt}, \sqrt{\varepsilon_{2\po\pt}}\}$ or $\{\varepsilon_{2},\varepsilon_{\po\pt},\varepsilon_{2\po\pt}\}$ according to whether $x\pm1$ is a square in $\NN$ or not.  
 			\end{enumerate} 
 			\begin{enumerate}[$\bullet$]
 				\item If  $x\pm1$ is a square in $\NN$, we have  
 				$$E_{k_1}E_{k_2}E_{k_3}=\langle-1,  \varepsilon_{2}, \varepsilon_{\po},\varepsilon_{\pt}, \varepsilon_{\po\pt}, \sqrt{\varepsilon_{2\pt}} ,\sqrt{\varepsilon_{2\po\pt}},  \sqrt{\varepsilon_{2}\varepsilon_{\po}\varepsilon_{2\po}} \rangle.$$
 				As before we consider $\chi\in \KK$ such that:
 				\begin{eqnarray*} 
 					\chi^2=\varepsilon_{2}^a\varepsilon_{\po}^b \varepsilon_{\pt}^c\varepsilon_{\po\pt}^d \sqrt{\varepsilon_{2\pt}}^e\sqrt{\varepsilon_{2\po\pt}}^f\sqrt{\varepsilon_{2}\varepsilon_{\po}\varepsilon_{2\po}}^g,
 				\end{eqnarray*}
 				where $a, b, c, d, e, f$ and $g$ are in $\{0, 1\}$. We have:
 				\begin{eqnarray}
 						N_{\KK/k_2}(\chi^2)&=&
 					\varepsilon_{2}^{2a}(-1)^b \cdot \varepsilon_{\pt}^{2c}\cdot (-1)^d\cdot \varepsilon_{2\pt}^e  \cdot (-1)^{a_1 f} \cdot \varepsilon_{2}^{g}\cdot (-1)^{b_1 g} \label{21} \\
 					N_{\KK/k_3}(\chi^2)&=&
 					\varepsilon_{2}^{2a}\cdot(-1)^b \cdot (-1)^c\cdot \varepsilon_{\po\pt}^{2d} \cdot  (-1)^{ue} \varepsilon_{2\po\pt}^{f}\\
 						N_{\KK/k_4}(\chi^2)&=&
 					(-1)^a\cdot \varepsilon_{2\po}^{2b} \cdot \varepsilon_{2\pt}^{2c}\cdot \varepsilon_{\po\pt}^{2d} \cdot  (-1)^{(u+1)e} \cdot (-1)^{a_2 f}\\
 						N_{\KK/k_6}(\chi^2)&=&
 					(-1)^a\cdot \varepsilon_{\po}^{2b} \cdot (-1)^c\cdot (-1)^d \cdot (-1)^e \varepsilon_{2\pt}^e \cdot \varepsilon_{2\po\pt}^f  \cdot(-1)^{a_3 f}\label{24} \\
 					N_{\KK/k_5}(\chi^2)&=&
 					(-1)^a\cdot (-1)^b\cdot \varepsilon_{2\pt}^{2c} \cdot (-1)^d\cdot (-1)^{(u+1)e} \cdot \varepsilon_{2\po\pt}^e \cdot  (-1)^{a_4 e} \label{25}               
 				\end{eqnarray}
 				where $a_i$ and $b_i$ are in $\{0,1\}$ and $u$ is defined
 				as in Lemma   \ref{calcul} for $p=\pt$. As $\varepsilon_{2}$ is not a square in $k_2$, then   \eqref{21} gives  $g=0$ and from the fact that $\varepsilon_{2\po\pt}$ is not a square in $k_5$, we have        $e=0$, see \eqref{25}. Moreover $\varepsilon_{2\pt}$ and $\varepsilon_{2\po\pt}$ are not squares in $k_6$ then from \eqref{24} we have $e=f.$ Therefore, we extract the following system. 
 				 	\[
 				\left \{
 				\begin{array}{ccl}
 					b+d & \equiv& 0 \pmod 2 \\
 					b+c & \equiv& 0 \pmod 2 \\
 					a & =& 0  \\
 					a+c+d & \equiv& 0 \pmod 2 \\
 					a+b+d & \equiv& 0 \pmod 2 \\
 					
 				\end{array}
 				\right.
 				\]
 				From this, we eliminate all the forms of $\chi^2$ except the following:
 			  $$\chi^2=(\varepsilon_{\po}\varepsilon_{\pt}\varepsilon_{\po\pt} )^b,	 $$
 			 Hence, from the fact that $\varepsilon_{\po}\varepsilon_{\pt}\varepsilon_{\po\pt}$ is a square in $\KK$ (cf. Lemma \ref{squaretest}),   we derive that
 				$$E_{\KK}=\langle-1,  \varepsilon_{2}, \varepsilon_{\po},\varepsilon_{\pt}, \sqrt{\varepsilon_{2\pt}}, \sqrt{\varepsilon_{2\po\pt}} ,  \sqrt{\varepsilon_{2}\varepsilon_{\po}\varepsilon_{2\po}}, \sqrt{\varepsilon_{\po}\varepsilon_{\pt}\varepsilon_{\po\pt}} \rangle.$$
 				\item Similarly if  $x\pm1$ is not a square in $\NN$, we have 
 				$$E_{k_1}E_{k_2}E_{k_3}=\langle-1,  \varepsilon_{2}, \varepsilon_{\po},\varepsilon_{\pt}, \varepsilon_{\po\pt},\varepsilon_{2\po\pt}, \sqrt{\varepsilon_{2\pt}},   \sqrt{\varepsilon_{2}\varepsilon_{\po}\varepsilon_{2\po}} \rangle.$$
 				With analogous procedure  as above, we check that we have the same result in this case as well.
 			\end{enumerate}
 			
 			\item Assume that $(n_1,n_2,n_3, n_4)=( 1, 1, -1, 1)$. By Lemmas \ref{fork1k2} and \ref{lmunit} the F.S.U of $k_i$ is \begin{enumerate}[$\star$]
 				\item $\{\varepsilon_{2},\varepsilon_{p_i}, \sqrt{\varepsilon_{2p_i}}\}$ for $i=1,2$.
 				\item $\{\varepsilon_{2},\varepsilon_{\po\pt}, \sqrt{\varepsilon_{2\po\pt}}\}$ or $\{\varepsilon_{2},\varepsilon_{\po\pt},\varepsilon_{2\po\pt}\}$ according to whether $x\pm1$ is a square in $\NN$ or not, for $i=3$.  
 			\end{enumerate} 
 			\begin{enumerate}[$\bullet$]
 				
 				\item If  $x\pm1$ is not a square in $\NN$, then \label{cassimil}
 				$$E_{k_1}E_{k_2}E_{k_3}=\langle-1,  \varepsilon_{2}, \varepsilon_{\po},\varepsilon_{\pt}, \varepsilon_{\po\pt},\sqrt{\varepsilon_{2\po}}, \sqrt{\varepsilon_{2\pt}}, \varepsilon_{2\po\pt} \rangle.$$
 			 			Notice that by Remark \ref{twosquare} $\varepsilon_{2\po\pt}$ is a square in $\KK$.  	So we consider $\chi \in \KK$ such that   \begin{eqnarray}
 					\chi^2=\varepsilon_{2}^a\varepsilon_{\po}^b \varepsilon_{\pt}^c\varepsilon_{\po\pt}^d\sqrt{\varepsilon_{2\po}}^e \sqrt{\varepsilon_{2\pt}}^f,	 
 				\end{eqnarray}
 				where $a, b, c, d, e$ and $f$ are in $\{0, 1\}$.
 				By applying the six norm maps we get 
 				\begin{eqnarray}
 					N_{\KK/k_2}(\chi^2)&=&
 					\varepsilon_2^{2a}\cdot (-1)^b \cdot\varepsilon_{\pt}^{2c}(-1)^d\cdot (-1)^{ue} \cdot  \varepsilon_{2\pt}^f\nonumber \\
 					N_{\KK/k_3}(\chi^2)&=&
 					\varepsilon_2^{2a}\cdot (-1)^b \cdot(-1)^c \cdot \varepsilon_{\po\pt}^{2d}\cdot (-1)^{ue} \cdot  (-1)^{vf} \nonumber\\
 					N_{\KK/k_4}(\chi^2)&=&
 					(-1)^a\cdot \varepsilon_{\po}^{2b} \cdot\varepsilon_{\pt}^{2c} \cdot \varepsilon_{\po\pt}^{2d}\cdot (-1)^{(u+1)e} \cdot  (-1)^{(v+1)f}\nonumber\\
 					N_{\KK/k_6}(\chi^2)&=&
 					(-1)^a\cdot \varepsilon_{\po}^{2b} \cdot(-1)^{c} \cdot(-1)^{d}\cdot (-1)^{(u+1)e} \cdot  (-1)^{f}\cdot \varepsilon_{2\pt}^{f}\label{aa}\\
 					N_{\KK/k_1}(\chi^2)&=&
 					\varepsilon_{2}^{2a}\cdot \varepsilon_{\po}^{2b} \cdot(-1)^{c} \cdot(-1)^{d}\cdot \varepsilon_{2\po}^{e} \cdot  (-1)^{vf}\nonumber\\
 					N_{\KK/k_5}(\chi^2)&=& (-1)^{a}\cdot(-1)^{b}\cdot\varepsilon_{\pt}^{2c}
 					\cdot(-1)^d\cdot (-1)^e\varepsilon_{2\po}^{e} \cdot(-1)^{(v+1)e}\label{bb}                 
 				\end{eqnarray}
 			 	where $u$ and $v$ are defined in Lemma \ref{calcul} for $p=\po$ and $p=\pt$ respectively. Therefore 
 			 	\[
 				\left \{
 				\begin{array}{ccc}
 					b+d+ue & \equiv& 0 \pmod 2 \\
 					b+c+ue+vf & \equiv& 0 \pmod 2 \\
 					a+(u+1)e+(v+1)f & \equiv& 0 \pmod 2 \\
 					a+c+d+(u+1)e+f & \equiv& 0 \pmod 2 \\
 					c+d+vf & \equiv& 0 \pmod 2 \\
 					a+b+d+e+(v+1)f & \equiv& 0 \pmod 2 \\
 					
 				\end{array}
 				\right.
 				\]
 		 	Notice that $\sqrt{\varepsilon_{2\po}}\notin k_5$, and  $\sqrt{\varepsilon_{2\pt}}\notin k_6$. It follows from   \eqref{aa} and \eqref{bb}, that  we have $e=f=0$. So  the above system implies that $a=0$ and $b=c=d$. Thus, all the forms of $\chi^2$ are eliminated except the following: 
 				$$\chi^2=(\varepsilon_{\po}\varepsilon_{\pt}\varepsilon_{\po\pt} )^b.	 $$
 				As   according to Lemma \ref{squaretest} $\varepsilon_{\po}\varepsilon_{\pt}\varepsilon_{\po\pt}$ is a square in $\KK$, then 
 				$$E_{\KK}=\langle-1,  \varepsilon_{2}, \varepsilon_{\po},\varepsilon_{\pt}, \sqrt{\varepsilon_{2\po}},\sqrt{\varepsilon_{2\pt}}, \sqrt{\varepsilon_{2\po\pt}},\sqrt{\varepsilon_{\po}\varepsilon_{\pt}\varepsilon_{\po\pt}} \rangle.$$
 				
 				\item If $x\pm1$ is a square in $\NN$, we have  
 			 	$$E_{k_1}E_{k_2}E_{k_3}=\langle-1,  \varepsilon_{2}, \varepsilon_{\po},\varepsilon_{\pt}, \varepsilon_{\po\pt},\sqrt{\varepsilon_{2\po}}, \sqrt{\varepsilon_{2\pt}} ,\sqrt{\varepsilon_{2\po\pt}}   \rangle.$$ 
 			 	 Let   $\chi$ be an element of $\KK  $  such that  
 				then we have \begin{eqnarray}\label{chi17a}
 					\chi^2=\varepsilon_{2}^a\varepsilon_{\po}^b \varepsilon_{\pt}^c\varepsilon_{\po\pt}^d\sqrt{\varepsilon_{2\po}}^e \sqrt{\varepsilon_{2\pt}}^f\sqrt{\varepsilon_{2\po\pt}}^g,	 
 				\end{eqnarray}
 				where $a, b, c, d, e, f$ and $g$ are in $\{0, 1\}$.
 				By applying the norm maps $N_{\KK/k_6},N_{\KK/k_5}$ and $N_{\KK/k_2}$ we get : 
 				 	\begin{eqnarray}
 						N_{\KK/k_6}(\chi^2)&=&
 						(-1)^a\cdot \varepsilon_{\po}^{2b} \cdot (-1)^c\cdot (-1)^d \cdot (-1)^{(u+1)e} \cdot (-1)^f \cdot\varepsilon_{2\pt}^f \cdot \varepsilon_{2\po\pt}^g  \cdot(-1)^{a_1 g},\nonumber \\
 						N_{\KK/k_5}(\chi^2)&=&
 						(-1)^a\cdot (-1)^b\cdot \varepsilon_{\pt}^{2c} \cdot (-1)^d\cdot (-1)^{e} \cdot \varepsilon_{2\po}^e\cdot (-1)^{(v+1)f}\cdot\varepsilon_{2\po\pt}^f \cdot  (-1)^{a_2 f}, \nonumber  \\              
 						N_{\KK/k_2}(\chi^2)&=&
 					\varepsilon_{2}^{2a}(-1)^b \cdot \varepsilon_{\pt}^{2c}\cdot (-1)^d\cdot (-1)^{ue}\varepsilon_{2\pt}^e  \cdot (-1)^{a_3 e}, \nonumber 
 				\end{eqnarray}
 				
 					with $a_i\in \{0,1\}$ and  $u,v$  are defined in Lemma \ref{calcul}   for $p=\po$ and $p=\pt$ respectively.  As $\varepsilon_{2\pt}$ and $\varepsilon_{2\po\pt}$ are not squares in $k_6$ the from the first equality we have $f=g.$
 				 Also $\varepsilon_{2\po\pt}$ and $\varepsilon_{2\po}$ are not squares in $k_5$ then from the second equality we get $e=f$. Thus, we extract the following system : 
 			 	\[
 				\left \{
 				\begin{array}{ccc}
 					b+d+ue+a_3 e &\equiv& 0\pmod2, \\
 					a+c+d+(u+a_1) e&\equiv& 0\pmod2, \\
 					a+b+d+(v+a_2)e&\equiv& 0\pmod2. \\
 				 	\end{array}
 				\right.
 				\]
   So if $e=0$ we get from the above congruence  equations that $b=d=c$ and $a=0$ which gives $\chi^2=(\varepsilon_{\po} \varepsilon_{\pt}\varepsilon_{\po\pt})^b$, which admit a solution in $\KK$  according to Lemma \ref{squaretest} . Now if $e=1$, then
 				\begin{eqnarray}
 					\chi^2=\varepsilon_{2}^a\varepsilon_{\po}^b \varepsilon_{\pt}^c\varepsilon_{\po\pt}^d\sqrt{\varepsilon_{2\po}} \sqrt{\varepsilon_{2\pt}}\sqrt{\varepsilon_{2\po\pt}},	 
 				\end{eqnarray}
 				where   
 			 	\[
 			\left \{
 			\begin{array}{ccc}
 				b+d+u+a_3  &\equiv& 0\pmod2, \\
 				a+c+d+u+a_1 &\equiv& 0\pmod2, \\
 				a+b+d+v+a_2&\equiv& 0\pmod2. \\
 			 	\end{array}
 			\right.
 			\]
 		 From this we deduce the second part of the third item.

 			\end{enumerate}
 			\item Assume that $(n_1,n_2,n_3, n_4)=(-1, 1, 1,-1)$. By Lemmas \ref{fork1k2} and \ref{lmunit}, we have:
 			\begin{enumerate}[$\star$]
 				\item $\{\varepsilon_{2},\varepsilon_{\po}, \sqrt{\varepsilon_{2}\varepsilon_{\po}\varepsilon_{2\po}}\}$ is a FSU of $k_1$,
 				\item $\{\varepsilon_{2},\varepsilon_{\pt}, \sqrt{\varepsilon_{2\pt}}\}$ is a FSU of $k_2$,
 				\item $\{\varepsilon_{2},\varepsilon_{\po\pt}, \varepsilon_{2\po\pt}\}$ is a FSU of $k_3$.  
 			\end{enumerate} 
 			Then $$E_{k_1}E_{k_2}E_{k_3}=\langle-1,  \varepsilon_{2}, \varepsilon_{\po},\varepsilon_{\pt}, \varepsilon_{\po\pt},\varepsilon_{2\po\pt},\sqrt{\varepsilon_{2}\varepsilon_{\po}\varepsilon_{2\po}},\sqrt{\varepsilon_{2\pt}}    \rangle.$$
 			Note that by Remark \ref{twosquare} $\varepsilon_{\po\pt}$ is a square in $\KK$.  	So we consider $\chi \in \KK$ such that 
 			$$\chi^2=\varepsilon_{2}^a\varepsilon_{\po}^b \varepsilon_{\pt}^c\varepsilon_{2\po\pt}^d \sqrt{\varepsilon_{2\pt}}^e\sqrt{\varepsilon_{2}\varepsilon_{\po}\varepsilon_{2\po}}^f$$
 			where $a, b, c, d, e$ and $f$ are in $\{0, 1\}$. 
 		Proceeding similarly as above  by applying the norm maps $N_{\KK/k_2},N_{\KK/k_3}, N_{\KK/k_4}$ and $N_{\KK/k_5}$, we check that $e=f=0$ and $a=b=c=d$.
 This eliminates all the forms of $\chi^2$ except the following:  
 			$$\chi^2=(\varepsilon_{2}\varepsilon_{\po}\varepsilon_{\pt}\varepsilon_{2\po\pt})^a .$$
 			By Lemma \ref{squaretest}, $\varepsilon_{2}\varepsilon_{\po}\varepsilon_{\pt}\varepsilon_{2\po\pt}$ is a square in $\KK$.   Therefore, 
 			$$E_{\KK}=\langle-1,  \varepsilon_{2}, \varepsilon_{\po},\varepsilon_{\pt}, \sqrt{\varepsilon_{\po\pt}},\sqrt{\varepsilon_{2}\varepsilon_{\po}\varepsilon_{2\po}},\sqrt{\varepsilon_{2\pt}}, \sqrt{\varepsilon_{2}\varepsilon_{\po}\varepsilon_{\pt}\varepsilon_{2\po\pt}}    \rangle.$$

 			\item Assume that $(n_1,n_2,n_3, n_4)=(-1,-1, 1,-1)$. So   by Lemmas \ref{fork1k2} and \ref{lmunit} a F.S.U of $k_i$ is \begin{enumerate}[$\star$]
 				\item $\{\varepsilon_{2},\varepsilon_{p_i}, \sqrt{\varepsilon_{2}\varepsilon_{p_i}\varepsilon_{2p_i}}\}$ for $i=1,2$.
 				\item $\{\varepsilon_{2},\varepsilon_{\po\pt}, \varepsilon_{2\po\pt}\}$ for $i=3$.  
 			\end{enumerate} 
 			Then 
 			$$E_{k_1}E_{k_2}E_{k_3}=\langle-1,  \varepsilon_{2}, \varepsilon_{\po},\varepsilon_{\pt}, \varepsilon_{\po\pt},\varepsilon_{2\po\pt},\sqrt{\varepsilon_{2}\varepsilon_{\po}\varepsilon_{2\po}},\sqrt{\varepsilon_{2}\varepsilon_{\pt}\varepsilon_{2\pt}}    \rangle.$$
 			By Remark \ref{twosquare} $\varepsilon_{\po\pt}$ is a square in $\KK$.  	So we consider $\chi \in \KK$ such that 
 			$$\chi^2=\varepsilon_{2}^a\varepsilon_{\po}^b \varepsilon_{\pt}^c\varepsilon_{2\po\pt}^d \sqrt{\varepsilon_{2}\varepsilon_{\po}\varepsilon_{2\po}}^e\sqrt{\varepsilon_{2}\varepsilon_{\pt}\varepsilon_{2\pt}}^f$$
 			where $a, b, c, d, e$ and $f$ are in $\{0, 1\}$.   
 			 	By applying the norm maps $N_{\KK/k_2},N_{\KK/k_3}$ and  $N_{\KK/k_4}$ we check  that  $e=f=0$ and $a=b=c=d$.
 			Thus eliminates all the forms of $\chi^2$ except the following 
 				$$\chi^2=\varepsilon_{2}^a\varepsilon_{\po}^a \varepsilon_{\pt}^a\varepsilon_{2\po\pt}^a.  $$
 			  As according to Lemma \ref{squaretest} $\sqrt{\varepsilon_{2}\varepsilon_{\po}\varepsilon_{\pt}\varepsilon_{2\po\pt}}$, we deduce that
 		 	$$E_{\KK}=\langle-1,  \varepsilon_{2}, \varepsilon_{\po},\varepsilon_{\pt}, \sqrt{\varepsilon_{\po\pt}},\sqrt{\varepsilon_{2}\varepsilon_{\po}\varepsilon_{2\po}},\sqrt{\varepsilon_{2}\varepsilon_{\pt}\varepsilon_{2\pt}}, \sqrt{\varepsilon_{2}\varepsilon_{\po}\varepsilon_{\pt}\varepsilon_{2\po\pt}}    \rangle.$$

 			\item Assume that $(n_1,n_2,n_3, n_4)=( 1, 1, 1, -1)$. By Lemmas \ref{fork1k2} and \ref{lmunit} the F.S.U of $k_i$ is \begin{enumerate}[$\star$]
 				\item $\{\varepsilon_{2},\varepsilon_{p_i}, \sqrt{\varepsilon_{2p_i}}\}$ for $i=1,2$.
 				\item $\{\varepsilon_{2},\varepsilon_{\po\pt}, \varepsilon_{2\po\pt} \}$ for $i=3$.  
 			\end{enumerate} 
 			So we have 
 			$$E_{k_1}E_{k_2}E_{k_3}=\langle-1,  \varepsilon_{2}, \varepsilon_{\po},\varepsilon_{\pt}, \varepsilon_{\po\pt},\varepsilon_{2\po\pt},\sqrt{\varepsilon_{2\po}}, \sqrt{\varepsilon_{2\pt}}   \rangle.$$
 			Noting that  $\sqrt{\varepsilon_{\po\pt}}$   and $\sqrt{\varepsilon_{2}\varepsilon_{\po}\varepsilon_{\pt}\varepsilon_{2\po\pt}}$ are  in $\KK$ (cf. Remark \ref{twosquare} and Lemma \ref{squaretest}), then with the same procedure as in proof of the first case of the third item gives:
 			$$E_{\KK}=\langle-1,  \varepsilon_{2}, \varepsilon_{\po},\varepsilon_{\pt}, \sqrt{\varepsilon_{\po\pt}},\sqrt{\varepsilon_{2\po}}, \sqrt{\varepsilon_{2\pt}}, \sqrt{\varepsilon_{2}\varepsilon_{\po}\varepsilon_{\pt}\varepsilon_{2\po\pt}}   \rangle.$$

 			\item Now assume that $(n_1,n_2,n_3, n_4)=(-1, 1, 1, 1)$. So a F.S.U of $k_i$ is 
 			\begin{enumerate}[$\star$]
 				\item $\{\varepsilon_{2},\varepsilon_{\po}, \sqrt{\varepsilon_{2}\varepsilon_{\po}\varepsilon_{2\po}}\}$ for $i=1$.
 				\item $\{\varepsilon_{2},\varepsilon_{\pt}, \sqrt{\varepsilon_{2\pt}}\}$ for $i=2$.
 				\item A F.S.U of $k_3$ is given by $\{\varepsilon_{2},\varepsilon_{\po\pt}, \sqrt{\varepsilon_{2\po\pt}}\}$ or $\{\varepsilon_{2},\varepsilon_{\po\pt},\sqrt{\varepsilon_{\po\pt}\varepsilon_{2\po\pt}}\}$ according to whether $x\pm1$ is a square in $\NN$ or not.
 				
 			\end{enumerate}
 			\begin{enumerate}[$\bullet$]
 				\item If $x\pm 1$ is a square in $\NN$, then
 			 $$E_{k_1}E_{k_2}E_{k_3}=\langle-1,  \varepsilon_{2}, \varepsilon_{\po},\varepsilon_{\pt}, \varepsilon_{\po\pt},\sqrt{\varepsilon_{2\pt}},\sqrt{\varepsilon_{2\po\pt}},\sqrt{\varepsilon_{2}\varepsilon_{\po}\varepsilon_{2\po}}    \rangle.$$
 			So let us consider $\chi\in \KK$ such that 
 			$$\chi^2=\varepsilon_{2}^a\varepsilon_{\po}^b \varepsilon_{\pt}^c\sqrt{\varepsilon_{2\pt}}^d \sqrt{\varepsilon_{2\po\pt}}^e\sqrt{\varepsilon_{2}\varepsilon_{\po}\varepsilon_{2\po}}^f,$$
 			where $a, b, c, d, e$ and $f$ are in $\{0, 1\}$. Thus, we have: 
 			\begin{eqnarray}
 				N_{\KK/k_2}(\chi^2)&=&
 				\varepsilon_2^{2a}\cdot (-1)^b \cdot\varepsilon_{\pt}^{2c}\cdot\varepsilon_{2\pt}^d \cdot (-1)^{a_1 e} \cdot  \varepsilon_{2}^f (-1)^{b_1 f}\label{n1},\\
 				N_{\KK/k_3}(\chi^2)&=&
 				\varepsilon_2^{2a}\cdot (-1)^b \cdot(-1)^c \cdot  (-1)^{du} \cdot \varepsilon_{2\po\pt}^e,\nonumber \\
 				N_{\KK/k_4}(\chi^2)&=&
 				(-1)^a\cdot \varepsilon_{\po}^{2b} \cdot\varepsilon_{\pt}^{2c} \cdot  (-1)^{(u+1)d} \cdot  (-1)^{a_2 e},\nonumber\\
 				N_{\KK/k_6}(\chi^2)&=&
 				(-1)^a\cdot \varepsilon_{\po}^{2b} \cdot(-1)^{c} \cdot(-1)^{d}\cdot \varepsilon_{2\pt}^{d}\cdot(-1)^{a_3 e} \cdot  \varepsilon_{2\po\pt}^{e},\label{..}\\
 				N_{\KK/k_1}(\chi^2)&=&
 				\varepsilon_{2}^{2a}\cdot \varepsilon_{\po}^{2b} \cdot(-1)^{c} \cdot(-1)^{du}\cdot   (-1)^{a_4 e},\nonumber\\
 				N_{\KK/k_5}(\chi^2)&=& (-1)^{a}\cdot(-1)^{b}\cdot\varepsilon_{\pt}^{2c}
 				\cdot(-1)^{(u+1)d}\cdot \varepsilon_{2\po\pt}^{e} \cdot(-1)^{a_5 e},\label{n6}                 
 			\end{eqnarray}
 		 	where $u$ is given by Lemma \ref{calcul} for $p=p_2$ and $a_i \in \{0,1\}$. Therefore 
 		 	\[
 			\left \{
 			\begin{array}{ccc}
 				b+a_1 e +b_1 f & \equiv& 0 \pmod 2, \\
 				b+c+du & \equiv& 0 \pmod 2 \\
 				a+(u+1)d +a_2 e & \equiv& 0 \pmod 2, \\
 				a+c+d+a_3 e & \equiv& 0 \pmod 2, \\
 				c+du+a_4 e & \equiv& 0 \pmod 2, \\
 				a+b+d(u+1)+a_5 e & \equiv& 0 \pmod 2. \\
 				
 			\end{array}
 			\right.
 			\]
 			As $\varepsilon_2$ is not a square in $k_2$ we get from \eqref{n1} $f=0$, and from \eqref{n6} we get $e=0$, in fact $\varepsilon_{2\po\pt}$ is not a square in $k_5$. Then the above system of congruence 
 			equations implies that $a=b=c=d=0$.  Notice that $\sqrt{\varepsilon_{\po\pt}}$ is a square in $\KK$. Thus,
 			$$E_{\KK}=\langle-1,  \varepsilon_{2}, \varepsilon_{\po},\varepsilon_{\pt},\sqrt{\varepsilon_{\po\pt}},  \sqrt{\varepsilon_{2\po\pt}} ,\sqrt{\varepsilon_{2\pt}} , \sqrt{\varepsilon_{2}\varepsilon_{\po}\varepsilon_{2\po}} \rangle.$$ 
 			
 			\item If $x\pm 1$ is \textbf{not} a square in $\NN$, then 
 			$$E_{k_1}E_{k_2}E_{k_3}=\langle-1,  \varepsilon_{2}, \varepsilon_{\po},\varepsilon_{\pt},\varepsilon_{\po\pt},\sqrt{\varepsilon_{2}\varepsilon_{\po}\varepsilon_{2\po}},\sqrt{\varepsilon_{2 \pt}}, \sqrt{\varepsilon_{\po\pt}\varepsilon_{2\po\pt}}   \rangle.$$
 		 	Notice that     $\sqrt{\varepsilon_{\po\pt}}\in \KK$ $($cf. Remark \ref{twosquare}$)$. Thus,  we consider $\chi\in \KK$ such that  
 			$$\chi^2=\varepsilon_{2}^a\varepsilon_{\po}^b \varepsilon_{\pt}^c\sqrt{\varepsilon_{2}\varepsilon_{\po}\varepsilon_{2\po}}^d \sqrt{\varepsilon_{2\pt}}^e\sqrt{\varepsilon_{\po\pt}\varepsilon_{2\po\pt}}^f,$$
 			where $a, b, c, d, e$ and $f$ are in $\{0, 1\}$. 
 		Let	 $u$ be as in   Lemma \ref{calcul} for $p=p_2$.
 			Therefore, we have: 
 			\begin{eqnarray}
 				N_{\KK/k_2}(\chi^2)&=&
 				\varepsilon_2^{2a}\cdot (-1)^b \cdot\varepsilon_{\pt}^{2c}\cdot\varepsilon_{2}^{d} \cdot (-1)^{a_1 d} \cdot  \varepsilon_{2\pt}^e (-1)^{a_2 f},\label{first}\\
 				N_{\KK/k_3}(\chi^2)&=&
 				\varepsilon_2^{2a}\cdot (-1)^b \cdot(-1)^c \cdot  (-1)^{ue} \cdot (\varepsilon_{\po\pt}\varepsilon_{2\po\pt})^f,\nonumber \\
 				N_{\KK/k_4}(\chi^2)&=&
 				(-1)^a\cdot \varepsilon_{\po}^{2b} \cdot\varepsilon_{\pt}^{2c} \cdot  (-1)^{(u+1)e}\cdot \varepsilon_{  \po\pt}^f \cdot  (-1)^{a_3 f},\nonumber\\
 				N_{\KK/k_6}(\chi^2)&=&
 				(-1)^a\cdot \varepsilon_{\po}^{2b} \cdot(-1)^{c} \cdot(-1)^{e}\cdot \varepsilon_{2\pt}^{e}\cdot \varepsilon_{  2\po\pt}^f \cdot(-1)^{a_4 f}\label{five}, \\
 				N_{\KK/k_1}(\chi^2)&=&
 				\varepsilon_{2}^{2a}\cdot \varepsilon_{\po}^{2b} \cdot(-1)^{c} \cdot(-1)^{ue}\cdot   (-1)^{a_5 f},\nonumber\\
 				N_{\KK/k_5}(\chi^2)&=& (-1)^{a}\cdot(-1)^{b}\cdot\varepsilon_{\pt}^{2c}
 				\cdot(-1)^{(u+1)e}\cdot \varepsilon_{2\po\pt}^{f} \cdot(-1)^{a_6 e},\label{six}                 
 			\end{eqnarray}
 	 where $a_i \in \{0,1\}$.	As $\varepsilon_{2}$ is not a square in $k_2$, then Equality \eqref{first} gives $d=0$. Furthermore, we have:
 		 	\[
 			\left \{
 			\begin{array}{ccl}
 				b+a_2 f & \equiv& 0 \pmod 2, \\
 				b+c+ue & \equiv& 0 \pmod 2 ,\\
 				a+(u+1)e +a_3 f & \equiv& 0 \pmod 2, \\
 				a+c+e+a_4 f & \equiv& 0 \pmod 2, \\
 				c+ue+a_5 f & \equiv& 0 \pmod 2, \\
 				a+b+(u+1)e+a_6 f & \equiv& 0 \pmod 2 .\\
 				
 			\end{array}
 			\right.
 			\]
 		As $(x\pm1)$ is not a square in $\NN$, so    either $\sqrt{\varepsilon_{  2\po\pt}}\in k_5$ or $\sqrt{\varepsilon_{  2\po\pt}}\in k_6$ $($cf. Remark \ref{twosquare}$)$.
 		Thus,  one of the equalities \eqref{five} and \eqref{six} implies that $f=0$ and so the above system is equivalent to: 
 			\[
 			\left \{
 			\begin{array}{ccl}
 				b & =& 0 , \\

 				a+c+e & \equiv& 0 \pmod 2, \\
 				c+ue & \equiv& 0 \pmod 2 ,\\
 				a+ (u+1)e & \equiv& 0 \pmod 2.
 				
 			\end{array}
 			\right.
 			\]
 		Furthermore,  we have $\chi^2=\varepsilon_{2}^a\varepsilon_{\po}^b \varepsilon_{\pt}^c \sqrt{\varepsilon_{2\pt}}^e$. 	Now if $e=0$, then we deduce that $a=b=c=0$. If $e=1$, then $c=u=a+1$ and 
 		 	$$\chi^2=\varepsilon_{2}^a \varepsilon_{\pt}^{a+1} \sqrt{\varepsilon_{2\pt}}.$$
 		 It follows that 
 		 	$$E_{\KK}=\langle-1,  \varepsilon_{2}, \varepsilon_{\po},\varepsilon_{\pt}, \sqrt{\varepsilon_{\po\pt}},\sqrt{\varepsilon_{  2\po\pt}},\sqrt{\varepsilon_{2}\varepsilon_{\po}\varepsilon_{2\po}},
 		 	\sqrt{\varepsilon_{2}^{a\alpha}\varepsilon_{\pt}^{(1+a)\alpha}\sqrt{\varepsilon_{2\pt}}^{\gamma+1} }   \rangle.$$ 
 		 	where  $a$, $\alpha$ and  $\gamma$ are elements in $  \{0,1\}$ such that $a \equiv u+1 \pmod 2$ and $\alpha\not=\gamma $. Notice that $\gamma=0$ if and only if 
 			$\varepsilon_{2}^a \varepsilon_{\pt}^{a+1} \sqrt{\varepsilon_{2\pt}}$ is a square in $\KK$.

 		\end{enumerate}
 				\item  Assume now that $(n_1,n_2,n_3, n_4)=(-1,-1,1,1)$. According to Lemmas \ref{fork1k2} and \ref{lmunit} the F.S.U of $k_i$ is  $\{\varepsilon_{2},\varepsilon_{p_i}, \sqrt{\varepsilon_{2}\varepsilon_{p_i}\varepsilon_{2p_i}}\}$ for $i=1,2$. We have:
 			 	\begin{enumerate}[$\bullet$]
 					\item If $x\pm1$ is a square in $\NN$, then $\{\varepsilon_{2},\varepsilon_{\po\pt}, \sqrt{\varepsilon_{2\po\pt}}\}$ is the F.S.U of $k_3$.  
 			 	Then $$E_{k_1}E_{k_2}E_{k_3}=\langle-1,  \varepsilon_{2}, \varepsilon_{\po},\varepsilon_{\pt}, \varepsilon_{\po\pt},\sqrt{\varepsilon_{2\po\pt}},\sqrt{\varepsilon_{2}\varepsilon_{\po}\varepsilon_{2\po}},\sqrt{\varepsilon_{2}\varepsilon_{\pt}\varepsilon_{2\pt}}    \rangle.$$

 			By Remark \ref{twosquare} $\varepsilon_{\po\pt}$ is a square in $\KK$.  	So let $\chi \in \KK$ such that 
 			 \begin{eqnarray}\label{chi17}
 				\chi^2=\varepsilon_{2}^a\varepsilon_{\po}^b \varepsilon_{\pt}^c\sqrt{\varepsilon_{2\po\pt}}^d\sqrt{\varepsilon_{2}\varepsilon_{\po}\varepsilon_{2\po}}^e \sqrt{\varepsilon_{2}\varepsilon_{\pt}\varepsilon_{2\pt}}^f,	 
 			\end{eqnarray}
 			where $a, b, c, d, e$ and $f$ are in $\{0, 1\}$.  
 			Applying norm maps $N_{\KK/k_2},N_{\KK/k_3},N_{\KK/k_4}, N_{\KK/k_5}$ and $N_{\KK/k_6}$, we check that $a=b=c=d=e=f=0$. This means that all forms of $\chi^2$ are eliminated.  
 			Therefore, 
 			$$E_{\KK}=\langle-1,  \varepsilon_{2}, \varepsilon_{\po},\varepsilon_{\pt},\sqrt{\varepsilon_{\po\pt}},  \sqrt{\varepsilon_{2\po\pt}} ,  \sqrt{\varepsilon_{2}\varepsilon_{\po}\varepsilon_{2\po}}, \sqrt{\varepsilon_{2}\varepsilon_{\pt}\varepsilon_{2\pt}} \rangle.$$
 			
 			\item If $x\pm1$ is \textbf{not} a square in $\NN$, then $\{\varepsilon_{2},\varepsilon_{\po\pt}, \sqrt{\varepsilon_{\po\pt}\varepsilon_{2\po\pt}}\}$ is the F.S.U of $k_3$. Thus,    
 			 	$$E_{k_1}E_{k_2}E_{k_3}=\langle-1,  \varepsilon_{2}, \varepsilon_{\po},\varepsilon_{\pt},\  \varepsilon_{\po\pt},\sqrt{\varepsilon_{2}\varepsilon_{\po}\varepsilon_{2\po}}, \sqrt{\varepsilon_{2}\varepsilon_{\pt}\varepsilon_{2\pt}}, ,\sqrt{\varepsilon_{\po\pt}\varepsilon_{2\po\pt}}   \rangle.$$ 
 			 	As  $\varepsilon_{\po\pt}$ is a square in $\KK$ $($cf. Remark \ref{twosquare}$)$, then let us consider $\chi \in \KK  $ such that: 
 			\begin{eqnarray}
 				\chi^2=\varepsilon_{2}^a\varepsilon_{\po}^b \varepsilon_{\pt}^c\sqrt{\varepsilon_{2}\varepsilon_{\po}\varepsilon_{2\po}}^d\sqrt{\varepsilon_{2}\varepsilon_{\pt}\varepsilon_{2\pt}}^e\sqrt{\varepsilon_{\po\pt}\varepsilon_{2\po\pt}}^f,	 
 			\end{eqnarray}
 			where $a, b, c, d, e$ and $f$ are in $\{0, 1\}$. Then by applying the six norm maps we get 
 			\begin{eqnarray}
 				N_{\KK/k_2}(\chi^2)&=&
 				\varepsilon_2^{2a}\cdot (-1)^b \cdot\varepsilon_{\pt}^{2c}\cdot (-1)^{a_1 d}\cdot\varepsilon_{2}^d \cdot (\varepsilon_{2}\varepsilon_{\pt}\varepsilon_{2\pt})^{ e} \cdot   (-1)^{b_1 f},\label{firstt}\\
 				N_{\KK/k_3}(\chi^2)&=&
 				\varepsilon_2^{2a}\cdot (-1)^b \cdot(-1)^c \cdot \varepsilon_{2}^e \cdot (-1)^{a_2 e} \cdot (\varepsilon_{\po\pt}\varepsilon_{2\po\pt})^f, \label{secondd}\\
 				N_{\KK/k_4}(\chi^2)&=&
 				(-1)^a\cdot \varepsilon_{\po}^{2b} \cdot\varepsilon_{\pt}^{2c} \cdot  \varepsilon_{  \po\pt}^{f} \cdot  (-1)^{a_3 f},\nonumber\\
 				N_{\KK/k_6}(\chi^2)&=&
 				(-1)^a\cdot \varepsilon_{\po}^{2b} \cdot(-1)^{c}\cdot \varepsilon_{  2\po\pt}^{f} \cdot(-1)^{a_4 f},\label{fivee}\\
 				N_{\KK/k_1}(\chi^2)&=&
 				\varepsilon_{2}^{2a}\cdot \varepsilon_{\po}^{2b} \cdot(-1)^{c} \cdot(-1)^{a_5 f},\nonumber\\
 				N_{\KK/k_5}(\chi^2)&=& (-1)^{a}\cdot(-1)^{b}\cdot\varepsilon_{\pt}^{2c}\cdot \varepsilon_{2\po\pt}^{f}
 				\cdot(-1)^{a_6 f }, \label{sixx}                
 			\end{eqnarray}
 		where the $a_i$ and $b_1$ are in $\{0,1\}$. From the first equality \eqref{firstt}, we deduce that $d=0$ because $\varepsilon_{2}$	is not a square in $k_2$. The same goes for $e=0$ from the second equality \eqref{secondd}, and as in proof of   $7)$, we deduce from \eqref{fivee}, \eqref{sixx} and the that fact that  
 		either  $\sqrt{\varepsilon_{  2\po\pt}}\in  k_5$ or  $\sqrt{\varepsilon_{  2\po\pt}}\in k_6  $ that we have $f=0$. By putting together the congruence equations we deduce that $a=b=c=d=e=f=0$. Thus, all the forms of $\chi^2$ are eliminated. Hence, as above, we get
 		 	$$E_{\KK} =\langle-1,  \varepsilon_{2}, \varepsilon_{\po},\varepsilon_{\pt},\  \sqrt{\varepsilon_{\po\pt}},\sqrt{\varepsilon_{2}\varepsilon_{\po}\varepsilon_{2\po}}, \sqrt{\varepsilon_{2}\varepsilon_{\pt}\varepsilon_{2\pt}}, \sqrt{ \varepsilon_{2\po\pt}}   \rangle.$$

 		\end{enumerate}

 			\item Finally assume that  $(n_1,n_2,n_3, n_4)=( 1, 1, 1,  1)$. By Lemmas \ref{fork1k2} and \ref{lmunit}, we have:  
 			\begin{enumerate}[$\star$]
 				\item A F.S.U of $k_i$ is given by $\{\varepsilon_{2},\varepsilon_{p_i}, \sqrt{\varepsilon_{2p_i}}\}$ for $i\in \{1,2\}$, 
 				\item A F.S.U of $k_3$ is given by $\{\varepsilon_{2},\varepsilon_{\po\pt}, \sqrt{\varepsilon_{2\po\pt}}\}$ or $\{\varepsilon_{2},\varepsilon_{\po\pt},\sqrt{\varepsilon_{\po\pt}\varepsilon_{2\po\pt}}\}$ according to whether $x\pm1$ is a square in $\NN$ or not.
 			\end{enumerate} 
 			\begin{enumerate}[$\bullet$]
 				\item If $x\pm1$ is a square in $\NN$, we have  
 			 	$$E_{k_1}E_{k_2}E_{k_3}=\langle-1,  \varepsilon_{2}, \varepsilon_{\po},\varepsilon_{\pt},\varepsilon_{\po\pt},\sqrt{\varepsilon_{2\po}}, \sqrt{\varepsilon_{2\pt}} ,\sqrt{\varepsilon_{2\po\pt}}   \rangle.$$ 
 			 	By Remark \ref{twosquare}, $\varepsilon_{\po\pt}$ is a square in $\KK$. So let us consider $\chi \in \KK  $ such that: 
 				\begin{eqnarray}
 					\chi^2=\varepsilon_{2}^a\varepsilon_{\po}^b \varepsilon_{\pt}^c\sqrt{\varepsilon_{2\po}}^d\sqrt{\varepsilon_{2\pt}}^e\sqrt{\varepsilon_{2\po\pt}}^f,	 
 				\end{eqnarray}
 				where $a, b, c, d, e$ and $f$ are in $\{0, 1\}$.   We shall proceed as above, by using the
 				norm maps, and Lemma \ref{calcul} to compute the norms of $\sqrt{\varepsilon_{2\po}}$ and $\sqrt{\varepsilon_{2\pt}}$ which we will denote by $u$ and $v$, respectively, for their parameters.
 			 
 				\noindent\ding{229}   Let us start	by applying   the norm map $N_{\KK/k_2}=1+\tau_2$. We have
 				\begin{eqnarray*}
 					N_{\KK/k_2}(\chi^2)&=&
 					\varepsilon_{2}^{2a}\cdot(-1)^b \cdot \varepsilon_{\pt}^{2c}\cdot (-1)^{du} \cdot \varepsilon_{2\pt}^{e} \cdot (-1)^{a_1 f} \geq 0
 				\end{eqnarray*}
 				for some $a_1 \in \{0,1\}$.	Thus, $b+du+a_1 f\equiv 0\pmod2$.

 				\noindent\ding{229}   By applying   the norm map $N_{\KK/k_3}=1+\tau_2\tau_3$. We have
 				\begin{eqnarray*}
 					N_{\KK/k_3}(\chi^2)&=&
 					\varepsilon_{2}^{2a}\cdot(-1)^b \cdot(-1)^c  \cdot (-1)^{du} \cdot (-1)^{ve}\cdot \varepsilon_{2\po\pt}^{f}
 				\end{eqnarray*}
 				thus $b+c+du+ve\equiv 0\pmod2$.
 				
 				\noindent\ding{229}   By applying   the norm map $N_{\KK/k_4}=1+\tau_1$. We have
 				\begin{eqnarray*}
 					N_{\KK/k_4}(\chi^2)&=&
 					(-1)^a\cdot \varepsilon_{\po}^{2b}\cdot\varepsilon_{\pt}^{2c} \cdot(-1)^{d(u+1)}\cdot (-1)^{e(v+1)} \cdot(-1)^{a_2 f} 
 				\end{eqnarray*}
 				for some $a_2 \in \{0,1\}$.	Thus, $a+d(u+1)+e(v+1)+a_2 f\equiv 0\pmod2$.

 				\noindent\ding{229}   By applying   the norm map $N_{\KK/k_6}=1+\tau_1\tau_3$. We have
 				\begin{eqnarray*}
 					N_{\KK/k_6}(\chi^2)&=&
 					(-1)^a\cdot \varepsilon_{\po}^{2b}\cdot(-1)^c \cdot(-1)^{d(u+1)}\cdot (-1)^{e} \cdot \varepsilon_{2\pt}^{e}\cdot\varepsilon_{2\po\pt}^f (-1)^{a_3 f}
 				\end{eqnarray*}
 				for some $a_3 \in \{0,1\}$.	Thus, $a+c+d(u+1)+e+a_3 f\equiv 0\pmod2$. As $\varepsilon_{2\pt}$ and $\varepsilon_{2\po\pt}$ are not squares in $k_6$ then $e=f.$
 				
 				\noindent\ding{229}   By applying   the norm map $N_{\KK/k_5}=1+\tau_1\tau_2$. We have
 				\begin{eqnarray*}
 					N_{\KK/k_5}(\chi^2)&=&
 					(-1)^a\cdot (-1)^b \cdot \varepsilon_{\pt}^{2c}\cdot(-1)^d\cdot\varepsilon_{2\po}^{d} \cdot(-1)^{e(v+1)} \cdot \varepsilon_{2\po\pt}^f (-1)^{a_4 f} 
 				\end{eqnarray*}
 				for some $a_4 \in \{0,1\}$.	Thus, $a+b+d+e(v+1)+a_4 f\equiv 0\pmod2$. As $\varepsilon_{2\po}$ and $\varepsilon_{2\po\pt}$ are not squares in $k_6$ then $d=f.$ Hence
 			 	\begin{eqnarray}
 					\chi^2=\varepsilon_{2}^a\varepsilon_{\po}^b \varepsilon_{\pt}^c\sqrt{\varepsilon_{2\po}\varepsilon_{2\pt}\varepsilon_{2\po\pt}}^d,	 
 				\end{eqnarray}
 			 	By applying the other norms, we deduce nothing. So we have   our result for this case.

 				\item If $x\pm1$ is not a square in $\NN$, then 
 				$$E_{k_1}E_{k_2}E_{k_3}=\langle-1,  \varepsilon_{2}, \varepsilon_{\po},\varepsilon_{\pt},\varepsilon_{\po\pt},\sqrt{\varepsilon_{2\po}}, \sqrt{\varepsilon_{2\pt}} ,{\sqrt{\varepsilon_{\po\pt}\varepsilon_{2\po\pt}}}   \rangle.$$   
 				As by Remark \ref{twosquare}, $\varepsilon_{\po\pt}$ is a square in $\KK$, we    consider $\chi \in \KK  $ such that:
 				\begin{eqnarray}
 					\chi^2=\varepsilon_{2}^a\varepsilon_{\po}^b \varepsilon_{\pt}^c\sqrt{\varepsilon_{2\po}}^d\sqrt{\varepsilon_{2\pt}}^e\sqrt{\varepsilon_{2\po\pt}\varepsilon_{\po\pt}}^f,	 
 				\end{eqnarray}
 				where $a, b, c, d, e$ and $f$ are in $\{0, 1\}$.   Let $u$ (resp. $v$) be the parameter in  
 				Lemma \ref{calcul} for $\sqrt{\varepsilon_{2\po}}$,  i.e. $p=p_1$  (resp. $\sqrt{\varepsilon_{2\pt}}$, i.e. $p=p_2$).\\
 				\noindent\ding{229}   Let us start	by applying   the norm map $N_{\KK/k_2}=1+\tau_2$. We have
 				\begin{eqnarray*}
 					N_{\KK/k_2}(\chi^2)&=&
 					\varepsilon_{2}^{2a}\cdot(-1)^b \cdot \varepsilon_{\pt}^{2c}\cdot (-1)^{du} \cdot \varepsilon_{2\pt}^{e} \cdot (-1)^{a_1 f} 
 				\end{eqnarray*}
 				for some $a_1 \in \{0,1\}$.	Thus, $b+du+a_1 f\equiv 0\pmod2$.\\
 				\noindent\ding{229}   By applying   the norm map $N_{\KK/k_3}=1+\tau_2\tau_3$, we get
 				\begin{eqnarray*}
 					N_{\KK/k_3}(\chi^2)&=&
 					\varepsilon_{2}^{2a}\cdot(-1)^b \cdot(-1)^c  \cdot (-1)^{du} \cdot (-1)^{ve}\cdot (\varepsilon_{\po\pt}\varepsilon_{2\po\pt})^{f}
 				\end{eqnarray*}
 				thus $b+c+du+ve\equiv 0\pmod2$.\\
 				\noindent\ding{229}   By applying   the norm map $N_{\KK/k_4}=1+\tau_1$, we get
 				\begin{eqnarray*}
 					N_{\KK/k_4}(\chi^2)&=&
 					(-1)^a\cdot \varepsilon_{\po}^{2b}\cdot\varepsilon_{\pt}^{2c} \cdot(-1)^{d(u+1)}\cdot (-1)^{e(v+1)}\cdot \varepsilon_{\po\pt}^f \cdot(-1)^{a_2 f}
 				\end{eqnarray*}
 				for some $a_2 \in \{0,1\}$.	Thus, $a+d(u+1)+e(v+1)+a_2 f\equiv 0\pmod2$.\\
 				\noindent\ding{229}   By applying   the norm map $N_{\KK/k_6}=1+\tau_1\tau_3$, we get
 				\begin{eqnarray}\label{eqqqq32}
 					N_{\KK/k_6}(\chi^2)&=&
 					(-1)^a\cdot \varepsilon_{\po}^{2b}\cdot(-1)^c \cdot(-1)^{d(u+1)}\cdot (-1)^{e} \cdot \varepsilon_{2\pt}^{e}\cdot\varepsilon_{2\po\pt}^f (-1)^{a_3 f} 
 				\end{eqnarray}
 				for some $a_3 \in \{0,1\}$.	Thus, $a+c+d(u+1)+e+a_3 f\equiv 0\pmod2$. \\
 				\noindent\ding{229}   Let now apply  the norm map $N_{\KK/k_5}=1+\tau_1\tau_2$. We have
 				\begin{eqnarray}
 					N_{\KK/k_5}(\chi^2)&=&
 					(-1)^a\cdot (-1)^b \cdot \varepsilon_{\pt}^{2c}\cdot(-1)^d\cdot\varepsilon_{2\po}^{d} \cdot(-1)^{e(v+1)} \cdot \varepsilon_{2\po\pt}^f (-1)^{a_4 f}\label{eqqqq34} 
 				\end{eqnarray}
 				for some $a_4 \in \{0,1\}$.	Thus, $a+b+d+e(v+1)+a_4 f\equiv 0\pmod2$\\
 				\noindent\ding{229}   Finally by applying   the norm map $N_{\KK/k_1}=1+\tau_3$, we get
 				\begin{eqnarray*}
 					N_{\KK/k_1}(\chi^2)&=&\varepsilon_{2}^{2a}\cdot\varepsilon_{\po}^{2b}\cdot
 					(-1)^c\cdot\varepsilon_{2\po}^{d} (-1)^{ev} \cdot (-1)^{a_5 f}  
 				\end{eqnarray*}
 				for some $a_5 \in \{0,1\}$.	Thus, $c+ev+a_5 d\equiv 0\pmod2$.\\
 				So we deduce from equalities \eqref{eqqqq32} and \eqref{eqqqq34} that we have ($e=f$ and $d=0$) or ($d=f$ and $e=0$) according to whether $\sqrt{\varepsilon_{2\po\pt}}\in k_5$ or $\sqrt{\varepsilon_{2\po\pt}}\in k_6$ following the second or the third form of  $\sqrt{\varepsilon_{2\po\pt}}$ in Remark \ref{twosquare}. Thus, 
 				$\chi^2=\varepsilon_{2}^a\varepsilon_{\po}^b \varepsilon_{\pt}^c\sqrt{\varepsilon_{2\pt}\varepsilon_{\po\pt}\varepsilon_{2\po\pt}}^e$ in the first case, and 
 				$\chi^2=\varepsilon_{2}^a\varepsilon_{\po}^b \varepsilon_{\pt}^c\sqrt{\varepsilon_{2\po}\varepsilon_{\po\pt}\varepsilon_{2\po\pt}}^d$ in the second case.
 				Hence the last item.
 				 		 		
 			\end{enumerate}
 		\end{enumerate}
 	\end{proof}

 	\section{\bf The case: $\po \equiv 1 \pmod8$, $  \pt\equiv 1 \pmod4$   and  $(n_3, n_4)=(-1,-1)$}$\,$ 		\label{section23}		
 	
 	In this section, we compute the unit group of $\KK=\QQ(\sqrt 2, \sqrt{p_1}, \sqrt{p_2} )$, where $\po \equiv 1 \pmod8$ and  $  \pt\equiv 1 \pmod4$ are two prime numbers such that $(n_3, n_4)=(-1,-1)$. Note that a possible interchanging of $p_1$ and $p_2$ is taken into account.
 	
 	\begin{theorem}\label{MT4} Let $\po \equiv 1 \pmod8$ and  $  \pt\equiv 1 \pmod4$  be two prime numbers   such that   $(n_3, n_4)=(-1,-1)$. Put $\KK=\QQ(\sqrt 2, \sqrt{\po}, \sqrt{\pt} )$.
 		\begin{enumerate}[\rm $1)$]
 			\item   Assume that $(n_1,n_2,n_3, n_4)=(-1,-1,-1,-1)$. Then the unit group of $\KK$ takes one of the following forms:
 			
 			\begin{enumerate}[\rm $a)$]
 				\item  
 				$E_{\KK}=\langle -1,  \varepsilon_{2}, \varepsilon_{\po},   \varepsilon_{\pt}, \sqrt{\varepsilon_{2}\varepsilon_{\po}\varepsilon_{2\po}}, 
 				\sqrt{\varepsilon_{2}\varepsilon_{\pt} \varepsilon_{2\pt}}, \sqrt{\varepsilon_{2}\varepsilon_{\po\pt}\varepsilon_{2\po\pt}},\sqrt{\varepsilon_{\po}\varepsilon_{\pt}\varepsilon_{\po\pt}}  \rangle,$ 
 				
 				\item  
 				$E_{\KK}=\langle -1,  \varepsilon_{2},   \varepsilon_{\po},\varepsilon_{\pt}, \sqrt{\varepsilon_{2}\varepsilon_{\po}\varepsilon_{2\po}},\sqrt{\varepsilon_{2}\varepsilon_{\pt}\varepsilon_{2\pt}}, \sqrt{\varepsilon_{\po}\varepsilon_{\pt}\varepsilon_{\po\pt}},\\
 				\qquad \sqrt{\varepsilon_{2}^a\varepsilon_{\po}^b \varepsilon_{\pt}^c \varepsilon_{\po\pt}^d \sqrt{\varepsilon_{2}\varepsilon_{\po}\varepsilon_{2\po}}\sqrt{\varepsilon_{2}\varepsilon_{\pt}\varepsilon_{2\pt}}
 					\sqrt{\varepsilon_{2}\varepsilon_{\po\pt}\varepsilon_{2\po\pt} }}   \rangle,$
 				
 				for some  $a$, $b$, $c$ and $d$ in $\{0,1\}$ with $(b,c,d)\not=(1,1,1)$.
 				
 			\end{enumerate}
 			{\bf Furthermore},  if  the unit group of $\KK$ takes the form $b)$, then $\varepsilon_{2}\varepsilon_{\po\pt}\varepsilon_{2\po\pt}$ is  a square in $k_3$.

 			\item   Assume that $(n_1,n_2,n_3, n_4)=(-1,1,-1,-1)$. Then the unit group of $\KK$ is :
 			$$E_{\KK}=\langle-1,  \varepsilon_{2}, \varepsilon_{\po},\varepsilon_{\pt} ,\sqrt{\varepsilon_{2\pt}}, \sqrt{\varepsilon_{2}\varepsilon_{\po}\varepsilon_{2\po}},\sqrt{\varepsilon_2\varepsilon_{\po\pt}\varepsilon_{2\po\pt}}, \sqrt{\varepsilon_{\po}\varepsilon_{\pt}\varepsilon_{\po\pt}} \rangle.$$

 			\item   Assume that $(n_1,n_2,n_3, n_4)=(1,1,-1,-1)$. Then the unit group of $\KK$ is :
 			$$E_{\KK}=\langle-1,  \varepsilon_{2}, \varepsilon_{\po},\varepsilon_{\pt} ,\sqrt{\varepsilon_{2\po}},\sqrt{\varepsilon_{2\pt}}, \sqrt{\varepsilon_{2}\varepsilon_{\po\pt}\varepsilon_{2\po\pt}}, \sqrt{\varepsilon_{\po}\varepsilon_{\pt}\varepsilon_{\po\pt}} \rangle.$$

 		\end{enumerate}
 	\end{theorem}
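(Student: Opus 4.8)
The plan is to run Wada's algorithm (Section \ref{sec2prep}) in each of the three cases exactly as in the proofs of Theorems \ref{MT1A} and \ref{MT3}. First I would record the fundamental systems of units of $k_1$, $k_2$, $k_3$. Since $N(\varepsilon_{\po})=N(\varepsilon_{\pt})=-1$ by Lemma \ref{norms}, the hypotheses on $n_1,n_2$ fix a F.S.U.\ of $k_i$ ($i=1,2$) via Lemma \ref{fork1k2}: it is $\{\varepsilon_2,\varepsilon_{p_i},\sqrt{\varepsilon_2\varepsilon_{p_i}\varepsilon_{2p_i}}\}$ when $n_i=-1$ and $\{\varepsilon_2,\varepsilon_{p_i},\sqrt{\varepsilon_{2p_i}}\}$ when $n_i=1$. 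The standing hypothesis $(n_3,n_4)=(-1,-1)$ places $k_3$ in case (1) of Lemma \ref{lmunit}, so a F.S.U.\ of $k_3$ is $\{\varepsilon_2,\varepsilon_{\po\pt},\sqrt{\varepsilon_2\varepsilon_{\po\pt}\varepsilon_{2\po\pt}}\}$ or $\{\varepsilon_2,\varepsilon_{\po\pt},\varepsilon_{2\po\pt}\}$ according to whether $\varepsilon_2\varepsilon_{\po\pt}\varepsilon_{2\po\pt}$ is a square in $k_3$. A structural fact used throughout is that $n_3=n_4=-1$ forces both $\sqrt{\varepsilon_{\po}\varepsilon_{\pt}\varepsilon_{\po\pt}}$ and $\sqrt{\varepsilon_2\varepsilon_{\po}\varepsilon_{\pt}\varepsilon_{2\po\pt}}$ into $\KK$ by Lemma \ref{squaretest}, while $\sqrt{\varepsilon_2\varepsilon_{\po\pt}\varepsilon_{2\po\pt}}$ already lies in $\KK$ by Remark \ref{remarkzekh} (in $k_3$ precisely when $\varepsilon_2\varepsilon_{\po\pt}\varepsilon_{2\po\pt}$ is a square there, otherwise in $\KK\setminus k_3$).

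Next, for each case I would write $E_{k_1}E_{k_2}E_{k_3}$ explicitly, set $\chi^2$ equal to a generic monomial $\varepsilon_2^a\varepsilon_{\po}^b\varepsilon_{\pt}^c\cdots$ with all exponents in $\{0,1\}$, and apply the norm maps $N_{\KK/k_j}=1+\tau$. As before, each norm map yields a congruence modulo $2$ among the exponents and forces an exponent to vanish whenever the corresponding radical is not a square in the target field (for instance $\varepsilon_2$ is not a square in $k_2$ or $k_3$, while $\sqrt{\varepsilon_{2\po}}\notin k_5$ and $\sqrt{\varepsilon_{2\pt}}\notin k_6$). For cases (2) and (3) the extra radical $\sqrt{\varepsilon_{2p_i}}$ arising from $n_i=1$ kills the relevant exponents cleanly, the resulting congruence system has a single solution set, and the surviving radicals are exactly the three elements of $\KK$ guaranteed above; this reproduces the stated generating sets with no ambiguity.

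The delicate case is (1), with $(n_1,n_2,n_3,n_4)=(-1,-1,-1,-1)$, which parallels items (2) and (3) of Theorem \ref{MT1A}. I would split according to whether $\varepsilon_2\varepsilon_{\po\pt}\varepsilon_{2\po\pt}$ is a square in $k_3$. If it is \emph{not}, the F.S.U.\ of $k_3$ uses $\varepsilon_{2\po\pt}$ and the elimination leaves precisely the radicals of form (a). If it \emph{is} a square, then $\sqrt{\varepsilon_2\varepsilon_{\po\pt}\varepsilon_{2\po\pt}}$ enters the F.S.U.\ of $k_3$, and the norm maps $N_{\KK/k_4},N_{\KK/k_5},N_{\KK/k_6},N_{\KK/k_7}$ produce no further vanishing because the corresponding triple products are squares in the biquadratic subfields (re-established by class-number arguments in the spirit of Remark \ref{forq4} and Corollary \ref{sqrtcnd}, but using the Section \ref{section23} inputs); the only equation that cannot be eliminated a priori is the nested radical of form (b). The main obstacle sits exactly here: unlike in Theorem \ref{MT1A}, where $\po\equiv\pt\equiv 5\pmod 8$ let Lemma \ref{equalies} fix $h_2(\KK)$ and hence $q(\KK)$, under $\po\equiv 1\pmod 8$ the value of $h_2(2\po)$ may be $2$ or $4$ (Lemma \ref{class numbers of quadratic field}, (5)--(6)), so that in the formula of Lemma \ref{wada's f.} (here $n=3$, $v=9$) the class number $h_2(\KK)$, and with it the index $q(\KK)\in\{2^4,2^5\}$, is \emph{not} determined by the present hypotheses. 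This is why the conclusion must be stated as the alternative (a) or (b) rather than a single form.

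Finally, I would settle the \textbf{furthermore} clause by contraposition. The nested radical defining form (b) contains $\sqrt{\varepsilon_2\varepsilon_{\po\pt}\varepsilon_{2\po\pt}}$ as a factor, and this factor can only appear among the F.S.U.-building blocks of $\KK$ when $\varepsilon_2\varepsilon_{\po\pt}\varepsilon_{2\po\pt}$ is a square in $k_3$; were it not a square, the non-square branch analysed above would force form (a). Hence form (b) necessitates that $\varepsilon_2\varepsilon_{\po\pt}\varepsilon_{2\po\pt}$ be a square in $k_3$, which completes the argument.
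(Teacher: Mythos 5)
Your proposal is correct and follows essentially the same route as the paper: the paper's own proof of case $1)$ simply defers to the arguments given for items $2)$ and $3)$ of Theorem \ref{MT1A} (the non-square and square subcases for $\varepsilon_{2}\varepsilon_{\po\pt}\varepsilon_{2\po\pt}$ in $k_3$), and for cases $2)$ and $3)$ it carries out exactly the norm-map eliminations you describe, ending with $\chi^2=(\varepsilon_{\po}\varepsilon_{\pt}\varepsilon_{\po\pt})^b$ and an appeal to Lemma \ref{squaretest}; your diagnosis of why the alternative in case $1)$ cannot be resolved (no analogue of Lemma \ref{equalies} is available when $\po\equiv 1\pmod 8$) matches the paper's later remarks and numerical table.

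One caveat on your case $1)$: the parenthetical claim that the triple products $\varepsilon_{\po}\varepsilon_{2\pt}\varepsilon_{2\po\pt}$, $\varepsilon_{\pt}\varepsilon_{2\po}\varepsilon_{2\po\pt}$, $\varepsilon_{2\po}\varepsilon_{2\pt}\varepsilon_{\po\pt}$ are squares in $k_6$, $k_5$, $k_7$ cannot in fact be ``re-established'' here. Corollary \ref{sqrtcnd} assumes the hypotheses of Lemma \ref{equalies} ($\po\equiv\pt\equiv 5\pmod 8$ and $\genfrac(){}{0}{\po}{\pt}=-1$), and Remark \ref{forq4} likewise uses $h_2(\po\pt)=2$, i.e. $\genfrac(){}{0}{\po}{\pt}=-1$; none of this is assumed in Theorem \ref{MT4}, where only $(n_3,n_4)=(-1,-1)$ is imposed. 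Fortunately the claim is inessential: it is used only to assert that the norm maps produce ``no further vanishing,'' i.e. to keep the nested equation alive. If any of those products fails to be a square in the corresponding biquadratic field, the relevant norm map simply forces the exponent $e$ of the nested radical to vanish, the congruences then give $a=0$ and $b=c=d$, and the elimination lands in form $a)$ --- still within the stated alternative. So your conclusion (and your contraposition argument for the \textbf{furthermore} clause, which implicitly uses that form $b)$ strictly contains form $a)$) stands; you should just replace the square-ness claim by the observation that the two possible outcomes of these norm maps both lead to one of the two listed forms.
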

 	\begin{proof}$\;$
 		\begin{enumerate}[\rm $1)$]
 			\item  Assume that $(n_1,n_2,n_3, n_4)=(-1,-1,-1,-1)$. We have   $N(\varepsilon_{\po})=N(\varepsilon_{\pt})=n_1=n_2=n_3= n_4=-1$. Thus,    for $i\in\{1,2\}$,  
 			a F.S.U of $k_i$ is given by $\{\varepsilon_{2}, \varepsilon_{p_i},	\sqrt{\varepsilon_{2}\varepsilon_{p_i}\varepsilon_{2p_i}}\}$ (cf. Lemma \ref{fork1k2}). Furthermore, by Lemma 
 			\ref{lmunit},  a F.S.U of $k_3$ is : 
 			\begin{enumerate}[$\star$]
 				\item   $\{\varepsilon_{2}, \varepsilon_{\po\pt},	\sqrt{\varepsilon_{2}\varepsilon_{\po\pt}\varepsilon_{2\po\pt}}\}$, if $\varepsilon_{2}\varepsilon_{\po\pt}\varepsilon_{2\po\pt}$ is a square in $k_3$,
 				\item   $\{\varepsilon_{2}, \varepsilon_{\po\pt},\varepsilon_{2\po\pt}\}$ else.
 			\end{enumerate}
 			\begin{enumerate}[\rm $a)$]
 				\item If $\varepsilon_{2}\varepsilon_{\po\pt}\varepsilon_{2\po\pt}$ is \textbf{not} a square in $k_3$,   then  $$E_{k_1}E_{k_2}E_{k_3}=\langle-1,  \varepsilon_{2}, \varepsilon_{\po},\varepsilon_{\pt}, \varepsilon_{\po\pt} ,\varepsilon_{2\po\pt},  \sqrt{\varepsilon_{2}\varepsilon_{\po}\varepsilon_{2\po}},\sqrt{\varepsilon_{2}\varepsilon_{\pt}\varepsilon_{2\pt}} \rangle.$$
 				So we shall consider 
 				\begin{eqnarray}\label{chi1t4}
 					\chi^2=\varepsilon_{2}^a\varepsilon_{\po}^b \varepsilon_{\pt}^c\varepsilon_{\po\pt}^d \varepsilon_{2\po\pt}^e\sqrt{\varepsilon_{2}\varepsilon_{\po}\varepsilon_{2\po}}^f\sqrt{\varepsilon_{2}\varepsilon_{\pt}\varepsilon_{2\pt}}^g,	 
 				\end{eqnarray} 
 				where $a, b, c, d, e, f$ and $g$ are in $\{0, 1\}$. Notice that the same proof of the second item of Theorem \ref{MT1A} gives the result.

 				\item If $\varepsilon_{2}\varepsilon_{\po\pt}\varepsilon_{2\po\pt}$ is  a square in $k_3$,   then the proof is the same as in the third item of Theorem \ref{MT1A}.			\end{enumerate}

 			\item Assume that $(n_1,n_2,n_3, n_4)=(-1,1,-1,-1)$. By Lemma \ref{fork1k2},  FSUs of $k_1$ and  $k_2$ and   are given by 
 			$\{\varepsilon_{2}, \varepsilon_{\po},	\sqrt{\varepsilon_{2}\varepsilon_{\po}\varepsilon_{2\po}}\}$ and $\{\varepsilon_{2}, \varepsilon_{\pt},	\sqrt{\varepsilon_{2\pt}}\}$ respectively.
 			\begin{enumerate}[$\bullet$]
 				\item If $\varepsilon_2\varepsilon_{\po\pt}\varepsilon_{2\po\pt}$ is a square in $k_3$, then $\{\varepsilon_{2}, \varepsilon_{\po\pt},	\sqrt{\varepsilon_{2}\varepsilon_{\po\pt}\varepsilon_{2\po\pt}}\}$  is a FSU of $k_3$. Thus,   $$E_{k_1}E_{k_2}E_{k_3}=\langle-1,  \varepsilon_{2}, \varepsilon_{\po},\varepsilon_{\pt}, \varepsilon_{\po\pt} ,\sqrt{\varepsilon_{2\pt}}, \sqrt{\varepsilon_{2}\varepsilon_{\po}\varepsilon_{2\po}},\sqrt{\varepsilon_2\varepsilon_{\po\pt}\varepsilon_{2\po\pt}} \rangle.$$
 				So let $\chi\in \KK$ be such that
 				\begin{eqnarray}\label{chi19a}
 					\chi^2=\varepsilon_{2}^a\varepsilon_{\po}^b \varepsilon_{\pt}^c\varepsilon_{\po\pt}^d \sqrt{\varepsilon_{2\pt}}^e\sqrt{\varepsilon_{2}\varepsilon_{\po}\varepsilon_{2\po}}^f\sqrt{\varepsilon_{2}\varepsilon_{\po\pt}\varepsilon_{2\po\pt}}^g,	 
 				\end{eqnarray} 
 				where $a, b, c, d, e, f$ and $g$ are in $\{0, 1\}$.   
 				Applying  $N_{\KK/k_2}, N_{\KK/k_3}, N_{\KK/k_4}$ and $N_{\KK/k_6}$ respectively, we check that 
 				\begin{equation*}
 					a=f=g=e=0\;\; \text{and} \;\; b=c=d.
 				\end{equation*}
 				Thus, we eliminated all the forms of $\chi^2$ that except the following
 				\begin{eqnarray}
 					\chi^2=(\varepsilon_{\po} \varepsilon_{\pt}\varepsilon_{\po\pt})^b.	 
 				\end{eqnarray} 
 				Hence, by Lemma \ref{squaretest}, we have 
 			 	$$E_{\KK}=\langle-1,  \varepsilon_{2}, \varepsilon_{\po},\varepsilon_{\pt} ,\sqrt{\varepsilon_{2\pt}}, \sqrt{\varepsilon_{2}\varepsilon_{\po}\varepsilon_{2\po}},\sqrt{\varepsilon_2\varepsilon_{\po\pt}\varepsilon_{2\po\pt}}, \sqrt{\varepsilon_{\po}\varepsilon_{\pt}\varepsilon_{\po\pt}} \rangle.$$

 				\item If $\varepsilon_2\varepsilon_{\po\pt}\varepsilon_{2\po\pt}$ is \textbf{not} a square in $k_3$, then $\{\varepsilon_{2}, \varepsilon_{\po\pt},	\varepsilon_{2\po\pt}\}$, is a FSU of $k_3$. Thus,  $$E_{k_1}E_{k_2}E_{k_3}=\langle-1,  \varepsilon_{2}, \varepsilon_{\po},\varepsilon_{\pt}, \varepsilon_{\po\pt}, \varepsilon_{2\po\pt} ,\sqrt{\varepsilon_{2\pt}}, \sqrt{\varepsilon_{2}\varepsilon_{\po}\varepsilon_{2\po}} \rangle.$$
 				Let us take $\chi$ an element of $\KK$ which is the square root of an element of $E_{k_1}E_{k_2}E_{k_3}$. Therefore, we can assume
 				that 
 				\begin{eqnarray}\label{chi19}
 					\chi^2=\varepsilon_{2}^a\varepsilon_{\po}^b \varepsilon_{\pt}^c\varepsilon_{\po\pt}^d\varepsilon_{2\po\pt}^e \sqrt{\varepsilon_{2\pt}}^f\sqrt{\varepsilon_{2}\varepsilon_{\po}\varepsilon_{2\po}}^g,	 
 				\end{eqnarray} 
 				where $a, b, c, d, e, f$ and $g$ are in $\{0, 1\}$.  
 				Then using norm maps we get: 
 				\begin{eqnarray}
 					N_{\KK/k_2}(\chi^2)&=& \varepsilon_{2}^{2a}\cdot (-1)^b\cdot \varepsilon_{\pt}^{2c}\cdot (-1)^d\cdot(-1)^e\cdot\varepsilon_{2\pt}^f\cdot\varepsilon_{2}^{g}\cdot (-1)^{\alpha g}, \\
 						N_{\KK/k_3}(\chi^2)&=& \varepsilon_{2}^{2a}\cdot (-1)^b\cdot (-1)^c\cdot \varepsilon_{\po\pt}^{2d}\cdot\varepsilon_{2\po\pt}^{2e}\cdot(-1)^{uf}, \\
 				N_{\KK/k_4}(\chi^2)&=& (-1)^a\cdot \varepsilon_{\po}^{2b}\cdot \varepsilon_{\pt}^{2c}\cdot\varepsilon_{\po\pt}^{2d} \cdot(-1)^e\cdot  (-1)^{(u+1)f},  \\
 						N_{\KK/k_6}(\chi^2)&=& (-1)^a\cdot \varepsilon_{\po}^{2b}\cdot (-1)^c\cdot(-1)^d \cdot \varepsilon_{2\po\pt}^e \cdot(-1)^{uf}\varepsilon_{2\pt}^f,  \\
 					N_{\KK/k_1}(\chi^2)&=& \varepsilon_{2}^{2a}\cdot \varepsilon_{\po}^{2b}\cdot (-1)^c\cdot(-1)^d \cdot (-1)^e . 			
 				\end{eqnarray}
 				With $\alpha \in \{0,1\}$ and $u \in \{0,1\}$ as in Lemma \ref{calcul}. As $\varepsilon_{2}$ and $\varepsilon_{2\pt}$  are not a squares in $k_2$ and $k_6$ respectively then we conclude that $f=g=0$, and then we have the system :
 				\[
 				\left \{
 				\begin{array}{ccc}
 					b+d+ e & \equiv& 0 \pmod 2, \\
 					b+c& \equiv& 0 \pmod 2, \\
 					a+e & \equiv& 0 \pmod 2,  
 				 	\end{array}
 				\right.
 				\]
 		  	which provides the following possible solutions based on the above equalities:   $(a,b,c,d,e)\in \{(0,1,1,1,0),(1,1,1,0,1),(1,0,0,1,1)\}$  $\pmod 2$. Therefore, the only possible forms of $\chi$ are:
 				\begin{equation*}
 					\sqrt{\varepsilon_{\po}\varepsilon_{\pt}\varepsilon_{\po\pt}}, \sqrt{\varepsilon_{2}\varepsilon_{\po}\varepsilon_{\pt}\varepsilon_{2\po\pt}}, \text{ and } \sqrt{\varepsilon_{2}\varepsilon_{\po\pt}\varepsilon_{2\po\pt}}.
 				\end{equation*}
 				Noting  that we can build the second from the two others and $\varepsilon_{\po\pt}^{-1}$, we have
 				$$E_{\KK}=\langle-1,  \varepsilon_{2}, \varepsilon_{\po},\varepsilon_{\pt} ,\sqrt{\varepsilon_{2\pt}}, \sqrt{\varepsilon_{2}\varepsilon_{\po}\varepsilon_{2\po}},\sqrt{\varepsilon_2\varepsilon_{\po\pt}\varepsilon_{2\po\pt}}, \sqrt{\varepsilon_{\po}\varepsilon_{\pt}\varepsilon_{\po\pt}} \rangle.$$

 			\end{enumerate}
 				
 				\item Assume that $(n_1,n_2,n_3, n_4)=(1,1,-1,-1)$. By Lemma \ref{fork1k2}, a F.S.U of $k_i$ is given by $\{\varepsilon_{2}, \varepsilon_{p_i},	\sqrt{\varepsilon_{2p_i}}\}$ for $i=1,2$,  and for $k_3$ we have two cases : 
 				            \begin{enumerate}[$\bullet$]
 					\item If $\varepsilon_2\varepsilon_{\po\pt}\varepsilon_{2\po\pt}$ is a square in $k_3$, then $\{\varepsilon_{2}, \varepsilon_{\po\pt},	\sqrt{\varepsilon_{2}\varepsilon_{\po\pt}\varepsilon_{2\po\pt}}\}$, is a FSU of $k_3$. Thus,  $$E_{k_1}E_{k_2}E_{k_3}=\langle-1,  \varepsilon_{2}, \varepsilon_{\po},\varepsilon_{\pt}, \varepsilon_{\po\pt} ,\sqrt{\varepsilon_{2\po}}, \sqrt{\varepsilon_{2\pt}},\sqrt{\varepsilon_2\varepsilon_{\po\pt}\varepsilon_{2\po\pt}} \rangle.$$
 					So we consider $\chi\in\KK$ such that
 					\begin{eqnarray} 
 						\chi^2=\varepsilon_{2}^a\varepsilon_{\po}^b \varepsilon_{\pt}^c\varepsilon_{\po\pt}^d \sqrt{\varepsilon_{2\po}}^e\sqrt{\varepsilon_{2\pt}}^f\sqrt{\varepsilon_{2}\varepsilon_{\po\pt}\varepsilon_{2\po\pt}}^g,	 
 					\end{eqnarray} 
 					where $a, b, c, d, e, f$ and $g$ are in $\{0, 1\}$.
 				 	Applying  $N_{\KK/k_2}, N_{\KK/k_3}, N_{\KK/k_4}, N_{\KK/k_5}$ and $N_{\KK/k_6},$ one can easily check that  
 					\begin{equation*}
 						a=f=g=e=0\;\; \text{and} \;\; b=c=d
 					\end{equation*}

 Therefore, $$\chi^2=(\varepsilon_{\po} \varepsilon_{\pt}\varepsilon_{\po\pt})^b \,$$
 					Hence, 
 					$$E_{\KK}=\langle-1,  \varepsilon_{2}, \varepsilon_{\po},\varepsilon_{\pt} ,\sqrt{\varepsilon_{2\po}},\sqrt{\varepsilon_{2\pt}}, \sqrt{\varepsilon_{2}\varepsilon_{\po\pt}\varepsilon_{2\po\pt}}, \sqrt{\varepsilon_{\po}\varepsilon_{\pt}\varepsilon_{\po\pt}} \rangle.$$
 					\item If $\varepsilon_2\varepsilon_{\po\pt}\varepsilon_{2\po\pt}$ is not a square in $k_3$, then $\{\varepsilon_{2}, \varepsilon_{\po\pt},	\varepsilon_{2\po\pt}\}$, is a FSU of $k_3$. So  $$E_{k_1}E_{k_2}E_{k_3}=\langle-1,  \varepsilon_{2}, \varepsilon_{\po},\varepsilon_{\pt}, \varepsilon_{\po\pt},\varepsilon_{2\po\pt} ,\sqrt{\varepsilon_{2\po}},\sqrt{\varepsilon_{2\pt}}\rangle.$$
 					using the same calculations as above, we obtain the same result.
 				\end{enumerate}
 			\end{enumerate}
 			\end{proof}

 		
 	\section{\bf Last remarks and results}\label{lastsec}
 	
 	We close this paper with the following results and remarks. Let us start with the next corollaries that follow directly from  our main results.
 	\begin{corollary} Assume that $p_1\equiv 1\pmod 8$ and   $p_2\equiv 5\pmod 8$. Then 
 		$$ h_2(\KK)=	\frac{1}{2^{5-\delta}}      h_2(2\po) h_2(2\pt)h_2(\po\pt)  h_2(2\po\pt),$$
 		where $\delta \in \{0,1\}$. Furthermore, if  none of the below conditions holds   $($i.e. $\mathrm{C1}$, $\mathrm{C2}$ and $\mathrm{C3}$$)$,  then $\delta =0$.
 		\begin{enumerate}
 			\item[$\mathrm{C1}:$] $n_1=n_2=n_3=n_4$,
 			\item[$\mathrm{C2}:$] $n_1=n_2=-n_3= n_4=1$ and $x\pm1$ is a square in $\NN$, $x$ and  $y$ are two integers such that $\varepsilon_{2\po\pt}=x+y\sqrt{2\po\pt} $,
 			\item[$\mathrm{C3}:$] $-n_1=n_2=n_3= n_4=1$ and $x\pm1$ is {\bf not } a square in $\NN$, $x$ and  $y$ are two integers such that $\varepsilon_{2\po\pt}=x+y\sqrt{2\po\pt} $.
 		\end{enumerate}
 	\end{corollary}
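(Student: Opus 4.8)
The plan is to combine Wada's class number formula (Lemma \ref{wada's f.}) with the explicit unit groups obtained in Theorems \ref{MT3} and \ref{MT4}. Since $\KK$ is a real multiquadratic field of degree $2^3$, we have $n=3$ and $v=n(2^{n-1}-1)=9$, and its seven quadratic subfields are $\QQ(\sqrt 2)$, $\QQ(\sqrt{\po})$, $\QQ(\sqrt{\pt})$, $\QQ(\sqrt{2\po})$, $\QQ(\sqrt{2\pt})$, $\QQ(\sqrt{\po\pt})$ and $\QQ(\sqrt{2\po\pt})$. Thus Lemma \ref{wada's f.} gives
$$h_2(\KK)=\frac{1}{2^9}q(\KK)\,h_2(2)h_2(\po)h_2(\pt)h_2(2\po)h_2(2\pt)h_2(\po\pt)h_2(2\po\pt).$$
First I would simplify this using Lemma \ref{class numbers of quadratic field} (1), which gives $h_2(2)=h_2(\po)=h_2(\pt)=1$; the three trivial factors drop out and the formula reduces to
$$h_2(\KK)=\frac{1}{2^9}q(\KK)\,h_2(2\po)h_2(2\pt)h_2(\po\pt)h_2(2\po\pt).$$
Comparing with the claimed identity, the whole corollary becomes equivalent to showing that $q(\KK)=2^{4+\delta}$ with $\delta\in\{0,1\}$, and that $\delta=0$ whenever none of $\mathrm{C1}$, $\mathrm{C2}$, $\mathrm{C3}$ holds.

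The next step is to pin down $q(\KK)$. Since $\pt\equiv 5\pmod 8$, Lemma \ref{norms} (2) forces $n_2=N(\varepsilon_{2\pt})=-1$, so only the cases of Theorems \ref{MT3} and \ref{MT4} with $n_2=-1$ can occur, namely the sign patterns $(n_1,n_2,n_3,n_4)$ of the form $(\pm1,-1,\pm1,\pm1)$. For each such case the cited theorems supply an explicit fundamental system of units, always of the shape $\{-1,\varepsilon_2,\varepsilon_{\po},\varepsilon_{\pt},\,g_5,g_6,g_7,g_8\}$ where $g_5,\dots,g_8$ are square roots of elements of $\prod_i E_{k_i}$. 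I would compute $q(\KK)=\big(E_{\KK}:\langle-1\rangle\prod_i E_{k_i}\big)$ as the order of the elementary abelian $2$-group $E_{\KK}/\langle-1\rangle\prod_iE_{k_i}$, reading off its rank from the number of independent square-root generators. In every case where the four generators $g_5,\dots,g_8$ are ordinary square roots of distinct products they span a quotient of order $2^4$, whereas in the exceptional situation the generating set contains a nested radical $\sqrt{\varepsilon_2^{a}\varepsilon_{\po}^{b}\varepsilon_{\pt}^{c}\varepsilon_{\po\pt}^{d}\,\sqrt{\cdots}\sqrt{\cdots}\sqrt{\cdots}}$, contributing one extra factor and giving $2^5$.

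Concretely, scanning the admissible patterns shows that $q(\KK)=2^4$ (so $\delta=0$) in all of them except the case $(n_1,n_2,n_3,n_4)=(-1,-1,-1,-1)$ in which $\varepsilon_2\varepsilon_{\po\pt}\varepsilon_{2\po\pt}$ is a square in $k_3$ (see Theorem \ref{MT4}, item $1$, case $b$), where the nested generator appears and $q(\KK)=2^5$. This is precisely condition $\mathrm{C1}$ with all norms equal to $-1$. It then remains to observe that conditions $\mathrm{C2}$ and $\mathrm{C3}$ both require $n_2=1$ and are therefore vacuous under the hypothesis $\pt\equiv5\pmod8$; they are kept only to make the statement uniform with the case of two primes congruent to $1$ modulo $8$. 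Hence $\delta=1$ can occur only inside $\mathrm{C1}$, so if none of $\mathrm{C1}$, $\mathrm{C2}$, $\mathrm{C3}$ holds then $\delta=0$. Substituting $q(\KK)=2^{4+\delta}$ into the reduced Wada formula yields $h_2(\KK)=\frac{1}{2^{5-\delta}}h_2(2\po)h_2(2\pt)h_2(\po\pt)h_2(2\po\pt)$, as claimed.

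The main obstacle I anticipate is the index computation of the second paragraph: translating the information ``which square roots lie in $E_{\KK}$'' into the exact value of $q(\KK)$, and in particular making rigorous that an ordinary square root contributes a single factor of $2$ while a nested radical contributes one more. The cleanest way to make this airtight is to work in the $\mathbb{F}_2$-vector space $E_{\KK}/\langle-1\rangle\prod_iE_{k_i}$ and exhibit the classes of the square-root generators as a basis, exactly as was done to obtain $q(\KK)=2^5$ in the proof of Theorem \ref{MT1A}; the remaining steps are routine substitutions.
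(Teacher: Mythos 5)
Your reduction is the same as the paper's one-line deduction: by Lemma \ref{wada's f.} with $n=3$ (so $v=9$) and Lemma \ref{class numbers of quadratic field}(1) (so $h_2(2)=h_2(\po)=h_2(\pt)=1$), the corollary is equivalent to $q(\KK)=2^{4+\delta}$ with $\delta\in\{0,1\}$ and $\delta=0$ outside $\mathrm{C1}$, $\mathrm{C2}$, $\mathrm{C3}$. That part of your argument is fine, as is the bookkeeping principle that an ordinary square-root generator contributes a factor $2$ to $q(\KK)$ while a nested radical contributes $4$.

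The genuine gap is in your case scan, specifically the claim that $\mathrm{C2}$ and $\mathrm{C3}$ are vacuous and that $\delta=1$ can occur only inside $\mathrm{C1}$. Since $\pt\equiv 5\pmod 8$ forces $n_2=-1$, the admissible patterns are $(\pm1,-1,\pm1,\pm1)$; but the patterns with $n_1=+1$ are nowhere listed literally in Theorems \ref{MT3} and \ref{MT4} (every listed item with $n_1=1$ also has $n_2=1$). They are covered only through the convention, stated at the head of Sections \ref{section2} and \ref{section23}, that a possible interchanging of $p_1$ and $p_2$ is taken into account. In particular $(n_1,n_2,n_3,n_4)=(1,-1,1,1)$ is the interchange of $(-1,1,1,1)$ and falls under Theorem \ref{MT3}(7); when moreover $x\pm1$ is not a square in $\NN$, case (7)(b) applies, and there the theorem leaves \emph{two} possibilities, $q(\KK)=2^4$ or $q(\KK)=2^5$ (the nested generator occurring when $\gamma=0$), without deciding between them. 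Hence $\delta=1$ can occur outside $\mathrm{C1}$, contrary to your assertion, and the last step of your proof collapses. Condition $\mathrm{C3}$ is precisely this case, read (like the theorems themselves) up to interchanging $\po$ and $\pt$; it is not vacuous. Your observation is correct only for $\mathrm{C2}$, which requires $n_1=n_2=1$ and genuinely cannot occur here even after interchange. The repair is to include the interchanged items \ref{MT3}(2), (4), (7) and \ref{MT4}(2) in the scan: the exceptional cases are then exactly Theorem \ref{MT4}(1)(b) (condition $\mathrm{C1}$) and Theorem \ref{MT3}(7)(b) (condition $\mathrm{C3}$), all other admissible cases giving $q(\KK)=2^4$, which yields the corollary as stated.
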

 	\begin{proof}
 		This is a direct deduction from Lemmas \ref{class numbers of quadratic field} and \ref{wada's f.}, and Theorems      \ref{MT3}  and  \ref{MT4}.
 	\end{proof}

 		\begin{corollary} Assume that $p_1\equiv 5\pmod 8$ and   $p_2\equiv 5\pmod 8$. Put $ \LL= \mathbb{Q}( \sqrt{-1},\sqrt{2}, \sqrt{\po}, \sqrt{\pt})$. Then 
 		$$ h_2(\LL)=	\frac{1}{2^{5-\delta}}         h_2(\po\pt)  h_2(2\po\pt)h_2(-\po\pt)h_2(-2\po\pt),$$
 		where $\delta \in \{0,1\}$. More precisely,  $\delta=1$ if and only if $N(\varepsilon_{\po\pt})=-1$ and $\varepsilon_{2}\varepsilon_{\po\pt}\varepsilon_{2\po\pt}$ is a square in $k_3$.
 	\end{corollary}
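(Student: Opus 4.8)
The plan is to apply Kuroda's class number formula (Lemma \ref{wada's f.}) to the imaginary multiquadratic field $\LL$ of degree $2^4$, evaluate the fifteen quadratic $2$-class numbers that appear, and then reduce the unit index $q(\LL)$ to the already computed index $q(\KK)$. For $n=4$ with $\LL$ imaginary the exponent in Lemma \ref{wada's f.} is $v=(n-1)(2^{n-2}-1)+2^{n-1}-1=3\cdot3+7=16$, hence
$$h_2(\LL)=\frac{1}{2^{16}}\,q(\LL)\prod_{i=1}^{15}h_2(k_i),$$
where the $k_i$ are the fields $\QQ(\sqrt d)$ with $d$ ranging over $\{-1,2,-2,\po,\pt,-\po,-\pt,2\po,2\pt,-2\po,-2\pt,\po\pt,2\po\pt,-\po\pt,-2\po\pt\}$.

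First I would evaluate the fifteen factors. The fields $\QQ(\sqrt d)$ with $d\in\{-1,2,-2,\po,\pt\}$ have trivial $2$-class number ($\QQ(i),\QQ(\sqrt2),\QQ(\sqrt{-2})$ are principal, the other two by Lemma \ref{class numbers of quadratic field}), and $h_2(2\po)=h_2(2\pt)=2$ because $\po\equiv\pt\equiv5\pmod8$ (Lemma \ref{class numbers of quadratic field}). For the four imaginary fields with $d\in\{-\po,-\pt,-2\po,-2\pt\}$, Lemma \ref{ambiguous class number formula} applied over $\QQ$ (whose class number is odd) gives $r_2=1$, since each has three ramified primes and $-1$ is not a norm so that $e=1$; thus their $2$-class groups are cyclic, and the congruence $p\equiv5\pmod8$ forces them to have order exactly $2$ (the classical fact $h(-4p)\equiv h(-8p)\equiv2\pmod4$ for $p\equiv5\pmod8$). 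Consequently
$$\prod_{i=1}^{15}h_2(k_i)=2^{6}\,h_2(\po\pt)\,h_2(2\po\pt)\,h_2(-\po\pt)\,h_2(-2\po\pt),$$
and the task is reduced to showing $q(\LL)=2^{5+\delta}$ with the asserted value of $\delta$.

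The heart of the argument is the identity $q(\LL)=2\,q(\KK)$, which I would derive from the Hasse relation $E_{\LL}=\langle\zeta_8\rangle E_{\KK}$ of \eqref{frem} together with $\mu(\LL)=\langle\zeta_8\rangle$. The seven real quadratic subfields of $\LL$ coincide with those of $\KK$; set $U$ for the product of their unit groups, so that $q(\KK)=[E_{\KK}:U]$. The eight imaginary quadratic subfields contribute only roots of unity, the only nontrivial one being $i\in\QQ(i)$, whence $\prod_{i=1}^{15}E_{k_i}=\langle i\rangle\,U$. Using the intermediate group $\langle\zeta_8\rangle U$,
$$q(\LL)=\bigl[\langle\zeta_8\rangle E_{\KK}:\langle i\rangle U\bigr]=\bigl[\langle\zeta_8\rangle E_{\KK}:\langle\zeta_8\rangle U\bigr]\cdot\bigl[\langle\zeta_8\rangle U:\langle i\rangle U\bigr].$$
For the first factor, any element of $E_{\KK}$ lying in $\langle\zeta_8\rangle U$ is real, so its $\zeta_8$-part is $\pm1\in U$; hence $E_{\KK}\cap\langle\zeta_8\rangle U=U$ and the factor equals $[E_{\KK}:U]=q(\KK)$. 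For the second, $\zeta_8\notin\langle i\rangle U$, since a relation $\zeta_8=i^{a}v$ with $v\in U\subset\RR$ would force the real unit $v$ to be a root of unity, hence $\pm1$; so this factor is $2$, and $q(\LL)=2\,q(\KK)$.

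Finally I would substitute $q(\KK)$ from Theorem \ref{MT1A}: in the two cases $N(\varepsilon_{\po\pt})=1$, and $N(\varepsilon_{\po\pt})=-1$ with $\varepsilon_2\varepsilon_{\po\pt}\varepsilon_{2\po\pt}$ not a square in $k_3$, the fundamental system of $\KK$ carries exactly four independent square-root units, so $q(\KK)=2^4$; in the remaining case $N(\varepsilon_{\po\pt})=-1$ with $\varepsilon_2\varepsilon_{\po\pt}\varepsilon_{2\po\pt}$ a square in $k_3$, the extra nested radical gives $q(\KK)=2^5$. Taking $\delta=1$ precisely in this last case, $q(\LL)=2^{5+\delta}$, and
$$h_2(\LL)=\frac{2^{5+\delta}}{2^{16}}\cdot2^{6}\,h_2(\po\pt)h_2(2\po\pt)h_2(-\po\pt)h_2(-2\po\pt)=\frac{1}{2^{5-\delta}}\,h_2(\po\pt)h_2(2\po\pt)h_2(-\po\pt)h_2(-2\po\pt),$$
which is the claimed formula and characterisation of $\delta$. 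The main obstacle is the index bookkeeping in the third paragraph: one must separate the contribution of the torsion, using reality of $\KK$ to identify $E_{\KK}\cap\langle\zeta_8\rangle U$ and the order of $\zeta_8$ to distinguish $\langle i\rangle U$ from $\langle\zeta_8\rangle U$. A secondary point needing care is the justification that the four imaginary $2$-class numbers equal exactly $2$ rather than a higher power of $2$.
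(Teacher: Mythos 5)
Your proposal is correct and takes essentially the same approach as the paper: its proof of this corollary is a one-line citation of exactly the ingredients you assemble, namely Kuroda's formula (Lemma \ref{wada's f.}), the quadratic $2$-class numbers from Lemma \ref{class numbers of quadratic field} together with $h_2(-\po)=h_2(-\pt)=h_2(-2\po)=h_2(-2\pt)=2$, the relation \eqref{frem}, and Theorem \ref{MT1A}. The only difference is that you spell out what the paper leaves as a ``direct deduction'' -- the exponent $v=16$, the evaluation of the fifteen factors, and the index bookkeeping giving $q(\LL)=2\,q(\KK)$ -- and all of these details check out.
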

 		\begin{proof}
 		This is a direct deduction from the equality \eqref{frem}, Lemmas \ref{class numbers of quadratic field} and \ref{wada's f.}, the fact that $h_2(-p_1)=h_2(-p_2)=h_2(-2p_1)=h_2(-2p_2)=2$ (cf. \cite[Corollary 19.6]{connor88}) and  Theorem \ref{MT1A}.
 	\end{proof}

 	\begin{corollary} Assume that $p_1\equiv 1\pmod 8$ and   $p_2\equiv 5\pmod 8$.  Put $ \LL= \mathbb{Q}( \sqrt{-1},\sqrt{2}, \sqrt{\po}, \sqrt{\pt})$. Then 
 	$q(\LL)=2^{5+\delta}$
 		where $\delta \in \{0,1\}$. Furthermore, if  none of the below conditions holds   $($i.e. $\mathrm{C1}$, $\mathrm{C2}$ and $\mathrm{C3}$$)$,  then $\delta =0$.
 		\begin{enumerate}
 			\item[$\mathrm{C1}:$] $n_1=n_2=n_3=n_4$,
 			\item[$\mathrm{C2}:$] $n_1=n_2=-n_3= n_4=1$ and $x\pm1$ is a square in $\NN$, $x$ and  $y$ are two integers such that $\varepsilon_{2\po\pt}=x+y\sqrt{2\po\pt} $,
 		 \item[$\mathrm{C3}:$] $-n_1=n_2=n_3= n_4=1$ and $x\pm1$ is {\bf not } a square in $\NN$, $x$ and  $y$ are two integers such that $\varepsilon_{2\po\pt}=x+y\sqrt{2\po\pt} $.
 		\end{enumerate}
 	\end{corollary}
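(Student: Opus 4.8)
The plan is to avoid computing $h_2(\LL)$ directly and instead reduce $q(\LL)$ to $q(\KK)$ via the Hasse-index relation \eqref{frem}. First I would extract $q(\KK)$ from the first Corollary of this section (which is proved under the same hypothesis $\po\equiv 1\pmod 8$, $\pt\equiv 5\pmod 8$). Applying Kuroda's class number formula (Lemma \ref{wada's f.}) to the real octic field $\KK$, where $n=3$ forces $v=3(2^{2}-1)=9$, and using $h_2(2)=h_2(\po)=h_2(\pt)=1$ (Lemma \ref{class numbers of quadratic field}), one gets
$$h_2(\KK)=\frac{1}{2^{9}}\,q(\KK)\,h_2(2\po)h_2(2\pt)h_2(\po\pt)h_2(2\po\pt).$$
Comparing this with the expression $h_2(\KK)=\frac{1}{2^{5-\delta}}h_2(2\po)h_2(2\pt)h_2(\po\pt)h_2(2\po\pt)$ of the first Corollary yields $q(\KK)=2^{4+\delta}$, with the very same $\delta$ and hence the same sufficient condition $(\mathrm{C1},\mathrm{C2},\mathrm{C3})$ for $\delta=0$.

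Next I would organise the fifteen quadratic subfields of $\LL$. Seven of them are the real fields $\QQ(\sqrt d)$ with $d\in\{2,\po,\pt,2\po,2\pt,\po\pt,2\po\pt\}$, which are exactly the quadratic subfields of $\KK$; the remaining eight are imaginary. Writing $P:=\prod_{d}E_{\QQ(\sqrt d)}$ for the product over these seven real fields, we have $P\subseteq E_{\KK}$ and $q(\KK)=(E_{\KK}:P)$. Each imaginary quadratic subfield contributes only its roots of unity, and since neither $\po$ nor $\pt$ equals $3$, the only one furnishing a unit outside $\{\pm1\}$ is $\QQ(\sqrt{-1})$, which supplies $i$. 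Hence $\prod_{i=1}^{15}E_{L_i}=\langle i\rangle P$, and therefore $q(\LL)=(E_{\LL}:\langle i\rangle P)$.

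The heart of the argument is then a short index computation inside $E_{\LL}$. Because $\mu(\LL)=\langle\zeta_8\rangle$ while $\mu(\KK)=\{\pm1\}$, relation \eqref{frem} gives $(E_{\LL}:E_{\KK})=[\langle\zeta_8\rangle:\langle\zeta_8\rangle\cap E_{\KK}]=[\langle\zeta_8\rangle:\{\pm1\}]=4$; and since $P$ is real with torsion $\{\pm1\}$ and $i^2=-1\in P$ while $i\notin P$, we get $(\langle i\rangle P:P)=[\langle i\rangle:\{\pm1\}]=2$. All the groups $P$, $E_{\KK}$, $\langle i\rangle P$, $E_{\LL}$ have free rank $7$, so every index is finite and the chains $P\subseteq E_{\KK}\subseteq E_{\LL}$ and $P\subseteq\langle i\rangle P\subseteq E_{\LL}$ are legitimate. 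Multiplicativity of the index then gives
$$q(\LL)=(E_{\LL}:\langle i\rangle P)=\frac{(E_{\LL}:E_{\KK})(E_{\KK}:P)}{(\langle i\rangle P:P)}=\frac{4\,q(\KK)}{2}=2\,q(\KK)=2^{5+\delta},$$
and since $\delta$ is inherited verbatim from the $\KK$-computation, $\delta=0$ whenever none of $\mathrm{C1},\mathrm{C2},\mathrm{C3}$ holds. The one point I expect to need care is the torsion bookkeeping: one must verify that $\mu(\LL)=\langle\zeta_8\rangle$ (so that \eqref{frem} applies and $(E_{\LL}:E_{\KK})=4$) and that no imaginary subfield other than $\QQ(\sqrt{-1})$ enlarges the torsion of $\prod_i E_{L_i}$ beyond $\langle i\rangle$; once these are pinned down, the rest is a formal index manipulation.
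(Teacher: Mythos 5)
Your proposal is correct and takes essentially the same route as the paper: the paper's own one-line proof combines the relation \eqref{frem} with the value $q(\KK)=2^{4+\delta}$ furnished by Theorems \ref{MT3} and \ref{MT4}, which is exactly what your Kuroda-formula detour through the corollary on $h_2(\KK)$ recovers. Your explicit torsion bookkeeping showing $\prod_{i=1}^{15}E_{L_i}=\langle i\rangle P$ and $q(\LL)=\frac{(E_{\LL}:E_{\KK})\,q(\KK)}{(\langle i\rangle P:P)}=2\,q(\KK)$ merely spells out the index computation that the paper treats as a ``direct deduction.''
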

 	\begin{proof}
 		This is a direct deduction from the equality \eqref{frem} and Theorems   \ref{MT3}  and  \ref{MT4}.
 	\end{proof}

\begin{remark}
Notice that if $p_1\equiv 1\pmod 8$ and   $p_2\equiv 1\pmod 4$ are such that one of conditions  $\mathrm{C1}$, $\mathrm{C2}$ and $\mathrm{C3}$ mentioned above, is satisfied then 
Theorem        \ref{MT3} $(3) (b)$, $(7) (b)$, $(9)$ and  Theorem \ref{MT4} $(1)(b)$ give two possibilities for the fundamental system of units of $\KK$ $($resp. $\LL$$)$, that is $q(\KK)=2^4$ or $2^5$ (resp. $q(\LL)=2^5$ or $2^6$). It seems to be very difficult, in these cases, to decide    which is the exact fundamental system (resp. which is the exact value of $q(\KK)$  and so that of  $q(\LL)$). We give the following numerical examples, by means of PARI/GP calculator version  2.15.5 (64bit), Feb 11 2024, which illustrate these cases.

 \begin{table}[H]
	
	{  {\centering\small \centering\renewcommand{\arraystretch}{1.6}
			\setlength{\tabcolsep}{0.5cm}
			$$\begin{tabular}{|c|c|c|c|}
				\hline
				$p_1\ (p_1\pmod 8)$ & $p_2\ (p_2\pmod 8)$ & $(n_1, n_2,n_3,n_4)$&  $q(\KK) $\\
				\hline
				$ 89 \ (1\pmod 8 )$&$73  \ (1\pmod 8 )$& $(1,1,1,1)$ & $2^5$   \\
				 	\hline
				 $193 \ (1\pmod 8 )$	 &$ 97 \ (1\pmod 8 )$&$(1,1,1,1)$ & $2^5$ \\
				 	\hline
				 	$41 \ (1\pmod 8 ) $& $13 \ (5\pmod 8 )$  &$(-1,-1,-1,-1)$ & $2^4$ \\
				 	\hline
				 	$ 41 \ (1\pmod 8 )$&$29 \ (5\pmod 8 )$  &$(-1,-1,-1,-1)$ & $2^5$ \\
				 	\hline
				 	$ 457 \ (1\pmod 8 )$& $41 \ (1\pmod 8 )$ &$(-1,-1,-1,-1)$ & $2^4$ \\
				 	\hline
				$ 457 \ 	(1\pmod 8 )$& $113 \ (1\pmod 8 )$ &$(-1,-1,-1,-1)$ & $2^5$ \\
				 	\hline
				 $97 \ 	(1\pmod 8 )$&$ 17 \ (1\pmod 8 )$ &$(1,1,-1,1)$ & $2^5$ \\
				 	\hline
				 $281 \ 	(1\pmod 8 )$&17  \ $(1\pmod 8 ) $ &$(1,1,-1,1)$ & $2^4$ \\
				 	\hline
				 	 $41 \ 	(1\pmod 8 )$&73 \  $(1\pmod 8 ) $ &$(-1,1,1,1)$ & $2^5$ \\
				 	\hline
				 	 $41 \ 	(1\pmod 8 )$&337  \ $(1\pmod 8 ) $ &$(-1,1,1,1)$ & $2^4$ \\
				 	\hline
			\end{tabular} $$ }}
	
	\caption{Numerical examples illustrating the  cases  of $\mathrm{C1}$,  $\mathrm{C2}$ and $\mathrm{C3}$. }	 
\end{table}

 \end{remark}

 We close the paper with the following.

 	\begin{proposition}	Let $p_1\equiv 1\pmod 8$ and   $p_2\equiv 5\pmod 8$ be such that $\genfrac(){}{0}{p_1}{p_2}=-1$  and    $(n_1,n_2,n_3, n_4)=(-1,-1,-1,-1)$.
 		Then 
 		\begin{enumerate}[$a)$]
 			\item $q(k_3)=q(k_6)=2$,  that is $\sqrt{\varepsilon_{2}\varepsilon_{\po\pt}\varepsilon_{2\po\pt} }\in k_3$ and 
 			$\sqrt{  \varepsilon_{\po}\varepsilon_{2\pt}\varepsilon_{2\po\pt}}\in k_6$. 
 		  	\item $q(\KK)=2^{4}q(k_5) $ and so $q(\KK)=2^5$ or $q(\KK)=2^4$. More precisely,
 			$$q(\KK)=2^5 \text{ if and only if }    \sqrt{\varepsilon_{\pt}\varepsilon_{2\po}\varepsilon_{2\po\pt}}\in k_5.$$
 		\end{enumerate}
 	\end{proposition}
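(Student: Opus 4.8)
The plan is to read off all the quadratic $2$-class numbers from Lemma~\ref{class numbers of quadratic field}, feed them into Wada's formula (Lemma~\ref{wada's f.}), and then use the ambiguous class number formula (Lemma~\ref{ambiguous class number formula}) twice: once over a quadratic subfield of odd class number to get $q(k_3)$ and $q(k_6)$, and once for the unramified extension $\KK/k_5$ to get the relation in $b)$. First I would record: $\genfrac(){}{0}{\po}{\pt}=-1$ gives $h_2(\po\pt)=2$; $\pt\equiv 5\pmod 8$ gives $h_2(2\pt)=2$; since $\po\equiv 1\pmod 8$ and $n_1=N(\varepsilon_{2\po})=-1$, Lemma~\ref{norms}(5)--(6) force $\genfrac(){}{0}{2}{\po}_4=\genfrac(){}{0}{\po}{2}_4=-1$, whence $h_2(2\po)=4$ by Lemma~\ref{class numbers of quadratic field}(6); and two of $\bigl\{\genfrac(){}{0}{\po}{\pt},\genfrac(){}{0}{2}{\po},\genfrac(){}{0}{2}{\pt}\bigr\}$ equal $-1$, so $h_2(2\po\pt)=4$. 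Plugging these into Lemma~\ref{wada's f.} yields $h_2(k_3)=2q(k_3)$, $h_2(k_5)=4q(k_5)$, $h_2(k_6)=2q(k_6)$ and $q(\KK)=2^{3}h_2(\KK)$.

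For $a)$ I would prove $h_2(k_3)=h_2(k_6)=4$. Take $k_6=F(\sqrt{2\pt})$ with $F=\QQ(\sqrt{\po})$, whose class number is odd (Lemma~\ref{class numbers of quadratic field}(1)). As $\po\equiv 1\pmod 8$ the prime $2$ splits in $F$, and since $\genfrac(){}{0}{\pt}{\po}=\genfrac(){}{0}{\po}{\pt}=-1$ the prime $\pt$ is inert in $F$; the extension $k_6/F$ is real, so exactly the two primes above $2$ and the one above $\pt$ ramify, giving $t_{k_6/F}=3$ and $r_2(\cl(k_6))=2-e_{k_6/F}$ by Lemma~\ref{ambiguous class number formula}. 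The crux is to show $e_{k_6/F}=0$, i.e. that $-1$ and $\varepsilon_{\po}$ are norms from $k_6$; granting this, $r_2(\cl(k_6))=2$ forces $h_2(k_6)\ge 4$, which together with $h_2(k_6)=2q(k_6)\le 4$ (Lemma~\ref{biquadunutk6}(1)) yields $h_2(k_6)=4$ and $q(k_6)=2$. Running the identical argument over $F=\QQ(\sqrt 2)$ (in which $\po$ splits and $\pt$ is inert) gives $q(k_3)=2$, and the asserted memberships $\sqrt{\varepsilon_2\varepsilon_{\po\pt}\varepsilon_{2\po\pt}}\in k_3$ and $\sqrt{\varepsilon_{\po}\varepsilon_{2\pt}\varepsilon_{2\po\pt}}\in k_6$ are then exactly Lemma~\ref{lmunit}(1) and Lemma~\ref{biquadunutk6}(1).

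The vanishing of the $e$-indices reduces, via the Hasse norm theorem for these cyclic degree-$2$ extensions, to norm-residue computations at the three ramified primes. One checks $-1$ is everywhere a local norm (using $\genfrac(){}{0}{-1}{\po}=1$, and that $-1$ is a square in $\mathbb{F}_{\pt^2}$ because $4\mid \pt^2-1$). For $\varepsilon_{\po}$ one shows it is a local norm above $\pt$ by a Frobenius computation exploiting $N(\varepsilon_{\po})=-1$ and $\pt\equiv 1\pmod 4$, while its two symbols above $2$ coincide by the product formula and are governed by the image of $\varepsilon_{\po}$ in $\QQ_2$ modulo $8$; this is precisely the $2$-adic datum controlled by the biquadratic condition $\genfrac(){}{0}{2}{\po}_4=\genfrac(){}{0}{\po}{2}_4=-1$ equivalent to $n_1=-1$. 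I expect this $2$-adic bookkeeping to be the main obstacle of the whole argument.

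For $b)$ I would first translate the claim: combining $q(\KK)=2^{3}h_2(\KK)$ with $h_2(k_5)=4q(k_5)$ shows that $q(\KK)=2^{4}q(k_5)$ is equivalent to $h_2(\KK)=\tfrac12 h_2(k_5)$. To establish the latter I would apply the ambiguous class number formula in Chevalley's form to the unramified quadratic extension $\KK/k_5$ of Remark~\ref{unramified}: since neither a finite nor an infinite prime ramifies, the ramification product is trivial and
$$|\cl(\KK)^{\mathrm{Gal}(\KK/k_5)}|=\frac{h(k_5)}{2\,[E_{k_5}:E_{k_5}\cap N_{\KK/k_5}(\KK^{*})]}.$$
Evaluating the unit norm index $[E_{k_5}:E_{k_5}\cap N_{\KK/k_5}(\KK^{*})]$ and combining it with the a priori bound $h_2(\KK)=q(\KK)/2^{3}\le 4$ coming from Theorem~\ref{MT4}(1) pins down $h_2(\KK)=\tfrac12 h_2(k_5)$, hence $q(\KK)=2^{4}q(k_5)$. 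Finally $q(k_5)\in\{1,2\}$, with $q(k_5)=2$ if and only if $\sqrt{\varepsilon_{\pt}\varepsilon_{2\po}\varepsilon_{2\po\pt}}\in k_5$, is just Lemma~\ref{biquadunutk6}(1) read for $k_5=\QQ(\sqrt{\pt},\sqrt{2\po})$, and this yields the stated dichotomy $q(\KK)\in\{2^4,2^5\}$.
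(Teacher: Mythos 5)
Your route is genuinely different from the paper's. The paper works with $k=\QQ(\sqrt{2\po\pt})$, notes that $\cl(k)\simeq(2,2)$ and that $\KK=k^{(1)}$ with $k_3,k_5,k_6$ the three unramified quadratic extensions of $k$, and then imports the structure theory of $2$-class field towers over $(2,2)$-fields (\cite{Ki76} and \cite[Figure 2]{aczboh}) to get $q(k_3)=q(k_6)=2$, the isomorphisms $\cl(k_3)\simeq\cl(k_6)\simeq(2,2)$, the cyclicity of $\cl_2(k_5)$, and finally the exact relation $h_2(k_5)=2h_2(\KK)$. Your part $a)$ avoids the tower theory and is viable: $r_2(\cl(k_6))=2-e_{k_6/F}$ together with $h_2(k_6)=2q(k_6)\le 4$ does force $q(k_6)=2$ once $e_{k_6/F}=0$ is proved, and the local-norm computations you defer (and flag as the main obstacle) are precisely the ones the paper carries out when it computes the $2$-ranks of $\cl(k_3)$ and $\cl(k_6)$; so $a)$ is a more elementary variant, though its crux is left unverified. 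One bookkeeping slip: $n_1=-1$ with $\po\equiv1\pmod 8$ only forces $\genfrac(){}{0}{2}{\po}_4=\genfrac(){}{0}{\po}{2}_4$, not that both equal $-1$; both can equal $+1$ (e.g.\ $\po=113$, $\pt=5$ satisfies all hypotheses) and then $h_2(2\po)=8$. So you may only assert $4\mid h_2(2\po)$, as the paper does, and your identities $h_2(k_5)=4q(k_5)$, $q(\KK)=2^3h_2(\KK)$, $h_2(\KK)\le 4$ must be rewritten with $h_2(2\po)$ in place of $4$ (the translation of $b)$ into $h_2(\KK)=\tfrac12 h_2(k_5)$ survives this correction).

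The genuine gap is in part $b)$. For the everywhere-unramified quadratic extension $\KK/k_5$ the unit index in Chevalley's formula is automatically trivial: units are local norms at every finite place of an unramified extension, the infinite places split, so the Hasse norm theorem gives $E_{k_5}\subseteq N_{\KK/k_5}(\KK^{*})$. Hence the formula yields $|\cl_2(\KK)^{\mathrm{Gal}(\KK/k_5)}|=\tfrac12h_2(k_5)$, and since the ambiguous classes form a subgroup of $\cl_2(\KK)$ this is only a \emph{lower} bound $h_2(\KK)\ge\tfrac12h_2(k_5)$. Combined with the upper bound $h_2(\KK)\le h_2(2\po)$ from Theorem \ref{MT4}(1), this settles one implication: $q(k_5)=2$ forces $h_2(\KK)=h_2(2\po)$, i.e.\ $q(\KK)=2^5$. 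But it cannot exclude the configuration $q(k_5)=1$ together with $q(\KK)=2^5$, in which $h_2(\KK)=h_2(k_5)=h_2(2\po)$ and the norm map $\cl_2(\KK)\to\cl_2(k_5)$ simply has a kernel of order $2$; nothing in the ambiguous class number formula forbids this. Closing this case is exactly where the paper needs its extra input: it proves $\cl(k_3)\simeq\cl(k_6)\simeq(2,2)$ by the rank computations, deduces from the $(2,2)$-tower theory that $\cl_2(k_5)$ is cyclic, and then gets the equality $h_2(k_5)=2h_2(\KK)$ (cyclicity makes the $2$-tower of $k_5$ terminate at $k_5^{(1)}$, so $\cl_2(\KK)\simeq\mathrm{Gal}(k_5^{(1)}/\KK)$). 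Without an argument of this kind — cyclicity of $\cl_2(k_5)$, or injectivity of the norm on $2$-class groups — the ``only if'' half of $b)$, and with it the identity $q(\KK)=2^{4}q(k_5)$, remains unproved.
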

 			
 	\begin{proof} To proof this result, we need some properties of the Hilbert $2$-class field tower of a number field $k$ such that $\mathbf{C}l(k)\simeq (2,2)$. For this reason, we refer the reader to \cite{aczboh,Ki76}.
 		  Notice that $\KK$ is a quartic unramified extension of  $k=\QQ(\sqrt{2p_1p_2})$.
 		Under the conditions of the first item,  we have $\mathbf{C}l(k)\simeq (2,2)$ (cf. \cite{kaplan76}). Therefore, $k^{(1)}=\KK$.
 		By Lemma \ref{class numbers of quadratic field}, we have  $ h_2(2p_2 )=2$,  $h_2(p_1p_2)=2$  and $h_2(2p_1 ) $ is divisible by $4$.  Thus,   
 		\begin{eqnarray*}
 			h_2(k_3)= 2q(k_3), \;  \quad h_2(k_6)=2q(k_6)  \; \text{ and } \; h_2(k_5)=q(k_5)  h_2(2\po ).
 		\end{eqnarray*}
 		Notice that $k_3$, $k_5$ and $k_6$ are the three unramified quadratic extensions of $k$ within  $k^{(1)}$. 
 		If $q(k_3)=1$ or $q(k_6)=1$, then according to \cite[Figure 2]{aczboh}, we have $ h_2(k_5)=q(k_5)  h_2(2\po )=2$, which contradicts the fact that 
 		$h_2(2p_1 ) $ is divisible by $4$. Therefore,  $q(k_3) =q(k_6)=2$ and so $\sqrt{\varepsilon_{2}\varepsilon_{\po\pt}\varepsilon_{2\po\pt} }\in k_3$ and 
 		$\sqrt{  \varepsilon_{\po}\varepsilon_{2\pt}\varepsilon_{2\po\pt}}\in k_6$. It follows that $h_2(k_3)=   h_2(k_6)=4$.
 		Notice that  there are exactly three  primes of $\QQ(\sqrt{2})$ that ramify in $k_3$. So $2$-rank$(\mathbf{C}l(k))=3-1-e$, where $e$ is such that 
 		$2^e=|E_{\QQ(\sqrt{2})}/(E_{\QQ(\sqrt{2})}\cap N_{k_3/\QQ(\sqrt{2})}(k_3))|$ (cf. Lemma \ref{ambiguous class number formula}). If $\mathfrak p_{i}$ is a prime ideal of $\QQ(\sqrt{2})$ above $p_i$, then 
 		$\left(\frac{\varepsilon_2,\,p_1p_2}{ \mathfrak p_1}\right)= \left(\frac{\varepsilon_2,\,p_1}{ \mathfrak p_1}\right)= \left(\frac{2}{  p_1}\right)_4\left(\frac{  p_1}{  2}\right)_4=1$ (the last equality follows from Lemma \ref{norms} and the assumption that $n_1=-1$)	and 
 		$\left(\frac{\varepsilon_2,\,p_1p_2}{ \mathfrak p_2}\right)=\left(\frac{\varepsilon_2,\, p_2}{ \mathfrak p_2}\right)= \left(\frac{-1,\, p_2}{ p_2}\right)=1 $. Thus $e=0$ and so 
 		$2$-rank$(\mathbf{C}l(k_3))=2$. We similarly use the extension $k_6/\QQ(\sqrt{p_1})$ to check that $2$-rank$(\mathbf{C}l(k_6))=2$. Hence $\mathbf{C}l(k_3)\simeq \mathbf{C}l(k_6)\simeq (2,2)$.  
 		Subsequently the $2$-class group of $k_5$ is cyclic and we have $h_2(k_5)=	2h_2(\KK)$. Since $	h_2(\KK)= \frac{1}{2^{5}}q(\KK)   h_2(2\po)$, this implies that 
 		$$q(\KK)=2^{4}q(k_5).$$
 		which completes the proof.


 	\end{proof}
 	The next corollary follows from the above proof.
\begin{corollary}
Keep the same hypothesis of the above proposition, then $\mathbf{C}l(k_3)\simeq \mathbf{C}l(k_6)\simeq (2,2)$,  $\mathbf{C}l(k_5)$ and $\mathbf{C}l(\KK)$ are cyclic of order $q(k_5)  h_2(2\po )$ and $2q(k_5)  h_2(2\po )$.
\end{corollary}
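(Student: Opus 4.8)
The plan is to assemble all four assertions from the computations already performed in the proof of the preceding proposition, together with the structure theory of the $2$-class field tower of a field whose class group is of type $(2,2)$ (cf. \cite{aczboh,Ki76}). For the first assertion, recall that there we obtained $q(k_3)=q(k_6)=2$, hence $h_2(k_3)=h_2(k_6)=4$, and that the ambiguous class number formula (Lemma \ref{ambiguous class number formula}) applied to $k_3/\QQ(\sqrt 2)$ and to $k_6/\QQ(\sqrt{\po})$ gave $r_2(\mathbf{C}l(k_3))=r_2(\mathbf{C}l(k_6))=2$. Since an abelian $2$-group of order $4$ and $2$-rank $2$ is elementary abelian, this yields $\mathbf{C}l(k_3)\simeq\mathbf{C}l(k_6)\simeq(2,2)$ at once.

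Next I would treat $\mathbf{C}l(k_5)$. Wada's formula (Lemma \ref{wada's f.}) for the biquadratic field $k_5$, combined with Lemma \ref{class numbers of quadratic field}, gives $h_2(k_5)=q(k_5)\,h_2(2\po)$, as in the proposition. For the structure, one uses that $k_3$, $k_5$ and $k_6$ are precisely the three unramified quadratic extensions of $k=\QQ(\sqrt{2\po\pt})$ lying in $\KK=k^{(1)}$; since two of them, namely $k_3$ and $k_6$, carry a class group of type $(2,2)$, the classification of the $2$-class field tower of a field with class group $(2,2)$ (cf. \cite[Figure 2]{aczboh}) forces the third, $\mathbf{C}l(k_5)$, to be cyclic. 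This is the second assertion: $\mathbf{C}l(k_5)$ is cyclic of order $q(k_5)\,h_2(2\po)$.

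Then I would deal with $\mathbf{C}l(\KK)$. Because $\KK/k_5$ is an unramified quadratic extension and $\mathbf{C}l(k_5)$ is cyclic, $\KK$ is the unique unramified quadratic layer of the cyclic $2$-class field tower of $k_5$, and the same analysis of \cite{aczboh,Ki76} propagates cyclicity from $\mathbf{C}l(k_5)$ to $\mathbf{C}l(\KK)$. Its order is then fixed by the class-number identity relating $h_2(\KK)$ and $h_2(k_5)$ recorded in the proof of the proposition, together with $h_2(k_5)=q(k_5)\,h_2(2\po)$, which produces the asserted value.

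The main obstacle is the structural statement for $\KK$: computing the bare order $h_2(\KK)$ is immediate from Wada's formula (Lemma \ref{wada's f.}) and the already-determined value of $q(\KK)$, but showing that $\mathbf{C}l(\KK)$ is \emph{cyclic} rather than merely of the correct order is the delicate point. It is exactly here that the detailed interdependence, supplied by \cite{aczboh,Ki76}, between the class-group types of $k_3$, $k_5$, $k_6$ and that of $\KK=k^{(1)}$ must be invoked, precisely as in the proof of the proposition.
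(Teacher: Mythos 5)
Your overall assembly is the paper's own route: the paper disposes of this corollary with the single line that it ``follows from the above proof,'' and your first two steps reproduce exactly what the proposition's proof establishes --- $h_2(k_3)=h_2(k_6)=4$ from $q(k_3)=q(k_6)=2$, $2$-rank $2$ from Lemma~\ref{ambiguous class number formula} applied to $k_3/\QQ(\sqrt 2)$ and $k_6/\QQ(\sqrt{\po})$, hence $\mathbf{C}l(k_3)\simeq\mathbf{C}l(k_6)\simeq(2,2)$, and then cyclicity of $\mathbf{C}l(k_5)$ from the taxonomy of the $2$-class field tower of $k=\QQ(\sqrt{2\po\pt})$ with $\mathbf{C}l(k)\simeq(2,2)$, via \cite{aczboh,Ki76}. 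Up to there your proposal is correct and is the same argument.

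The $\KK$ step, however, contains two genuine defects. First, cyclicity does not ``propagate'' along an unramified quadratic extension as a general principle; you must either invoke the group-theoretic fact underlying the cited taxonomy --- $\mathbf{C}l_2(\KK)=\mathbf{C}l_2(k^{(1)})\simeq\mathrm{Gal}(k^{(2)}/k^{(1)})=G'$ with $G=\mathrm{Gal}(k^{(2)}/k)$, and $G'$ is cyclic because $G/G'\simeq(2,2)$ forces $G$ to be dihedral, semidihedral or generalized quaternion (Kisilevsky) --- or make your propagation precise: since $\mathbf{C}l_2(k_5)$ is cyclic, the Galois group of the maximal unramified pro-$2$-extension of $k_5$ has cyclic abelianization, hence is itself cyclic, so $k_5^{(2)}=k_5^{(1)}$; consequently $\KK^{(1)}=k_5^{(1)}$ and $\mathbf{C}l_2(\KK)\simeq\mathrm{Gal}(k_5^{(1)}/\KK)$ is the index-$2$ subgroup of the cyclic group $\mathbf{C}l_2(k_5)$. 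Second, and more seriously, that computation yields $|\mathbf{C}l_2(\KK)|=\tfrac12 h_2(k_5)=\tfrac12\, q(k_5)\,h_2(2\po)$, which agrees with the identity $h_2(k_5)=2h_2(\KK)$ recorded in the proposition's proof and with $h_2(\KK)=2^{-5}q(\KK)h_2(2\po)$, $q(\KK)=2^4q(k_5)$ --- but it does \emph{not} agree with the order $2q(k_5)h_2(2\po)$ printed in the corollary, which is off by a factor of $4$ and is evidently a misprint. Your closing claim that the class-number identity ``produces the asserted value'' is therefore arithmetically false as written: carrying out the substitution you defer would have exposed the discrepancy rather than confirmed the statement, and a correct write-up must either derive $\tfrac12 q(k_5)h_2(2\po)$ and note the misprint, or fail.
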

 	
 	The next corollary follows from the above proposition and $\eqref{frem}$.	 
\begin{corollary}
	Keep the same hypothesis of the above proposition and let $\LL= \mathbb{Q}( \sqrt{-1},\sqrt{2}, \sqrt{\po}, \sqrt{\pt})$. Then $ q(\LL)=2^5$ or $2^6$. More precisely, 
	$$q(\LL)=2^6 \text{ if and only if }    \sqrt{\varepsilon_{\pt}\varepsilon_{2\po}\varepsilon_{2\po\pt}}\in k_5.$$
\end{corollary}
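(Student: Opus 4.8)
The plan is to reduce the computation of $q(\LL)$ to the preceding proposition by establishing the single clean relation $q(\LL)=2\,q(\KK)$, the bridge being the identity \eqref{frem}, namely $E_{\LL}=\langle\zeta_8\rangle E_{\KK}$. First I would enumerate the $15$ quadratic subfields $L_i$ of $\LL=\QQ(\sqrt{-1},\sqrt2,\sqrt{\po},\sqrt{\pt})$: the seven real ones $\QQ(\sqrt2),\QQ(\sqrt{\po}),\QQ(\sqrt{\pt}),\QQ(\sqrt{2\po}),\QQ(\sqrt{2\pt}),\QQ(\sqrt{\po\pt}),\QQ(\sqrt{2\po\pt})$, which are precisely the quadratic subfields of $\KK$, together with the eight imaginary ones $\QQ(\sqrt{-1}),\QQ(\sqrt{-2}),\QQ(\sqrt{-\po}),\QQ(\sqrt{-\pt}),\QQ(\sqrt{-2\po}),\QQ(\sqrt{-2\pt}),\QQ(\sqrt{-\po\pt}),\QQ(\sqrt{-2\po\pt})$.

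Next I would pin down $\prod_i E_{L_i}$. Writing $P=\langle\varepsilon_2,\varepsilon_{\po},\varepsilon_{\pt},\varepsilon_{2\po},\varepsilon_{2\pt},\varepsilon_{\po\pt},\varepsilon_{2\po\pt}\rangle$ for the free group generated by the seven quadratic fundamental units, the real subfields contribute $\langle-1\rangle P$. Because $\po\equiv\pt\equiv1\pmod4$, none of the eight imaginary quadratic subfields equals $\QQ(\sqrt{-3})$, so each of them apart from $\QQ(\sqrt{-1})$ has unit group $\{\pm1\}$, while $E_{\QQ(\sqrt{-1})}=\langle i\rangle$. Hence, using $-1=i^2$,
$$\prod_i E_{L_i}=\langle i\rangle\cdot\langle-1\rangle\cdot P=\langle i\rangle P=\langle i\rangle\times P.$$

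The heart of the argument is the index computation, where I would feed in \eqref{frem}. Writing $E_{\KK}=\langle-1\rangle\Gamma=\langle-1\rangle\times\Gamma$ with $\Gamma$ the rank $7$ group of fundamental units of $\KK$, the relation \eqref{frem} yields $E_{\LL}=\langle\zeta_8\rangle\Gamma=\langle\zeta_8\rangle\times\Gamma$, and $\mu(\LL)=\langle\zeta_8\rangle$ has order $8$ (no primitive $16$th or $3$rd root of unity can lie in the elementary abelian $2$-extension $\LL/\QQ$, since the corresponding cyclotomic fields are not multiquadratic). Every element of $P\subseteq E_{\KK}$ has torsion part in $\langle-1\rangle\subseteq\langle i\rangle$, so $\langle i\rangle P=\langle i\rangle\times\bar P$, where $\bar P$ denotes the image of $P$ in $\Gamma$. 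Splitting the index along torsion and free parts then gives
$$q(\LL)=\bigl(E_{\LL}:\textstyle\prod_i E_{L_i}\bigr)=(\langle\zeta_8\rangle:\langle i\rangle)\,(\Gamma:\bar P)=2\cdot\bigl(E_{\KK}:\langle-1\rangle P\bigr)=2\,q(\KK).$$

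Finally I would substitute the proposition. It gives $q(\KK)=2^{4}q(k_5)\in\{2^4,2^5\}$, with $q(\KK)=2^5$ exactly when $\sqrt{\varepsilon_{\pt}\varepsilon_{2\po}\varepsilon_{2\po\pt}}\in k_5$; combined with $q(\LL)=2\,q(\KK)$ this forces $q(\LL)\in\{2^5,2^6\}$ and $q(\LL)=2^6$ if and only if $\sqrt{\varepsilon_{\pt}\varepsilon_{2\po}\varepsilon_{2\po\pt}}\in k_5$, which is the assertion. The one point requiring genuine care is the third step: one must verify that the quadratic subfields contribute torsion exactly $\langle i\rangle$ (order $4$) whereas $\mu(\LL)=\langle\zeta_8\rangle$ (order $8$), since it is precisely this discrepancy, and nothing else, that produces the factor $2$ relating $q(\LL)$ to $q(\KK)$.
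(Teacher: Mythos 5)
Your proposal is correct and follows exactly the route the paper intends: the paper's proof is the one-line remark that the corollary follows from the preceding proposition together with the identity \eqref{frem}, and your argument simply makes explicit the bookkeeping behind that remark, namely that the eight imaginary quadratic subfields contribute only $\langle i\rangle$ to $\prod_i E_{L_i}$ while $\mu(\LL)=\langle\zeta_8\rangle$, so that \eqref{frem} yields $q(\LL)=2\,q(\KK)$, after which the proposition $q(\KK)=2^{4}q(k_5)$ gives the stated dichotomy and criterion. Your torsion/free splitting of the index is sound, so the proof is complete.
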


 \begin{remark}
 	Notice that under the conditions $\mathrm{C}1$, $\mathrm{C}2$ and $\mathrm{C}3$, we have $q(\KK)=2^4$ or $q(\KK)=2^5$, but we could not decide when we have each case. As shown previously, one can use class field
 	theory to deduce more precise results.

 	    The proof of the following corollary highlights another idea that can help to  get more precision concerning the result of 
 	 Theorem 3.2- 7)-b).
 	 
 \end{remark}

 		\begin{corollary}
 		 Let $x$ and $y$ (resp. $a'$ and $b'$) be two integers such that  $\varepsilon_{2\po\pt}=x+y\sqrt{2\po\pt}$ (resp. $\varepsilon_{\po\pt}=a'+b'\sqrt{\po\pt}$). 
 		 Assume that $(n_1,n_2,n_3,n_4)=(-1,1,1,1)$, $2p_1(x\pm1)$ is a square  in $\NN$ and  $2\po(a'\pm1)$ is {\bf  not } a square in $\NN$. Then $q(\KK)=2^4$ and $q(\LL)=2^5$.
 		\end{corollary}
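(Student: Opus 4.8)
The plan is to reduce to the one undecided case of Theorem \ref{MT3}, namely item $7)(b)$, and then to resolve its ambiguity by a square-detection argument carried out in the biquadratic subfield $k_2=\QQ(\sqrt 2,\sqrt{\pt})$. First I would record the consequences of the hypotheses. Since $\pt\equiv1\pmod4$ and $n_2=N(\varepsilon_{2\pt})=1$, Lemma \ref{norms}(2) forbids $\pt\equiv5\pmod8$, so $\pt\equiv1\pmod8$; likewise $n_3=N(\varepsilon_{\po\pt})=1$ forces $\left(\frac{\po}{\pt}\right)=1$ by Lemma \ref{norms}(3). Because $2p_1(x\pm1)$ is a square in $\NN$, the number $x\pm1$ cannot itself be a square (else $2p_1$ would be a square in $\QQ$), so we are genuinely in item $7)(b)$; moreover Remark \ref{twosquare}(3) gives $\sqrt{\varepsilon_{2\po\pt}}=y_1\sqrt{\po}+\tfrac{y_2}{2}\sqrt{2\pt}\in k_6$, and the hypothesis that $2p_1(a'\pm1)$ is not a square together with Remark \ref{twosquare}(1) places $\sqrt{\varepsilon_{\po\pt}}$ in $k_3$ or $k_7$ (not in $k_4$). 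These are exactly the radicals occurring in the generating set of $7)(b)$, so the generating system of $E_{\KK}$ is fixed.

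Recall that in $7)(b)$ one has $q(\KK)=2^5$ precisely when $\eta:=\varepsilon_2^{\,a}\varepsilon_\pt^{\,a+1}\sqrt{\varepsilon_{2\pt}}$ is a square in $\KK$ (with $a\equiv u+1\pmod2$, $u$ as in Lemma \ref{calcul} for $p=\pt$), and $q(\KK)=2^4$ otherwise. Since $\pt\equiv1\pmod8$ and $N(\varepsilon_{2\pt})=1$, Lemma \ref{fork1k2}(2) shows $\sqrt{\varepsilon_{2\pt}}\in k_2$, so $\eta\in k_2$ and $\{\varepsilon_2,\varepsilon_\pt,\sqrt{\varepsilon_{2\pt}}\}$ is a fundamental system of units of $k_2$. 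The key step is then to prove that $\eta$ is not a square in $\KK$. For this I would use $\KK=k_2(\sqrt{\po})$ (note $\sqrt{\po}\notin k_2$) together with the elementary fact that an element $\beta\in k_2$ is a square in $\KK$ if and only if $\beta$ or $\po\beta$ is a square in $k_2$.

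Both alternatives fail. In the free part of $E_{k_2}$ the element $\eta$ has odd coordinate $1$ along $\sqrt{\varepsilon_{2\pt}}$, hence $\eta$ is not a square in $k_2$. For the other alternative, $\eta$ is a unit, so $(\po\eta)=(\po)$ as ideals of $k_2$; because $\po\equiv1\pmod8$ (so $\left(\frac{2}{\po}\right)=1$) and $\left(\frac{\po}{\pt}\right)=1$, the prime $\po$ splits completely in $k_2$, giving $(\po)=\mathfrak P_1\mathfrak P_2\mathfrak P_3\mathfrak P_4$ with distinct primes, an ideal that is squarefree and therefore not a square. Hence $\po\eta$ is not a square in $k_2$ either, so $\eta\notin(\KK^\ast)^2$, forcing $\gamma=1$ and $q(\KK)=2^4$. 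Finally, by \eqref{frem} (that is, $E_{\LL}=\langle\zeta_8\rangle E_{\KK}$) and the same passage used in the preceding corollaries, $q(\LL)=2\,q(\KK)=2^5$.

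The main obstacle is exactly the square-detection step, i.e.\ establishing $\eta\notin(\KK^\ast)^2$. Its delicate part is to transport the question of squareness from the octic field $\KK$ down to the biquadratic field $k_2$, and then to convert the remaining test into the purely ideal-theoretic observation that the completely split prime $\po$ has squarefree principal ideal in $k_2$; verifying carefully that $\eta$ is a unit (so that $(\po\eta)=(\po)$ honestly) is what makes this work. The two congruence/square hypotheses on $x$ and on $a'$ enter only to pin down, via Remark \ref{twosquare}, the exact shape of the generating set and to confirm that we lie in item $7)(b)$, after which the ideal argument decides the value of $q(\KK)$ unconditionally.
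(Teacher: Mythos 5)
Your square-detection lemma is correct, and it is a genuinely different idea from the paper's proof: since $N(\varepsilon_{2\pt})=1$ gives $\sqrt{\varepsilon_{2\pt}}\in k_2$, your $\eta=\varepsilon_2^{a}\varepsilon_{\pt}^{a+1}\sqrt{\varepsilon_{2\pt}}$ is a unit of $k_2$ with odd exponent on the fundamental unit $\sqrt{\varepsilon_{2\pt}}$, hence $\eta\notin k_2^{*2}$; and $p_1\eta\notin k_2^{*2}$ because $(p_1\eta)=(p_1)$ is a product of four distinct prime ideals ($p_1$ splits completely in $k_2$, as $p_1\equiv 1\pmod 8$ and $\left(\frac{\po}{\pt}\right)=1$). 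So indeed $\eta\notin\KK^{*2}$. The paper, by contrast, never reduces to Theorem \ref{MT3} $(7)(b)$: it reruns Wada's method on the decomposition $k_3$, $k_5$, $k_6$, using the hypothesis on $x$ to pin down the F.S.U.'s of $k_5$ and $k_6$ (killing the exponent of $\sqrt{\varepsilon_{2\po\pt}}$ via $N_{\KK/k_5}$) and the hypothesis on $a'$ to make $\varepsilon_{\po\pt}$ a non-square in $k_4$ (killing the exponent of $\sqrt{\varepsilon_{\po\pt}\varepsilon_{2\po\pt}}$ via $N_{\KK/k_4}$).

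The gap is in the bridge, and it is visible from the fact that your argument proves too much. You use the corollary's hypotheses only to land in case $(7)(b)$ (the hypothesis on $a'$ is never used), and then show $\eta\notin\KK^{*2}$ from the standing hypotheses of case $7$ alone; if this sufficed, it would force $q(\KK)=2^4$ in \emph{every} instance of case $(7)(b)$. That contradicts the paper itself: Table 1 lists $(p_1,p_2)=(41,73)$, of signature $(-1,1,1,1)$ and illustrating condition $\mathrm{C3}$ (i.e.\ case $(7)(b)$), with $q(\KK)=2^5$, and the whole discussion in Section \ref{lastsec} presupposes that both values occur there. The resolution is that the literal statement of $(7)(b)$ cannot carry the weight you put on it. Case $(7)(b)$ splits into two sub-cases: $2\po(x\pm1)$ a square (then $\sqrt{\varepsilon_{2\po\pt}}\in k_6$) or $\po(x\pm1)$ a square (then $\sqrt{\varepsilon_{2\po\pt}}\in k_5$). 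In the second sub-case, Lemma \ref{biquadunutk6} $(4)(b)$ says $\varepsilon_{2\pt}\varepsilon_{2\po\pt}$ is a square in $k_6$, so the norm $N_{\KK/k_6}$ used in the proof of $(7)(b)$ only forces the exponents of $\sqrt{\varepsilon_{2\pt}}$ and of $\sqrt{\varepsilon_{\po\pt}\varepsilon_{2\po\pt}}$ to be congruent, not to vanish; the surviving candidate is then $\varepsilon_2^{a}\varepsilon_{\po}^{b}\varepsilon_{\pt}^{c}\sqrt{\varepsilon_{2\pt}}\sqrt{\varepsilon_{\po\pt}\varepsilon_{2\po\pt}}$, which does not lie in $k_2$ and is untouched by your lemma --- this is exactly where $q(\KK)=2^5$ can occur. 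Your hypotheses do exclude this sub-case, but your proof never makes that verification; and once one does make it (rerunning the eliminations knowing $\varepsilon_{2\po\pt}$ is a square in $k_6$ but not in $k_5$), the norms to $k_5$ and $k_6$ force both exponents to vanish outright, so the corollary follows with no square-detection at all. In short: your lemma answers a question that, under the corollary's hypotheses, never needs to be asked, while the reduction that would make it relevant is precisely the step left unverified.
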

 		
 		\begin{proof}
 			 We shall consider other biquadratic subextensions to compute the unit group of $\KK$.  By Lemmas \ref{lmunit} and \ref{biquadunutk6}, a F.S.U of  $k_3$, $k_5$ and $k_6$ is given by  $\{\varepsilon_{2},\varepsilon_{\po\pt}, \sqrt{\varepsilon_{\po\pt}\varepsilon_{2\po\pt}}\}$,  $\{\varepsilon_{\pt},\varepsilon_{2\po},\varepsilon_{2\po\pt} \}$ and $\{\varepsilon_{\po},\varepsilon_{2\pt}, \sqrt{\varepsilon_{2\po\pt}}\}$   respectively. Thus, we have
 			$E_{k_3}E_{k_5}E_{k_6}=\langle-1,  \varepsilon_{2}, \varepsilon_{\po},\varepsilon_{\pt}, \varepsilon_{2\po},\varepsilon_{2\pt},\varepsilon_{\po\pt},\sqrt{\varepsilon_{2\po\pt}}, \sqrt{\varepsilon_{\po\pt}\varepsilon_{2\po\pt}}   \rangle.$  As $\varepsilon_{ \po\pt}$ is a square in $\KK$, then we consider $\chi \in \KK  $ such that 
 			\begin{eqnarray*}
 				\chi^2=\varepsilon_{2}^a\varepsilon_{\po}^b \varepsilon_{\pt}^c\varepsilon_{2\po}^d\sqrt{\varepsilon_{2\po\pt}}^e\sqrt{\varepsilon_{\po\pt}\varepsilon_{2\po\pt}}^f,
 			\end{eqnarray*}
 			where $a, b, c, d, e$ and $f$ are in $\{0, 1\}$. We have:
 			\begin{eqnarray}
 				N_{\KK/k_3}(\chi^2)&=&
 				\varepsilon_2^{2a}\cdot (-1)^b \cdot(-1)^c\cdot (-1)^{ d}\cdot(-1)^{\alpha_0 e} \cdot \varepsilon_{\po\pt}^{f}\cdot \varepsilon_{2\po\pt}^{f},\nonumber\\
 				N_{\KK/k_5}(\chi^2)&=&
 				(-1)^a\cdot (-1)^b \cdot\varepsilon_{\pt}^{2c} \cdot \varepsilon_{2\po}^{2d}\cdot (-1)^{\beta e} \cdot  \varepsilon_{2\po\pt}^e \cdot (-1)^{\alpha_1 f},\label{rrr} \\
 				N_{\KK/k_6}(\chi^2)&=&
 				(-1)^a\cdot \varepsilon_{\po}^{2b} \cdot(-1)^{c} \cdot (-1)^{d} \cdot (-1)^{\alpha_2 f} \cdot \varepsilon_{  2\po\pt}^{f} \nonumber\\
 				N_{\KK/k_2}(\chi^2)&=& \varepsilon_{2}^{2a}
 				\cdot (-1)^{b} \cdot\varepsilon_{\pt}^{2c}\cdot (-1)^{d} \cdot(-1)^{\alpha_3 f},\nonumber\\
 				N_{\KK/k_4}(\chi^2)&=&
 				(-1)^{a}\cdot \varepsilon_{\po}^{2b}\cdot \varepsilon_{\pt}^{2c} \cdot (-1)^{d} \cdot (-1)^{\alpha_4 f} \cdot \varepsilon_{  \po\pt}^f,\label{eeee}        
 			\end{eqnarray}

 			where $\alpha_i  \in \{0,1\}$. As $\varepsilon_{  2\po\pt}$ is not  a square in $k_5$ and from  \eqref{rrr} we deduce that $e=0$. Thus, we have :

 			\[
 			\left \{
 			\begin{array}{ccc}
 				b+c+ d & \equiv& 0 \pmod 2 \\
 				a+b+\alpha_1 f & \equiv& 0 \pmod 2 \\
 				a+c +d+\alpha_2 f & \equiv& 0 \pmod 2 \\
 				b+d+\alpha_3 f & \equiv& 0 \pmod 2 \\
 				a+d+\alpha_4 f & \equiv& 0 \pmod 2 . 
 			\end{array}
 			\right.
 			\]
 			Since we have $2\po(a'\pm1)$ is {\bf not} a square in $\NN$ (i.e $\varepsilon_{  \po\pt}$ is not a square in $k_4$ according to the proof of Remark \ref{twosquare}), then \eqref{eeee}  implies that $f=0$ and  so the above system gives $a=b=d$ and $c=0$. Therefore, we eliminated all form of $\chi^2$ except the following
 			$\chi^2=(\varepsilon_{2}\varepsilon_{\po}\varepsilon_{2\po})^a$.  
 			As $\varepsilon_{2}\varepsilon_{\po}\varepsilon_{2\po} $ is a square in $\KK$, then
 			$$E_{\KK}=\langle-1,  \varepsilon_{2}, \varepsilon_{\po},\varepsilon_{\pt}, \sqrt{\varepsilon_{2\pt}},\sqrt{\varepsilon_{\po\pt}}, \sqrt{\varepsilon_{2\po\pt}}, \sqrt{\varepsilon_{2}\varepsilon_{\po}\varepsilon_{2\po}}  \rangle.$$
 			Hence $q(\KK)=2^4$ and \eqref{frem} completes the proof.
 		\end{proof}

 \section*{Acknowledgment}	
 	We would like to thank the referee for  the meticulous reading of our paper, the  detailed report and the helpful comments, which have greatly improved  our paper.

 	\end{document}